\definecolor{matheonblue}{RGB}{0,0,50}
\definecolor{matheonlightblue}{RGB}{139,0,0}
\definecolor{diffxblue}{RGB}{0,0,50}
\definecolor{diffxred}{RGB}{139,0,0}
\definecolor{light-gray}{RGB}{200,215,225}
\numberwithin{equation}{section}
\theoremstyle{remark}
\newtheorem{remark}{Remark}[section]
\theoremstyle{definition}
\newtheorem{definition}{Definition}[section]
\newtheorem{proposition}{Proposition}[section]
\newtheorem{lemma}{Lemma}[section]
\newtheorem{corollary}{Corollary}[section]
\newtheorem{theorem}{Theorem}[section]
\newcommand{\f}[1]{\pmb{#1}}
\DeclareMathOperator{\N}{\mathbb{N}}
\DeclareMathOperator{\R}{\mathbb{R}}
\DeclareMathOperator{\C}{\mathcal{C}}
\DeclareMathOperator{\F}{\mathcal{F}}
\DeclareMathOperator{\AC}{\mathcal{AC}}
\DeclareMathOperator{\V}{\f H^1_{0,\sigma}}
\DeclareMathOperator{\Vd}{(\f H^{1}_{0,\sigma})^*}
\DeclareMathOperator{\Ha}{\f L^2_{\sigma}}
\DeclareMathOperator{\Hb}{\f{H}^1_0}
\DeclareMathOperator{\Hc}{\f H^2 \cap \f H^1_0}
\DeclareMathOperator{\Hd}{\f H^{-1}}
\DeclareMathOperator{\He}{\f{H}^1}
\DeclareMathOperator{\Le}{\f{L}^2}
\DeclareMathOperator{\ra}{\rightarrow}
\newcommand{\de}{\mathrm{d}\/}
\DeclareMathOperator{\tr}{tr}
\DeclareMathOperator{\spa}{span}
\newcommand{\drei}{\text{\,\,\multiput(0,-2)(0,2){3}{$\cdot$}\enskip }}
\newcommand{\br}[1]{\frac{\de #1}{\de t}}
\newcommand{\pat}[2]{\frac{\partial #1}{\partial #2}}
\DeclareMathOperator{\di}{\nabla \cdot}
\DeclareMathOperator{\sym}{{sym}}
\DeclareMathOperator{\skw}{skw}
\newcommand{\sy}[1]{(\nabla \f {#1})_{{\sym}}}
\newcommand{\sk}[1]{(\nabla \f {#1})_{\skw}}
\renewcommand{\t}{\partial_t  }
\newcommand{\syn}[1]{(\nabla \f {#1}_n)_{\sym}}
\newcommand{\skn}[1]{(\nabla \f {#1}_n)_{\skw}}
\DeclareMathOperator{\curl}{\nabla \times }
\newcommand{\inte}[1]{\int_{\Omega}\left({ #1}\right) \de \f x}
\newcommand{\intte}[1]{\int_{0}^T{ #1} \de t}
\newcommand{\inttet}[1]{\int_{0}^{t}{ #1} \de s}
\newcommand{\intet}[1]{\int_{\Omega}{ #1} \de \f x}
\newcommand{\ov}[1]{\overline{#1}}
\begin{document}
\title{\Large\bfseries Existence of weak solutions to the Ericksen--Leslie model
\newline
for a general class of free energies\footnote{This work was funded by CRC 901 {\em Control of self-organizing nonlinear systems: Theoretical methods and concepts of application}\/.
 (Project A8)}}
\author{Etienne Emmrich\thanks{%
        Technische Universit\"{a}t Berlin,
        Institut f\"{u}r Mathematik,
        Stra{\ss}e des 17.~Juni 136,
        10623 Berlin, Germany
        \newline{\tt emmrich@math.tu-berlin.de}
        }
\and    Robert Lasarzik\thanks{%
        Technische Universit\"{a}t Berlin,
        Institut f\"{u}r Mathematik,
        Stra{\ss}e des 17.~Juni 136,
        10623 Berlin, Germany
        \newline{\tt lasarzik@math.tu-berlin.de}
        }
}%
\markboth{Weak solutions to the Ericksen--Leslie model}{E.~Emmrich and R.~Lasarzik}
\date{ }
\maketitle
\begin{abstract} A quasistatic model due to Ericksen and Leslie describing incompressible liquid crystals is studied for a general class of free energies. Global existence of weak solutions is proven via a Galerkin approximation with eigenfunctions of a strongly elliptic operator.
A novelty is that the principal part of the differential operator appearing in the director equation can be nonlinear.
\newline
\newline
{\em Keywords:
Liquid crystal,
Ericksen--Leslie equation,
Existence,
Weak solution,
Galerkin approximation
}
\newline
{\em MSC (2010): 35Q35, 35K52, 76A15
}
\end{abstract}
\setcounter{tocdepth}{2}
\tableofcontents
\section{Introduction}\label{sec:int}
Liquid crystals are fluids that exhibit anisotropic (directional depending) properties.
After several reports on such materials in the second half of the 19th century
 (see~Heinz\cite{heinz}, Virchow~\cite{virchow}, and Reinitzer~\cite{reinitzer}), they were first named and characterised as a
  state of matter in between fluids and solids by Otto Lehmann (see~\cite{lehmann}). Only in the
   second half of the last century liquid crystals came into the focus for many
    applications~(see~Palffy-Muhoray~\cite{peter}), where the liquid crystal display
     (see~{Heilmeier} et\,al.~\cite{heilmeier}) is only the most prominent one.

This article is mainly concerned with nematic liquid crystals, which is a special mesophase of
 liquid crystals. In this phase, the rod-like molecules are randomly distributed in space as in
  isotropic liquids. These molecules tend to align in a common direction, even though they are not
   in a rigid lattice structure as in solids (see Figure~\ref{fig}).
\begin{figure}[ht]
\begin{center}
\begin{pspicture}(0,0.3)(10,3.5)
\rput(1.5,3.5){%
\begin{minipage}{3cm}%
\begin{center}%
\textbf{isotropic fluid}
\end{center}
\end{minipage}}
\pspolygon[linecolor=matheonblue](0,0)(0,3)(3,3)(3,0)
\rput{90}(.4,.2){\psellipse*[linecolor=matheonlightblue](0,0)(.1,.25)}
\rput{45}(1,.2){\psellipse*[linecolor=matheonlightblue](0,0)(.1,.25)}
\rput{-33}(.2,.6){\psellipse*[linecolor=matheonlightblue](0,0)(.1,.25)}
\rput{163}(.5,.7){\psellipse*[linecolor=matheonlightblue](0,0)(.1,.25)}
\rput{50}(1.4,0.4){\psellipse*[linecolor=matheonlightblue](0,0)(.1,.25)}
\rput{-60}(2.3,0.2){\psellipse*[linecolor=matheonlightblue](0,0)(.1,.25)}
\rput{-45}(1,0.7){\psellipse*[linecolor=matheonlightblue](0,0)(.1,.25)}
\rput{-90}(1.8,0.7){\psellipse*[linecolor=matheonlightblue](0,0)(.1,.25)}
\rput{35}(2.7,0.6){\psellipse*[linecolor=matheonlightblue](0,0)(.1,.25)}
\rput{-20}(0.3,1.2){\psellipse*[linecolor=matheonlightblue](0,0)(.1,.25)}
\rput{60}(0.8,1.3){\psellipse*[linecolor=matheonlightblue](0,0)(.1,.25)}
\rput{45}(.215,1.7){\psellipse*[linecolor=matheonlightblue](0,0)(.1,.25)}
\rput{-80}(0.7,1.8){\psellipse*[linecolor=matheonlightblue](0,0)(.1,.25)}
\rput{-35}(1.4,1.4){\psellipse*[linecolor=matheonlightblue](0,0)(.1,.25)}
\rput{-70}(1.6,1.1){\psellipse*[linecolor=matheonlightblue](0,0)(.1,.25)}
\rput{-45}(2.4,1){\psellipse*[linecolor=matheonlightblue](0,0)(.1,.25)}
\rput{0}(2.8,1.3){\psellipse*[linecolor=matheonlightblue](0,0)(.1,.25)}
\rput{90}(1.5,1.9){\psellipse*[linecolor=matheonlightblue](0,0)(.1,.25)}
\rput{40}(2.4,1.8){\psellipse*[linecolor=matheonlightblue](0,0)(.1,.25)}
\rput{30}(0.3,2.6){\psellipse*[linecolor=matheonlightblue](0,0)(.1,.25)}
\rput{60}(0.8,1.3){\psellipse*[linecolor=matheonlightblue](0,0)(.1,.25)}
\rput{-5}(0.8,2.5){\psellipse*[linecolor=matheonlightblue](0,0)(.1,.25)}
\rput{-40}(1.4,2.5){\psellipse*[linecolor=matheonlightblue](0,0)(.1,.25)}
\rput{0}(2.0,2.6){\psellipse*[linecolor=matheonlightblue](0,0)(.1,.25)}
\rput{-60}(2.6,2.7){\psellipse*[linecolor=matheonlightblue](0,0)(.1,.25)}
\rput{-85}(2.7,2.2){\psellipse*[linecolor=matheonlightblue](0,0)(.1,.25)}
\rput(5,3.5){\begin{minipage}{3cm}\begin{center}%
\textbf{{nematic liquid crystal}}
\end{center}
\end{minipage}}

\pspolygon[linecolor=matheonblue](3.5,0)(3.5,3.0)(6.5,3)(6.5,0)
\rput{10}(3.7,.3){\psellipse*[linecolor=matheonlightblue](0,0)(.1,.25)}
\rput{9}(4.0,.4){\psellipse*[linecolor=matheonlightblue](0,0)(.1,.25)}
\rput{-5}(4.4,.6){\psellipse*[linecolor=matheonlightblue](0,0)(.1,.25)}
\rput{0}(4.6,.3){\psellipse*[linecolor=matheonlightblue](0,0)(.1,.25)}
\rput{3}(5.0,.5){\psellipse*[linecolor=matheonlightblue](0,0)(.1,.25)}
\rput{-2}(5.4,.3){\psellipse*[linecolor=matheonlightblue](0,0)(.1,.25)}
\rput{7}(5.7,.7){\psellipse*[linecolor=matheonlightblue](0,0)(.1,.25)}
\rput{0}(6.2,.6){\psellipse*[linecolor=matheonlightblue](0,0)(.1,.25)}
\rput{-10}(3.6,.9){\psellipse*[linecolor=matheonlightblue](0,0)(.1,.25)}
\rput{6}(4.2,1.1){\psellipse*[linecolor=matheonlightblue](0,0)(.1,.25)}
\rput{3}(4.65,.8){\psellipse*[linecolor=matheonlightblue](0,0)(.1,.25)}
\rput{-4}(4.85,1.2){\psellipse*[linecolor=matheonlightblue](0,0)(.1,.25)}
\rput{9}(5.1,1.0){\psellipse*[linecolor=matheonlightblue](0,0)(.1,.25)}
\rput{0}(5.4,1.2){\psellipse*[linecolor=matheonlightblue](0,0)(.1,.25)}
\rput{2}(5.65,1.4){\psellipse*[linecolor=matheonlightblue](0,0)(.1,.25)}
\rput{-5}(5.9,1.0){\psellipse*[linecolor=matheonlightblue](0,0)(.1,.25)}
\rput{6}(6.2,1.3){\psellipse*[linecolor=matheonlightblue](0,0)(.1,.25)}
\rput{-8}(6.3,2.0){\psellipse*[linecolor=matheonlightblue](0,0)(.1,.25)}
\rput{0}(3.7,1.6){\psellipse*[linecolor=matheonlightblue](0,0)(.1,.25)}
\rput{-20}(4,1.4){\psellipse*[linecolor=matheonlightblue](0,0)(.1,.25)}
\rput{-10}(4.5,1.3){\psellipse*[linecolor=matheonlightblue](0,0)(.1,.25)}
\rput{20}(3.8,2.2){\psellipse*[linecolor=matheonlightblue](0,0)(.1,.25)}
\rput{10}(4.5,1.9){\psellipse*[linecolor=matheonlightblue](0,0)(.1,.25)}
\rput{-5}(4.9,1.8){\psellipse*[linecolor=matheonlightblue](0,0)(.1,.25)}
\rput{0}(5.4,1.9){\psellipse*[linecolor=matheonlightblue](0,0)(.1,.25)}
\rput{-10}(5.9,1.6){\psellipse*[linecolor=matheonlightblue](0,0)(.1,.25)}
\rput{-20}(3.8,2.7){\psellipse*[linecolor=matheonlightblue](0,0)(.1,.25)}
\rput{-10}(4.1,2.1){\psellipse*[linecolor=matheonlightblue](0,0)(.1,.25)}
\rput{10}(4.3,2.7){\psellipse*[linecolor=matheonlightblue](0,0)(.1,.25)}
\rput{-5}(4.7,2.3){\psellipse*[linecolor=matheonlightblue](0,0)(.1,.25)}
\rput{15}(5.0,2.6){\psellipse*[linecolor=matheonlightblue](0,0)(.1,.25)}
\rput{-10}(5.15,2.1){\psellipse*[linecolor=matheonlightblue](0,0)(.1,.25)}
\rput{5}(5.4,2.6){\psellipse*[linecolor=matheonlightblue](0,0)(.1,.25)}
\rput{-7}(5.75,2.75){\psellipse*[linecolor=matheonlightblue](0,0)(.1,.25)}
\rput{-15}(5.7,2.1){\psellipse*[linecolor=matheonlightblue](0,0)(.1,.25)}
\rput{15}(6.3,2.6){\psellipse*[linecolor=matheonlightblue](0,0)(.1,.25)}

\rput(8.5,3.5){%
\begin{minipage}{3cm}%
\begin{center}%
{\textbf{crystal}}
\end{center}
\end{minipage}
}
\pspolygon[linecolor=matheonblue](7,0)(7,3)(10,3)(10,0)
\multido{\rr=0.3+0.6}{5}{
\multido{\i=0+1,\r=7.15+0.3}{10}{%
\rput{0}(\r,\rr){\psellipse*[linecolor=matheonlightblue](0,0)(.1,.25)}%
}%
}
\end{pspicture}
\end{center}\caption{%
Ordering of the molecules in a nematic liquid crystal in comparison to isotropic liquids an crystals
\label{fig}}
\end{figure}
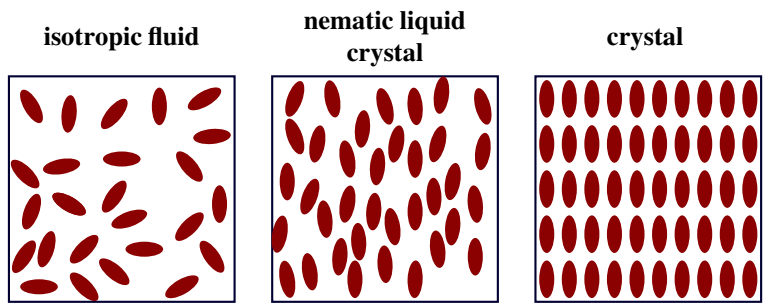
The most common model describes the anisotropic properties, i.\,e.,~the preferred direction of the
 alignment, by a unit vector field~$\f d$ and the fluid flow by a velocity field~$\f v$.
This model is due  to Oseen~\cite{oseen} and Frank~\cite{frank} in the stationary case and Ericksen~\cite{Erick1,Erick2} and Leslie~\cite{leslie} in the non-stationary case.
This model is indeed quite successful and has also been confirmed by experiments (see  Beris and Edwards~\cite[\S 11.1 page 463]{beris}).


In this article, we prove existence of weak solutions to the Ericksen--Leslie model under rather weak assumptions on the free energy.

\subsection{Review of known results}
 Ericksen~\cite{Erick1,Erick2} and Leslie~\cite{leslie} introduced the following system, which consists of an  equation for the
evolution of the anisotropic parameter $\f d$ coupled with an incompressible Navier--Stokes equation for the velocity  $\f v$ and the pressure $p$
with a certain additional
stress tensor:
\begin{subequations}
\label{ELeq}
\begin{align}
\rho  \br{\f v} + \nabla p + \di \left ( \nabla \f d ^T \pat{F}{\nabla \f d }  \right ) - \di \f T^L& =\rho \f g \, , \label{EL2}\\
\rho_1 \frac{\de^2 \f d}{\de t^2}  - \di \left ( \pat{F}{\nabla \f d} \right )+  \pat{F}{\f d}  - \lambda_1 \left ( \frac{\de \f d}{\de t}- \sk{v}\f d  \right ) - \lambda_2 \sy{v} \f d &= \rho_1\f f \, ,\label{EL1}\\
\di \f v &=0\, ,\label{EL3}\\
|\f d|^2 &=1\,.\label{EL4}
\end{align}%
\end{subequations}
Here $\br{}:= \partial_t+(\f v \cdot \nabla ) $ denotes the material derivative. The free energy density is denoted by $F=F(\f d, \nabla \f d)$.
Moreover, $\f f$ and $\f g$ represent external forces
acting on the director and on the velocity field, respectively. Finally, $\rho$ denotes the mass density of the fluid whereas $\rho_1 = \rho \bar{r}^2$ with $\bar{r}$ being of the size of the length of the molecules.

The dissipative part
of the stress tensor, also called Leslie stress, is given by
\begin{align}
\begin{split}
\f T^L:= &\mu_1 (\f d \cdot \sy v \f d )\f d\otimes \f  d + \mu_2 \f e \otimes \f d  + \mu_3  \f d \otimes \f e \\&+ \mu_4 \sy{v} + \mu_5
\sy v \f d \otimes\f d+ \mu_6 \f d \otimes \sy v  \f d .
\end{split}\label{stressE}
\end{align}
Here we use the abbreviation $\f e:=  \frac{\de \f d}{\de t}- \sk v \f d$. Note that $\f e$ is an invariant of the system with respect to translations and rotations (see Leslie~\cite{leslie}).
The constants appearing in  \eqref{ELeq} and  \eqref{stressE} are
related to each other in the following way (see Leslie~\cite{leslie}):
\begin{subequations}
\begin{align}\label{symcon}
\lambda_1 = \mu_2-\mu_3, \qquad \lambda_2 = \mu_5-\mu_6\,.
\end{align}

Additionally, Parodi's relation
\begin{align}
 \lambda_2 + \mu_2 + \mu _3 = 0
\label{parodi}
\end{align}%
\end{subequations}
is often assumed to hold~(see Lin and Liu~\cite{linliu3} or~Parodi~\cite{parodi}), but will not be assumed to hold for the proof of our existence result.  Parodi's relation follows from Onsager's reciprocal relation and
can be employed in order to obtain the stresses as derivatives of a dissipation distance (see~ Wu, Xu and Liu~\cite{hiu}).


The first mathematical analysis of the Ericksen--Leslie model is due to
Lin and Liu~\cite{linliu1} for the simplified model
\begin{align}
\begin{split}
\t \f v+ ( \f v \cdot \nabla )\f v + \nabla p - \mu_4 \Delta \f v
= & -  \di ( \nabla \f d ^T \nabla \f d) \,,\\
\t \f d + ( \f v \cdot \nabla )\f d  -  \Delta \f d + \frac{1}{\varepsilon}(| \f d |^2-1)\f d = &  0\,,\\  \di \f v =&0\,.
\end{split}\label{simplified}
\end{align}
The norm restriction~\eqref{EL4} is incorporated by a standard relaxation technique adding a double-well potential to the free energy.
The free energy potential for this model is given by
\begin{align}
F_\varepsilon ( \f d , \nabla \f d) = \frac{1}{2} |\nabla \f d |^2 + \frac{1}{4\varepsilon^2} ( | \f d |^2 -1) ^2 , \quad \varepsilon >0 \, .\label{penal}
\end{align}

Indeed, Lin and Liu~\cite{linliu1} prove global existence of weak solutions as well as local existence of strong solutions to~\eqref{simplified}.
In~\cite{linliu3}, the authors  generalise these results to the system~\eqref{EL1}-\eqref{EL3} equipped with the energy~\eqref{penal} and under the additional assumption $\rho_1=\lambda_2=0$.
With this last simplification ($\lambda_2=0$), translational forces of the fluid onto the director are neglected. This enables the authors to prove a weak maximum principle
which is  essential for the analysis in \cite{linliu1} and~\cite{linliu3}.

The full Ericksen--Leslie model \eqref{EL2}-\eqref{EL3} (with $\rho_1=0$) equipped with the Dirichlet energy and double-well potential~\eqref{penal} was considered by Wu, Xu and Liu~\cite{hiu}. They show existence of weak solutions under the condition that $\mu_4$ is large enough.
Cavaterra, Rocca and Wu~\cite{allgemein} prove the existence of weak solutions for the same system when $\mu_4$ is only assumed to be positive. They add a regularising $p$-Laplacian to the velocity equation.
Feireisl et\,al.~\cite{nonisothermal} generalised the Ericksen--Leslie model to account for nonisothermal effects by considering additionally to system~\eqref{ELeq} an energy balance and an entropy inequality. They show global existence of weak solutions.

There are also several articles studying the local well-posedness of the Ericksen--Leslie model.
Wang, Zhang and Zhang~\cite{recent} show local existence of strong solutions to system~\eqref{ELeq} equipped with the Dirichlet energy $F(\f d , \nabla \f d ) = | \nabla \f d |^2$, where equation~\eqref{EL1} is replaced by
\begin{align}
 \f d \times \left (- \di \left ( \pat{F}{\nabla \f d} \right )+  \pat{F}{\f d}  - \lambda_1 \left ( \frac{\de \f d}{\de t}- \sk{v}\f d  \right ) - \lambda_2 \sy{v} \f d \right )= 0 \label{EL5}\,.
\end{align}
Taking the equation for the director in the cross product with the director itself, assures that the norm restriction~\eqref{EL4} is satisfied for the whole evolution. This does not need to be the case for the general Ericksen--Leslie model~\eqref{EL2}-\eqref{EL3} with energy~\eqref{penal} and $\rho_1=0$.
Another approach is due to Pr\"u\ss{} and co-authors introducing a
thermodynamically consistent system~\cite{Pruessnematic} and proving
local existence and stability results via a semigroup approach for quasilinear equations (see Hieber and Pr\"u\ss{}~\cite{Pruessnematic}  and Hieber et\,al.~\cite{Pruess2}).
The simplified model~\eqref{simplified} with the director equation taken in the cross-product with $\f d$ and equipped with the so-called Oseen--Frank energy~\eqref{oseen} below was considered by Hong, Li, and Xin~\cite{localin3d}, they managed to prove the local existence of strong solutions.

For a broader overview of results concerning the analysis of liquid crystal models, we refer to Lin and Liu~\cite{linliu4} and Lin and Wang~\cite{recentDevelopments}.

\subsection{Free energy potential}

The free energy potential $F$ models
the inner forces and thus the influence of the molecules onto each other as well as on the velocity field.
The focus of the present work is to generalise the global existence theory available for the Ericksen--Leslie model to a larger class of free energies, including also potentials associated to nonlinear principal parts in the director equation.

To model distortions in the material, already Leslie (see~\cite{leslie}) suggested
to consider  the free energy potential due to Oseen~\cite{oseen} and Frank~\cite{frank},  called Oseen--Frank
energy,
\begin{align}\label{oseen}
F:= k_1 (\di \f d )^2 + k_2 ( \f d \cdot (\curl \f d))^2 + k_3 | \f d
\times (\curl \f d) |^2 + \alpha \left ( \tr (\nabla \f  d^2 ) - (\di \f d )^2\right ).
\end{align}
Note that $\f d\times (\curl \f  d ) = - 2\sk  d \f  d  $. The last term of~\eqref{oseen} can be expressed as the divergence of a vector field, $\di(\nabla \f d \f d - ( \di \f d ) \f d) =\tr (\nabla \f  d ^2) - (\di \f d )^2$, and with Gau\ss{}' formula it is already prescribed by the boundary data.
With the one-constant approximation $ k_1=k_2=k_3=\alpha $ and employing $|\f d| = 1$, one obtains the
Dirichlet energy $F(\f d , \nabla \f d)= k_1 | \nabla \f d |^2 $. This gives rise to study the energy potential~\eqref{penal}.

In the physics literature, there are several choices of free energy potentials, which are not covered by the available mathematical existence theory of generalised solutions yet.
Possible electromagnetic field effects could be taken into account by considering (see de Gennes~\cite{gennes})
\begin{equation}
F_H (\f d , \nabla \f d) := F( \f d , \nabla \f d) - \chi_\bot |\f H|^2  - (\chi_\|-\chi_\bot) ( \f d \cdot \f H )^2\,.\label{FH}
\end{equation}
Here $\f H$ denotes the electromagnetic field and $\chi_\bot$ and $\chi_\|$ are the magnetic susceptibility constants for a magnetic field parallel and perpendicular to the director, respectively.
Already Leslie suggests to incorporate two additional degrees of freedom into the system, which can be achieved by considering a free energy potential of the form
\begin{align}
F_A( \f d , \nabla \f d) : =  F( \f d, \nabla \f d) - \f d\cdot \nabla\f d \f b + \frac{\bar{b}}{2}|\f d|^2 \, \label{FA}
\end{align}
with $ \f b \in \R^3 $ and $\overline{b}\in\R$.
Furthermore, the case of the following simplified Oseen--Frank energy is not fully treated in the literature yet.
For $k_2=k_3$ and under the assumption $| \f d |=1$, the Oseen--Frank energy can be transformed to (see Section~\ref{sec:ex})
\begin{align*}
F(\f d ,\nabla \f d ) = k_1 ( \di \f d )^2 +k_2 | \curl \f d|^2 + \frac{1}{\varepsilon}(| \f d|^2-1)^2\,
\end{align*}
with $k_1,k_2>0$.

It is also possible to prove the existence of weak solutions to  the Ericksen--Leslie system equipped with a scaled version of the Oseen--Frank energy.
This energy is given by
\begin{align}\label{scaledO}
\begin{split}
F(\f d , \nabla \f d ) :  = {}&\frac{k_1}{2} (\di \f d )^2 +\frac{k_2}{2}| \curl \f d |^2 \\ &+  ( 1+ | \nabla \f d|^2)^{-s}( 1+ | \f d|^2)^{-1} \left ( \frac{k_3}{2} ( \f d \cdot \curl \f d) ^2 + \frac{ k_4}{2} | \f d \times \curl \f d |^2 \right ) \,
\end{split}
\end{align}
with $k_1,k_2>0$ and $k_3,k_4$ sufficiently small as well as $s>1 /6 $.
This is a modification of the Oseen--Frank energy taking into account the non-quadratic terms and thus anisotropic, director-depending properties of the material.
The non-quadratic parts of the free energy are scaled appropriately and the energy has an anisotropic character comparable to the Oseen--Frank energy.

We provide the proof of existence of weak solutions to the Ericksen--Leslie equation~\eqref{eq:strong} equipped with each of the above physical relevant energies, except for the general Oseen--Frank energy~\eqref{oseen}.
For the existence of measure-valued solutions to the problem with general Oseen--Frank energy, we refer to~\cite{unseremasse}.

\subsection{Outline of the paper}
In the present paper, we study the original Ericksen--Leslie system~\eqref{ELeq} in three dimensions with $\rho_1 = 0$ (macroscopic theory)
together with a relaxation by a  double-well potential.
We focus in particular on the free energy and introduce a class of free energy functions that allow us to show global existence of weak solutions.
The class of free energies we consider is of the type
\begin{equation}
F(\f d, \nabla \f d) = \frac{1}{2} \, \nabla \f d : \f \Lambda : \nabla \f d + \tilde{F}(\f d, \nabla \f d) \, ,\label{tildeF}
\end{equation}
where $\f \Lambda$ denotes a tensor of fourth order and $\tilde{F}$ collects terms that are of lower order with respect to $\nabla \f d$.
This class of free energies includes, for instance, all free energy potentials mentioned above except  the general Oseen--Frank energy (see Section~\ref{sec:ex}).

In order to ensure dissipativity of the system, we require that (see also the equivalent formulation (\ref{con}) below)

\begin{gather*}
\mu_1 > 0 \, , \; \mu_3 > \mu_2 \, , \; \mu_4 > 0 \, , \;
(\mu_3 - \mu_2)(\mu_6 + \mu_5) >(\mu_3 + \mu_2)(\mu_6 - \mu_5) \, , \;
\\
4 (\mu_3 - \mu_2)(\mu_6 + \mu_5) > \left( (\mu_3 + \mu_2) + (\mu_6 - \mu_5) \right)^2 .
\end{gather*}

For proving existence of a solution,
we employ a Galerkin method to approximate both equations (\ref{eq:velo}), (\ref{eq:dir}) simultaneously. This is in contrast to previous work such as Wu, Xu and Liu~\cite{hiu} or
Cavaterra, Rocca and Wu~\cite{allgemein} where the authors combine a Schauder fixed point argument with a Galerkin approximation of only the Navier--Stokes-like equation and solving the director equation in each step exactly.
This method relies on existence and continuity of the  solution operator to equation~\eqref{EL1}. To be able to use such a property previous work had to invoke additional regularity in the approximation of the velocity field either by assuming $\mu_4$ to be sufficiently large~\cite{hiu} or by introducing an additional regularisation~\cite{allgemein}.
Due to the generalisation with respect to the free energy considered in the present paper, the continuity of the solution operator to equation~\eqref{EL1} is no longer at hand.
Additionally, a simultaneous discretisation is more suitable for a numerical approximation.

The paper is organised as follows: In Section~\ref{pre}, we collect some notation and important inequalities. Section~\ref{sec:model} then contains the main result together with a detailed description of the class of free energies. The proof is then carried out in Section~\ref{Sec:Proof}.
In Section~\ref{sec:more}, we generalise the result to possible nonlinear principal parts and comment on the adaptations needed in the proof.
 Finally, some examples are discussed in Section~\ref{sec:ex}.
\section{Preliminaries}\label{pre}
\subsection{Notation}\label{sec:not}
Vectors of $\R^3$ are denoted by bold small Latin letters. Matrices of $\R^{3\times 3}$ are denoted by bold capital Latin letters. We also need tensors of third and fourth order, which are denoted by capital and bold capital Greek letters, respectively.
Moreover, numbers are denoted be small Latin or Greek letters, and capital Latin letters are reserved for potentials.

The Euclidean inner product in $\R^3$ is denoted by a dot,
$ \f a \cdot \f b : = \f a ^T \f b = \sum_{i=1}^3 \f a_i \f b_i$  for $ \f a, \f b \in \R^3$.
The Frobenius inner product in the space $\R^{3\times 3}$ of matrices is denoted by a double dot, $ \f A: \f B:= \tr ( \f A^T \f B)= \sum_{i,j=1}^3 \f A_{ij} \f B_{ij}$ for $\f A , \f B \in \R^{3\times 3}$.
We also employ the corresponding Euclidean norm with $| \f a|^2 = \f a \cdot \f a$ for $ \f a \in \R^3$ and Frobenius norm with $ |\f A|^2=\f A:\f A$ for $\f A \in \R^{3\times 3}$.
In addition, we define products of tensors of different order in a similar fashion: The product of a third with a second order tensor is defined by
\begin{align*}
\Gamma : \f A := \left [ \sum_{j,k=1}^3 \Gamma_{ijk}\f A_{jk}\right ]_{i=1}^3\, ,  \quad  \Gamma\in \R^{3\times 3\times 3} , \, \f A \in \R^{3\times 3} .
\end{align*}
The product of a fourth order with a second order tensor is defined by
\begin{align*}
\f \Gamma : \f A : =\left [ \sum_{k,l=1} ^3 \f \Gamma_{ijkl} \f A_{kl}\right ]_{i,j=1}^3, \quad    \f \Gamma \in \R^{3\times 3 \times 3 \times 3},  \,  \f A \in \R^{3\times 3 }  .
\end{align*}
The product of a fourth order with a third order tensor is defined by
\begin{align*}
\f \Gamma \drei \Gamma : =\left [ \sum_{j,k,l=1} ^3 \f \Gamma_{ijkl} \Gamma_{jkl}\right ]_{i=1}^3, \quad    \f \Gamma \in \R^{3\times 3 \times 3 \times 3},  \,  \Gamma \in \R^{3\times 3 \times 3}  .
\end{align*}
The standard matrix and matrix-vector multiplication, however, is written without an extra sign for bre\-vi\-ty,
$$\f A \f B =\left [ \sum _{j=1}^3 \f A_{ij}\f B_{jk} \right ]_{i,k=1}^3 \,, \quad  \f A \f a = \left [ \sum _{j=1}^3 \f A_{ij}\f a_j \right ]_{i=1}^3\, , \quad  \f A \in \R^{3\times 3},\,\f B \in \R^{3\times3} ,\, \f a \in \R^3 .$$
The outer product is denoted by
$\f a \otimes \f b = \f a \f b^T = \left [ \f a_i  \f b_j\right ]_{i,j=1}^3$ for $\f a , \f b \in \R^3$. Note that
$\tr (\f a \otimes \f b  ) = \f a\cdot \f b$.
The symmetric and skew-symmetric part of a matrix are denoted by $\f A_{\sym}: = \frac{1}{2} (\f A + \f A^T)$ and
$\f A _{\skw} : = \frac{1}{2}( \f A - \f A^T)$ for $\f A \in \R^{3\times  3}$, respectively. For the Frobenius product of two matrices $\f A, \f B \in \R^{3\times 3 }$, we find that
 \begin{align*}
 \f A: \f B = \f A : \f B_{\sym}  \text{ if } \f A^T= \f A\, ,\quad
  \f A: \f B = \f A : \f B_{\skw} \text{ if } \f A^T= -\f A\, .
 \end{align*}
Moreover, there holds $\f A^T\f B : \f C = \f B : \f A \f C$ for
$\f A, \f B, \f C \in \R^{3\times 3}$ as well as
$ \f a\otimes \f b : \f A = \f a \cdot \f A \f b$ for
$\f a, \f b \in \R^3$, $\f A \in \R^{3\times 3 }$. This implies
$ \f a \otimes \f a : \f A = \f a \cdot \f A \f a =  \f a \cdot \f A_{\sym} \f a$.

We use  the Nabla symbol $\nabla $  for real-valued functions $f : \R^3 \to \R$, vector-valued functions $ \f f : \R^3 \to \R^3$ as well as matrix-valued functions $\f A : \R^3 \to \R^{3\times 3}$ denoting
\begin{align*}
\nabla f := \left [ \pat{f}{\f x_i} \right ] _{i=1}^3\, ,\quad
\nabla \f f  := \left [ \pat{\f f _i}{ \f x_j} \right ] _{i,j=1}^3 \, ,\quad
\nabla \f A  := \left [ \pat{\f A _{ij}}{ \f x_k} \right ] _{i,j,k=1}^3\, .
\end{align*}
For brevity, we write $ \nabla \f f^T $ instead of $ ( \nabla \f f)^T$. The divergence of a vector-valued and a matrix-valued function is defined by
\begin{align*}
\di \f f := \sum_{i=1}^3 \pat{\f f _i}{\f x_i} = \tr ( \nabla \f f)\, , \quad  \di \f A := \left [\sum_{j=1}^3 \pat{\f A_{ij}}{\f x_j}\right] _{i=1}^3\, .
\end{align*}
The symmetric and skew-symmetric part of the gradient of a vector-valued function is denoted by $ \sy f$ and $ \sk f$, respectively.
Note that $( \f v\cdot \nabla ) \f f = ( \nabla \f f) \f v = \nabla \f f\, \f v $.

Throughout this paper, let $\Omega \subset \R^3$ be a bounded domain of class $\C^{2}$.
We rely on the usual notation for spaces of continuous functions, Lebesgue and Sobolev spaces. Spaces of vector-valued functions are  emphasised by bold letters, for example
$
\f L^p(\Omega) := L^p(\Omega; \R^3)$,
$\f W^{k,p}(\Omega) := W^{k,p}(\Omega; \R^3)$.
If it is clear from the context, we also use this bold notation for spaces of matrix-valued functions.
For brevity, we often omit calling the domain $\Omega$.
The standard inner product in $L^2 ( \Omega; \R^3)$ is just denoted by
$ (\cdot \, , \cdot )$ and in $L^2 ( \Omega ; \R^{3\times 3 })$
by $(\cdot : \cdot )$.
In view of the Poincar\'e{}--Friedrichs inequality, we equip $\Hb$ with the norm $\| \cdot \|_{\Hb}:= \| \nabla \cdot \|_{\Le} $.
We often need to work with the space $\Hc$ that we equip with the norm $\|\cdot\|_{\Hc} := \| \Delta \cdot\|_{\Le}$, which is equivalent to the standard $\f H^2$-norm (see Gilbarg and Trudinger~\cite[Thm.~9.15, Lemma~9.17]{gil} or Ladyzhenskaya~\cite[p.~18f.]{LadyFluid}).

The space of smooth solenoidal functions with compact support is denoted by $\mathcal{C}_{c,\sigma}^\infty(\Omega;\R^3)$. By $\f L^p_{\sigma}( \Omega) $, $\V(\Omega)$,  and $ \f W^{1,p}_{0,\sigma}( \Omega)$, we denote the closure of $\mathcal{C}_{c,\sigma}^\infty(\Omega;\R^3)$ with respect to the norm of $\f L^p(\Omega) $, $ \f H^1( \Omega) $, and $ \f W^{1,p}(\Omega)$, respectively.

The dual space of a Banach space $V$ is always denoted by $ V^*$ and equipped with the standard norm; the duality pairing is denoted by $\langle\cdot, \cdot \rangle$. The duality pairing between $\f L^p(\Omega)$ and $\f L^q(\Omega)$ (with $1/p+1/q=1$), however, is denoted by $(\cdot , \cdot )$ or $( \cdot : \cdot )$. The dual of $\f H^1_0$ is denoted by $\f H^{-1}$.

The Banach space of linear bounded operators mapping a Banach space $V$ into itself is denoted by $\mathcal{L}(V)$ and equipped with the usual norm.
For a given Banach space $ V$, Bochner--Lebesgue spaces are denoted, as usual,  by $ L^p(0,T; V)$. Moreover,  $W^{1,p}(0,T; V)$ denotes the Banach space of abstract functions in $ L^p(0,T; V)$ whose weak time derivative exists and is again in $ L^p(0,T; V)$ (see also
Diestel and Uhl~\cite[Section~II.2]{diestel} or
Roub\'i\v{c}ek~\cite[Section~1.5]{roubicek} for more details).
We often omit the time interval $(0,T)$ and the domain $\Omega$ and just write, e.g., $L^p(\f W^{k,p})$ for brevity.
By $ \AC ( [0,T]; V)$, $\C([0,T]; V) $, and $ \C_w([0,T]; V)$, we denote the spaces of abstract functions mapping $[0,T]$ into $V$ that are absolutely continuous, continuous, and continuous with respect to the weak topology in $V$, respectively.

Finally, by $c>0$, we denote a generic positive constant.

\subsection{A few interpolation inequalities}
We commence with a Gagliardo--Nirenberg-type result for time-dependent functions.
\begin{lemma}
\label{lem:nir}
Let $p\in [2,6]$, $q\in [6,\infty]$ and let $r,s \in [1 , \infty]$ and
$\theta_1 , \theta_2 \in [0,2]$ such that
\begin{align}
  \frac 1 p=\frac 1 2-\frac{\theta_1}{3r} \, , \quad
 \frac 1q = \frac 16 -\frac{\theta_2}{3s} \, .
\label{bedp}
\end{align}
Then there exists a constant $c>0$ such that the inequalities
\begin{align*}
\|\nabla  \f d\|_{L^r(\f L^{p})}^r  \leq c \|\f d\|_{L^2(\Hc)}^{\theta_1} \|\f d\|_{L^\infty(\Hb)}^{r-\theta_1} \, ,
\quad
\| \f d\|_{L^s(\f L^q)}^s \leq c \|\f d\|_{L^2(\Hc)}^{\theta_2} \|\f d\|_{L^\infty(\Hb)}^{s-\theta_2}
\end{align*}
hold for any $\f d \in L^\infty(0,T;\f H^1_0) \cap L^2(0,T;\Hc)$.
\end{lemma}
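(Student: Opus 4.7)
\smallskip

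\noindent\textbf{Proof plan.} My strategy is to decouple the estimate into a purely spatial Gagliardo--Nirenberg-type inequality (pointwise in time) and a H\"older/absorption step in time. Both claimed inequalities have the same structure, so I would handle them in parallel.

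\smallskip

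\noindent\emph{Step 1: Spatial interpolation.} Since $\Omega\subset\R^3$ is a bounded $\C^2$-domain, the Sobolev embeddings $\He\hookrightarrow \f L^6$ and $\Hc\hookrightarrow \f L^\infty$ as well as $\Hc\hookrightarrow \f W^{1,6}$ hold, together with the Poincar\'e--Friedrichs inequality and the equivalence $\|\cdot\|_{\Hc}\sim\|\Delta\cdot\|_{\Le}$ on $\Hc$. For the first inequality I would interpolate $\|\nabla\f d\|_{\f L^p}$ between $\|\nabla\f d\|_{\Le}$ and $\|\nabla\f d\|_{\f L^6}$, which yields
\begin{equation*}
\|\nabla\f d\|_{\f L^p}\leq c\,\|\f d\|_{\He}^{1-\alpha}\|\f d\|_{\Hc}^{\alpha},\qquad \alpha=\tfrac{3}{2}\bigl(1-\tfrac{2}{p}\bigr),
\end{equation*}
for any $p\in[2,6]$. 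For the second, I would interpolate $\|\f d\|_{\f L^q}$ between $\|\f d\|_{\f L^6}$ and $\|\f d\|_{\f L^\infty}$, giving
\begin{equation*}
\|\f d\|_{\f L^q}\leq c\,\|\f d\|_{\He}^{1-\beta}\|\f d\|_{\Hc}^{\beta},\qquad \beta=\tfrac{1}{2}\bigl(1-\tfrac{6}{q}\bigr),
\end{equation*}
for any $q\in[6,\infty]$.

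\smallskip

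\noindent\emph{Step 2: Matching the exponents.} Raising the two pointwise bounds to the $r$-th and $s$-th power, respectively, a direct check shows $r\alpha=\theta_1$ and $s\beta=\theta_2$ precisely under the scaling conditions \eqref{bedp}. Thus, after integration in time,
\begin{equation*}
\|\nabla\f d\|_{L^r(\f L^p)}^r\leq c\int_0^T\|\f d(t)\|_{\He}^{r-\theta_1}\|\f d(t)\|_{\Hc}^{\theta_1}\,\de t,
\end{equation*}
and analogously for the $\f d$-inequality with $(r,p,\theta_1)$ replaced by $(s,q,\theta_2)$.

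\smallskip

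\noindent\emph{Step 3: H\"older in time.} I bound the $\He$-factor by $\|\f d\|_{L^\infty(\He)}^{r-\theta_1}$ and pull it out of the integral. For the remaining $\int_0^T\|\f d\|_{\Hc}^{\theta_1}\,\de t$, I apply H\"older with conjugate exponents $2/\theta_1$ and $2/(2-\theta_1)$ (which is permissible since $\theta_1\in[0,2]$) to obtain
\begin{equation*}
\int_0^T\|\f d(t)\|_{\Hc}^{\theta_1}\,\de t\leq T^{1-\theta_1/2}\,\|\f d\|_{L^2(\Hc)}^{\theta_1}.
\end{equation*}
Absorbing $T^{1-\theta_1/2}$ into the generic constant $c$ finishes the first inequality, and the same argument with $(\theta_1,r)$ replaced by $(\theta_2,s)$ finishes the second. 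The only genuine subtleties are to verify the Sobolev exponent arithmetic and to cover the endpoint cases $\theta_i\in\{0,2\}$ (where either the H\"older step is trivial or the $\f L^\infty$-factor disappears); no essential obstacle arises because $n=3$ keeps both $\Hc\hookrightarrow \f L^\infty$ and $\Hc\hookrightarrow \f W^{1,6}$ valid.
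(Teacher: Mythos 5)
Your treatment of the first inequality and of Steps 2--3 is correct and coincides with the paper's argument: interpolate $\|\nabla\f d\|_{\f L^p}$ between $\f L^2$ and $\f L^6$, use $\Hc\hookrightarrow\f W^{1,6}$, pull out the $L^\infty(\Hb)$ factor, and control $\int_0^T\|\f d\|_{\Hc}^{\theta_1}\,\de t$ by H\"older in time (the paper phrases this as the embedding $L^2(0,T;\Hc)\hookrightarrow L^{\theta_1}(0,T;\Hc)$ for $\theta_1\le 2$, which is the same thing).

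There is, however, a genuine gap in your Step 1 for the second inequality. Interpolating $\|\f d\|_{\f L^q}$ between $\f L^6$ and $\f L^\infty$ gives $\|\f d\|_{\f L^q}\le\|\f d\|_{\f L^6}^{6/q}\|\f d\|_{\f L^\infty}^{1-6/q}$, and combining this with the plain embedding $\Hc\hookrightarrow\f L^\infty$ produces the exponent $1-\tfrac6q$ on $\|\f d\|_{\Hc}$, i.e.\ \emph{twice} the value $\beta=\tfrac12\bigl(1-\tfrac6q\bigr)$ that you write down and that the lemma requires (since $\theta_2=s\bigl(\tfrac12-\tfrac3q\bigr)$). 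With the exponent $1-\tfrac6q$ you would only obtain $\|\f d\|_{L^s(\f L^q)}^s\le c\|\f d\|_{L^2(\Hc)}^{2\theta_2}\|\f d\|_{L^\infty(\Hb)}^{s-2\theta_2}$ under the stronger restriction $\theta_2\le1$, which is not the stated result. Your $\beta$ is the correct Gagliardo--Nirenberg exponent, but it does not follow from the ingredients you list. To close the gap you need the derivative-gaining estimate: either invoke the classical Gagliardo--Nirenberg inequality directly in the form
\begin{equation*}
\|\f d\|_{\f L^q}\le c\,\|\f d\|_{\Hc}^{1/2-3/q}\,\|\f d\|_{\f L^6}^{1/2+3/q}\,,\qquad q\in[6,\infty]\,,
\end{equation*}
as the paper does (with the remark that the endpoint $q=\infty$ holds with both exponents equal to $1/2$), or equivalently replace the embedding $\Hc\hookrightarrow\f L^\infty$ by the multiplicative Agmon-type endpoint estimate $\|\f d\|_{\f L^\infty}\le c\,\|\f d\|_{\Hc}^{1/2}\|\f d\|_{\f L^6}^{1/2}$ before performing the $\f L^6$--$\f L^\infty$ interpolation. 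With that correction the rest of your argument goes through unchanged.
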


\begin{proof}
Since $\Hc$ is continuously embedded in $\f W^{1,6}$, we immediately find
with H\"{o}lder's inequality that for $p\in [2,6]$ and any $\f d \in \Hc$
\begin{equation*}
\|\nabla \f d \|_{\f L^{p}} \leq \|\nabla \f d\|_{\f L^6}^{3/2-3/p}\|\nabla \f d\|_{\f L^2}^{3/p-1/2}
\le c \| \f d\|_{\Hc}^{3/2-3/p}\|\f d\|_{\Hb}^{3/p-1/2} \, .
\end{equation*}
From the classical Gagliardo--Nirenberg inequality
(see, e.g., Nirenberg~\cite{nirenex}, Friedman~\cite{friedman}, or Zeidler~\cite[Section 21.19]{zeidler2a}), we infer that there exists $c>0$ such that for $q\in [6,\infty]$ and
any $\f d \in \Hc$
\begin{align*}
\|\f d \|_{\f L^{q}} \leq c\| \f d\|_{\Hc}^{1/2 - 3/q}\|\f d\|_{\f L^6}^{1/2 +3/q}
\le c \| \f d\|_{\Hc}^{1/2 - 3/q}\|\f d\|_{\Hb}^{1/2 + 3/q}
\, ;
\end{align*}
the case $q=\infty$ remains true with both exponents being $1/2$.

Let us now consider $\f d \in L^\infty(0,T;\f H^1_0) \cap L^2(0,T;\Hc)$ and integrate the foregoing estimates in time. We then find
\begin{equation*}
\|\nabla \f d\|_{L^r(\f L^{p})} \le c \|\f d\|^{3/2-3/p} _{L^{3r(1/2-1/p)}(\Hc)}      \| \f d\|_{L^{\infty}(\Hb)}^{3/p-1/2}
\end{equation*}
as long as $ \theta_1 = 3r(1/2-1/p) \le 2$ such that $L^2(0,T;\Hc)$ is continuously embedded in $L^{\theta_1}(0,T;\Hc)$. This proves the first inequality.

In the same fashion, one proves the second estimate.
\end{proof}

\begin{lemma}\label{cor:vreg}
There exists a constant $c>0$ such that for $p \in [2,6]$, $r\in [2,\infty]$ with
\begin{equation*}
\frac{1}{p} = \frac{1}{2} - \frac{2}{3r}
\end{equation*}
and any $\f v \in L^\infty(0,T;\f L^2)\cap L^2(0,T;\f H^1)$
\begin{align*}
 \|\f v\|_{L^r(\f L^{p})}^r \leq c \| \f v\|_{L^2(\He)}^2\|\f v\|_{L^\infty(\Le)}^{r-2}
 \, .
\end{align*}
\end{lemma}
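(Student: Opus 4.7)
The plan is to mimic the proof of Lemma~\ref{lem:nir}, but applied directly to $\f v$ (instead of $\nabla \f d$) using the Sobolev embedding $\f H^1 \hookrightarrow \f L^6$ in three dimensions together with an $L^p$-interpolation and a subsequent integration in time.

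First, for fixed $t$ and any $\f v(t) \in \f H^1$, I would interpolate the Lebesgue norm. For $p \in [2,6]$, writing $\frac{1}{p} = \frac{\theta}{6} + \frac{1-\theta}{2}$ yields $\theta = 3/2 - 3/p$, so that
\begin{equation*}
\|\f v\|_{\f L^p} \leq \|\f v\|_{\f L^6}^{3/2-3/p}\|\f v\|_{\f L^2}^{3/p-1/2} \leq c \|\f v\|_{\He}^{3/2-3/p}\|\f v\|_{\Le}^{3/p-1/2} ,
\end{equation*}
where the last step uses the Sobolev embedding $\He \hookrightarrow \f L^6$.

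Next I would raise this inequality to the $r$-th power and integrate on $(0,T)$, pulling the $\Le$-factor out as an $L^\infty$-norm:
\begin{equation*}
\|\f v\|_{L^r(\f L^p)}^r \leq c\, \|\f v\|_{L^\infty(\Le)}^{r(3/p-1/2)} \int_0^T \|\f v(t)\|_{\He}^{r(3/2-3/p)}\,\de t .
\end{equation*}
The arithmetic is then tuned by the relation $1/p = 1/2 - 2/(3r)$: a direct computation shows
\begin{equation*}
r\bigl(3/2-3/p\bigr) = 3r\bigl(1/2-1/p\bigr) = 3r \cdot \frac{2}{3r}= 2, \qquad r\bigl(3/p-1/2\bigr) = r - 2 ,
\end{equation*}
so the time integral is exactly $\|\f v\|_{L^2(\He)}^2$, and the prefactor becomes $\|\f v\|_{L^\infty(\Le)}^{r-2}$, which yields the claim.

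I do not expect any serious obstacle here; the only care required is checking that the exponents $r(3/2-3/p)$ and $r(3/p-1/2)$ produced by raising to power $r$ match $2$ and $r-2$, respectively, under the scaling relation \eqref{bedp}, and that the endpoint cases $p=2$, $p=6$, $r=2$, $r=\infty$ remain meaningful (they do, since $\theta = 3/2 - 3/p \in [0,1]$ there). The result is the natural $\f v$-version of Lemma~\ref{lem:nir} and is proved by exactly the same scheme, only with the Sobolev embedding replacing the use of $\Hc \hookrightarrow \f W^{1,6}$.
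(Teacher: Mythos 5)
Your proof is correct and follows exactly the route the paper intends: the paper's own proof of this lemma consists only of the remark that it is analogous to Lemma~\ref{lem:nir}, whose argument is precisely the interpolation $\|\f v\|_{\f L^p}\le\|\f v\|_{\f L^6}^{3/2-3/p}\|\f v\|_{\f L^2}^{3/p-1/2}$ combined with the embedding $\He\hookrightarrow\f L^6$, raising to the $r$-th power and integrating in time. Your exponent bookkeeping under the relation $1/p=1/2-2/(3r)$ is also correct, so nothing further is needed.
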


\begin{proof}
The proof is analogous to that of Lemma~\ref{lem:nir}.
\end{proof}

\section{Ericksen--Leslie model and main result}\label{sec:model}
\subsection{Governing equations}
We consider the Ericksen--Leslie model~\eqref{ELeq} for dimensionless quantities with $\rho_1$ set to zero. We focus on a rather general class of free energy functions and incorporate the restriction of the director $\f d$ onto unit vectors into the free energy via a classical relaxation technique, see also~\eqref{penal}.
Furthermore, we restrict our considerations to the incompressible case with $\rho \equiv 1$. The governing equations then read as
\begin{subequations}\label{eq:strong}
\begin{align}
\t {\f v}  + ( \f v \cdot \nabla ) \f v + \nabla p + \di \f T^E- \di  \f T^L&= \f g, \label{nav}\\
\t {\f d }+ ( \f v \cdot \nabla ) \f d -\sk{v}\f d + \lambda \sy{v} \f d +\gamma \f q & =0,\label{dir}\\
\di \f v & = 0\, .
\end{align}%
\end{subequations}%

We recall that $\f v : \ov{\Omega}\times [0,T] \ra \R^3$ denotes the velocity  of the fluid, $\f d:\ov{\Omega}\times[0,T]\ra \R^3$ represents the orientation of the rod-like molecules, and $p:\ov{\Omega}\times [0,T] \ra\R$ denotes the pressure.
The Helmholtz free energy potential~$F$, which is described rigorously in the next section, is assumed to depend only on the director and its gradient, $F= F( \f d, \nabla \f d)$.
The free energy functional~$\mathcal{F}$  is defined by
\begin{align*}
\mathcal{F}: \He \ra \R , \quad \mathcal{F}(\f d):= \int_{\Omega} F( \f d, \nabla \f d) \de \f x \,,
\end{align*}
and $\f q$ is its variational derivative (see Furihata and Matsuo~\cite[Section 2.1]{furihata}),
\begin{subequations}\label{abkuerzungen}
\begin{align}\label{qdefq}
\f q :=\frac{\delta \mathcal{F}}{\delta \f d}(\f d) =  \pat{F}{\f d}(\f d , \nabla\f d)-\di \pat{F}{\nabla \f d}(\f d, \nabla \f d)\, .
\end{align}
The Ericksen stress tensor $\f T^E$ is given by
\begin{equation}
\f T^E = \nabla \f d^T \pat{F}{\nabla \f d}( \f d , \nabla\f d ) \, .\label{Erik}
\end{equation}
The Leslie tensor is given by
\begin{align}
\begin{split}
\f T^L ={}&  \mu_1 (\f d \cdot \sy{v}\f d )\f d \otimes \f d +\mu_4 \sy{v}
 + {(\mu_5+\mu_6)} \left (  \f d \otimes\sy{v}\f d \right )_{\sym}
\\
& +{(\mu_2+\mu_3)} \left (\f d \otimes \f e  \right )_{\sym}
 +\frac{\lambda}{\gamma}\left ( \f d \otimes \sy{v}\f d  \right )_{\skw} + \frac{1}{\gamma}\left (\f d \otimes \f e  \right )_{\skw}\, ,
\end{split}\label{Leslie}
\end{align}
where
\begin{align}
\f e : = \t {\f d} + ( \f v \cdot \nabla ) \f d - \sk v\f d\, .\label{e2}
\end{align}
This follows immediately from~\eqref{stressE}.
Following Walkington~\cite{wal1}, we have sorted the Leslie tensor~\eqref{stressE} into symmetric and skew symmetric parts.
We explicitly inserted~\eqref{symcon} established in Leslie~\cite{leslie}, and we set
\begin{align}
\gamma := - \frac{1}{ \lambda_1}= \frac{1}{\mu_3-\mu_2}, \qquad
\lambda :=  \frac{\lambda_2}{\lambda_1} = \gamma(\mu_6-\mu_5) . \label{constanten}
\end{align}
\end{subequations}
We emphasise that Parodi's law~\eqref{parodi} is neither essential for the reformulation nor the existence of weak solutions (see also
Wu et~al.~\cite{hiu}).

To ensure the dissipative character of the system, we assume that
\begin{align}
\begin{gathered}
\mu_1  > 0, \quad \mu_4 > 0,  \quad  \gamma > 0 ,\quad
\mu_5+\mu_6 - \lambda (\mu_2+\mu_3)>0    \, ,
\\
4 \gamma \big( \mu_5+\mu_6 - \lambda (\mu_2+\mu_3)\big)>
\big(\gamma(\mu_2+\mu_3) -\lambda\big)^2\,.
\end{gathered}\label{con}
\end{align}
The case $\mu_1=0$ can be dealt with similarly but somewhat simpler.

Finally, we impose boundary and initial conditions as follows:
\begin{subequations}\label{anfang}
\begin{align}
\f v(\f x, 0) &= \f v_0 (\f x) \quad\text{for } \f x \in \Omega ,
&\f v (  \f x, t ) &= \f 0  &\text{for }( \f x , t ) \in \partial \Omega
\times [0,T]
, \\
\f d (  \f x, 0 ) & = \f d_0 ( \f x) \quad\text{for } \f x \in \Omega ,
&\f d (  \f x ,t ) & = \f d_1 ( \f x )  &\text{for }(  \f x , t) \in  \partial \Omega \times [0,T]  .
\end{align}
\end{subequations}
We shall assume that $\f d_1= \f d_0$ on $\partial \Omega$, which is a compatibility condition providing regularity.

\subsection{A class of free energy potentials}\label{free}
This section is devoted to the free energy potential that describes the inner forces between the molecules.
We commence with a class of free energies that leads to a linear principal part. The more delicate case with a nonlinear principal part is dealt with in Section\ref{sec:more}.
Let us consider
\begin{equation}
F= F( \f h, \f S) \, \in \C^2 ( \R^3 \times \R^{3\times 3} ; \R) \,
\label{regF}%
\end{equation}
and let us assume that
\begin{subequations}\label{Lambda1}
\begin{equation}\label{LambdaS}
\frac{\partial^2 F}{\partial \f S^2} \equiv \f \Lambda \in \R^{3\times 3\times 3\times 3} \, ,
\end{equation}
where $\f \Lambda$ satisfies the symmetry condition
\begin{equation}\label{Lambdasym}
\f \Lambda_{ijkl} = \f \Lambda_{klij} \quad\text{for } i,j,k,l = 1,2,3 \,
\end{equation}
and the following strong Legendre--Hadamard (strong ellipticity) condition:
there exists $\eta > 0$ such that for all $\f a , \, \f b \in \R^3$
\begin{equation}\label{Lambdaellip}
\f a \otimes \f b : \f \Lambda : \f a \otimes \f b \ge \eta \,
|\f a |^2 |\f b|^2 \, .
\end{equation}
\end{subequations}
\begin{remark}
It is possible to generalise the assumptions on the second derivative of $F$ with respect to $\f S$. The tensor $\f \Lambda $ can continuously depend on the spatial variable $\f x$ (see Remark~\ref{rem:Lambda1}).
Additionally, a nonlinear term, which is sufficiently small, can be handled as a part of the second derivative of $F$ with respect to $\f S$ (see Section~\ref{sec:more}).
\end{remark}

In the course of the proof of our main result, we shall need further coercivity-type  assumptions on the free energy $F$ and its derivatives.
Let us assume that
there exist  $\eta_1>0$ and $\eta_2,\eta_3 \ge 0$ such that for all $\f h \in \R^3$ and $\f S \in \R^{3\times 3}$
\begin{align}
F(\f h ,\f S) \geq \eta_1|\f S|^2 -\eta_2 | \f h |^2-\eta_3 \, .\label{coerc1}
\end{align}
For a particular free energy, such a condition may follow from \eqref{Lambda1} together with suitable growth or nonnegativity assumptions on the lower order terms.

Later we will have that $F= F( \f d, \nabla \f d)$ with $\f d = \f d(\f x,t)$. Under the regularity assumption~\eqref{regF}, we may now consider (see also (\ref{qdefq}))
\begin{equation}\label{Edef}
\f q=   \pat{F}{\f h}(\f d, \nabla \f d)-\di \pat{F}{\f S}(\f d, \nabla \f d)
\, .
\end{equation}
With respect to $\f q$, we first observe that formally
\begin{align}
\f q  &= \pat{F}{\f h} (\f d, \nabla \f d) -\pat{^2 F}{\f{S}^2 } (\f d, \nabla \f d)\drei
\nabla (\nabla {\f d})^T - \pat{^2 F}{\f S\partial \f h}(\f d, \nabla \f d) : \nabla \f d^T
\nonumber
\\
&= \pat{F}{\f h} (\f d, \nabla \f d) - \di \left(  \f \Lambda :  \nabla \f d\right) - \pat{^2 F}{\f S\partial \f h}(\f d, \nabla \f d) : \nabla \f d^T
\label{QQQQ}
\, .
\end{align}
For arbitrary  $\f a,\f b,\f c \in \R^3$, one finds
\begin{align}\label{abcabc}
|\f a - \f b - \f c|^2 \geq \frac{1}{2}\, |\f b|^2 - 4|\f c|^2 - 4|\f a|^2\, ,
\end{align}
and thus
\begin{align*}
|\f q|^2\geq \frac{1}{2} \left | \di \left(  \f \Lambda :  \nabla \f d\right) \right  |^2 - 4 \left | \pat{^2 F}{\f S\partial \f h}(\f d, \nabla \f d) : \nabla \f d^T \right |^2 - 4 \left |\pat{F}{\f h} (\f d, \nabla \f d)\right |^2 \, .
\end{align*}

This calculation motivates the following growth conditions:
There exist $C_{\f S\f h}>0, C_{\f h} >0 $ and $\gamma_1 \in [ 2, 10/3)$, $\gamma_2 \in [ 6, 10)$ such that for all $\f h \in \R^3$ and $ \f S \in \R^{ 3\times 3} $
\begin{subequations}
\begin{align}
\left |\pat{^2F}{\f S\partial \f h } (\f h ,\f S)\right | & \leq C_{\f S\f h} \left (| \f S|^{\gamma_1/2-1}+ | \f h |^ { \gamma_3}+1\right )\label{FSh}\\
\left |\pat{F}{\f h  }(\f h ,\f S) \right |& \leq C_{ \f h}\left (|\f S|^{{\gamma_1}/{2}} + |\f h |^ {\gamma_2 /2} +1 \right ),
\label{FcoercE}
\end{align}
where
\begin{equation}\label{beta}
\gamma_3:=\frac{(\gamma_1-2)\gamma_2}{2\gamma_1} \, .
\end{equation}
This choice of exponents will allow us to derive appropriate a priori estimates.
Of course, the term with $|\f S|^{\gamma_1/2-1}$ in~\eqref{FSh} is superfluous for a potential fulfilling~\eqref{LambdaS} but will be essential for the analysis in Section~\ref{sec:more}.
\label{freeenergy}%
\end{subequations}

\subsection{Existence of weak solutions}
In this section, we state our main result on the existence of weak solutions. We  first give a precise definition of what we mean by a weak solution. We shall work in solenoidal function spaces and thus only consider the variables velocity and director.

Let us start with a reformulation of the Ericksen stress tensor.
For $\f v\in \V $ and $\f d \in \Hc$, we find
\begin{align}
  \nabla (( \f v \cdot \nabla ) \f d)= \nabla \f d \nabla \f v + ( \f v \cdot \nabla ) \nabla \f d \, .\label{gradF}
\end{align}
For sufficiently smooth functions $\f h: \ov{\Omega}\ra \R^3$, $\f S: \ov{\Omega}\ra \R^{3\times 3}$, we obtain
\begin{align*}
(\f v\cdot \nabla) F ( \f h, \f S)
& = \sum_{i=1}^3 \f v_i \left( \sum _{j=1}^3 \pat{F}{\f h_j } ( \f h, \f S) \pat{\f h_j }{\f x_i } + \sum_{j,k=1}^3 \pat{F}{\f S_{jk}}( \f h, \f S)\pat{ \f S_{jk}}{\f x_i} \right)\\
&= \pat{F}{\f h}(\f h, \f S) \cdot ( \f v\cdot \nabla ) \f h + \pat{F}{\f S}: ( \f v\cdot \nabla) \f S\, .
\end{align*}
With \eqref{Erik}, \eqref{Edef}, \eqref{gradF},  and integration by parts, we obtain for all $\f v\in \V$
\begin{align}
 \left(\f T^E :\nabla \f v  \right)-\left \langle \nabla \f d ^T \f q ,  \f v\right \rangle  & = \left( \nabla\f  d ^T \pat{F}{\f S}:\nabla \f v  \right) +\left\langle  \nabla\f  d ^T \di \pat{F}{\f S}, \f v  \right\rangle -
 \left( \nabla\f  d ^T \pat{F}{\f h}, \f v  \right)\notag\\
  &= \left(  \pat{F}{\f S}:\nabla\f  d  \nabla \f v  \right) +\left\langle  \di \pat{F}{\f S},(\nabla\f  d ) \f v  \right\rangle - \left(  \pat{F}{\f h},(\nabla\f  d ) \f v  \right)\notag \\
    &= \left(  \pat{F}{\f S}:\nabla\f  d  \nabla \f v  \right) +\left(  \di \pat{F}{\f S},(\f v\cdot \nabla)\f  d   \right)- \left(  \pat{F}{\f h},(\f v\cdot \nabla)\f  d    \right)\notag \\
&= \left(  \pat{F}{\f S}:\nabla\f  d  \nabla \f v  \right) -\left(  \pat{F}{\f S}:\nabla \left( (\f v\cdot \nabla)\f  d  \right)  \right)- \left(  \pat{F}{\f h},(\f v\cdot \nabla)\f  d    \right)\notag \\
 &= -\left (   \pat{ F}{\f S}: (\f v\cdot \nabla )\nabla \f d  \right )- \left (\pat{F}{\f h} ,  (\f v\cdot \nabla)\f  d \right )\notag \\
& =- \left (\nabla F , \f v \right ) = \intet{ F\, ( \di  \f v)} = 0 \, ,
\label{identi}
\end{align}
where we omitted the argument $(\f d, \nabla \f d)$ for readability.

The above identity allows us to reformulate equation~\eqref{eq:strong} by incorporating  $F$ in a redefinition of the pressure,
$p:=p+F$, and by finally replacing $\di \f T^E $ by $ -\nabla \f d^T \f q$.

\begin{definition}[Weak solution]\label{defi:weak}
Let  $( \f v_0, \f d_0)\in \Ha \times \Hb$ and $\f g \in L^2(0,T;\Vd)$.
A pair $(\f v , \f d )$ is said to be a solution to~\eqref{eq:strong}
with \eqref{abkuerzungen}, \eqref{anfang} if
\begin{align}
\begin{split}
\f v &\in L^\infty(0,T;\Ha)\cap  L^2(0,T;\V)
\cap W^{1,2}(0,T; ( \f H^2 \cap \V)^*),
\\ \f d& \in L^\infty(0,T;\Hb)\cap  L^2(0,T;\Hc) \cap W^{1,4/3}  (0,T;  \f  L^2 ),
\end{split}\label{weakreg}
\end{align}
if
\begin{subequations}\label{weak}
\begin{align}
\begin{split}
-\int_0^T (\f v(t), \partial_t\f \varphi(t)) \de t &+ \int_0^T ((\f v(t)\cdot \nabla) \f v(t), \f \varphi(t)) \de t  - \intte{\left (\nabla
\f d(t)^T \f q(t)  ,  \f \varphi(t)
\right )}\\&+ \intte{(\f T^L(t): \nabla \f \varphi(t) ) } =\intte{ \left \langle \f g (t),\f \varphi(t)\right \rangle } ,\quad
\end{split}\label{eq:velo}\\\begin{split}
-\intte{( \f d(t), \partial_t\f \psi(t) ) } &+ \intte{((\f v(t)\cdot \nabla ) \f d(t), \f \psi(t))} - \intte{\left( (\nabla \f v(t))_{\skw} \f d(t) , \f \psi(t)\right )}\\&
+\lambda\intte{\left( (\nabla \f v(t))_{\sym} \f d(t) , \f \psi(t)\right)}
+ \gamma\intte{\left(\f q(t) , \f \psi(t)\right)}=0
\end{split}
\label{eq:dir}
\end{align}%
\end{subequations}
for all solenoidal $ \f \varphi \in \mathcal{C}_c^\infty( \Omega \times (0,T);\R^3)$ and all $ \f \psi \in \mathcal{C}_c^\infty( \Omega \times (0,T);\R^3)$, and if the initial conditions are satisfied.
\end{definition}

\begin{remark} Let $(\f v , \f d )$ be a solution in the sense of Definition~\ref{defi:weak}. Then $\f d \in L^{\infty}(0,T; \Hb)$ as well as
\begin{align*}
\f d \in W^{1,4/3}(0,T;\f L^2)\subset\AC([0,T]; \f L^2) \subset\C_w([0,T]; \f L^2) \, .
\end{align*}
Moreover, there holds (see Lions and Magenes~\cite[Ch.~3, Lemma~8.1]{magenes})
\begin{align*}
\C_{w}([0,T];\f L^2)\cap L^{\infty}(0,T; \Hb) = \C_w ([0,T]; \Hb)
\end{align*}
such that $\f d \in \C_w ([0,T]; \Hb)$. Analogously, we find $\f v \in \C_w ([0,T]; \Ha)$.

The initial conditions for the Navier--Stokes-like equation~\eqref{eq:velo} and for the director equation~\eqref{eq:dir} are thus attained in the following sense:
\begin{align}
\f v(t) \rightharpoonup \f v_0 \quad \text{in } \Ha \, ,
\quad
\f d(t) \rightharpoonup \f d_0
\quad \text{in } \Hb
\quad \text{as } t \ra 0\, .\label{iniweak}
\end{align}
\end{remark}

The above notion of a weak solution can be justified as follows.

\begin{proposition}\label{propprop}
Under the regularity assumptions \eqref{weakreg}, all terms appearing in~\eqref{weak} are well-defined.
\end{proposition}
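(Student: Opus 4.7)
The proof is a systematic verification that each integrand appearing in \eqref{eq:velo}--\eqref{eq:dir} produces a finite integral when paired with a smooth compactly supported test function. My plan is to group the ten terms by difficulty: the immediate ones first, then the Leslie stress, and finally the two free-energy integrals, which represent the main work.

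The terms $\int_0^T(\f v,\partial_t\f\varphi)\,\de t$, $\int_0^T\langle\f g,\f\varphi\rangle\,\de t$ and $\int_0^T(\f d,\partial_t\f\psi)\,\de t$ follow immediately from the regularity \eqref{weakreg}. The convective and rotation/stretching terms are dispatched by H\"older together with the three-dimensional Sobolev embeddings $\V,\Hb\hookrightarrow\f L^6$ and $\Hc\hookrightarrow\f L^\infty$: for instance, $\f v\in L^2(\f L^6)$ and $\nabla\f v\in L^2(\Le)$ handle $((\f v\cdot\nabla)\f v,\f\varphi)$, while $\nabla\f v\in L^2(\Le)$ paired with $\f d\in L^\infty(\f L^6)$ handles $((\nabla\f v)_{\skw/\sym}\f d,\f\psi)$.

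For the Leslie stress $\f T^L$ each of the six summands is at most quartic in $\f d$ and linear in either $\nabla\f v$ or in $\f e=\partial_t\f d+(\f v\cdot\nabla)\f d-\sk v\f d$. Applying Lemma~\ref{lem:nir} with the choice $s=q=8$ (for which $\theta_2=1\le 2$) gives $\f d\in L^8(\f L^8)$, so that $|\f d|^4\in L^2(\f L^2)$; together with $\sy v\in L^2(\Le)$ this bounds the quartic summand. The remaining summands involve $\f e$, whose three constituents are controlled by $\partial_t\f d\in L^{4/3}(\Le)$, by $\f v\in L^2(\f L^6)$ with $\nabla\f d\in L^2(\Le)$, and by $\nabla\f v\in L^2(\Le)$ with $\f d\in L^\infty(\f L^6)$, respectively, after which another application of H\"older against $\f d\in L^\infty(\f L^6)$ finishes the estimate.

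The real work is the two free-energy integrals $\int(\f q,\f\psi)\,\de t$ and $\int(\nabla\f d^T\f q,\f\varphi)\,\de t$, and this is where I expect the main obstacle to lie. I decompose $\f q$ as in \eqref{QQQQ}. The principal part $-\di(\f\Lambda:\nabla\f d)$ belongs to $L^2(\Le)$ because $\f d\in L^2(\Hc)$ and $\f\Lambda$ is constant. The two remaining pieces are governed by the growth bounds \eqref{FSh}--\eqref{FcoercE}, so that the task reduces to placing the monomials $|\nabla\f d|^{\gamma_1/2}$, $|\f d|^{\gamma_2/2}$, and $|\f d|^{\gamma_3}|\nabla\f d|$ in $L^2(\Le)$. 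I would apply Lemma~\ref{lem:nir} with the diagonal choices $p=r=\gamma_1$ and $q=s=\gamma_2$; the admissibility conditions $\theta_1,\theta_2\le 2$ reduce exactly to $\gamma_1\le 10/3$ and $\gamma_2\le 10$, which is precisely why these upper bounds are imposed in \eqref{freeenergy}. This yields $\nabla\f d\in L^{\gamma_1}(\f L^{\gamma_1})$ and $\f d\in L^{\gamma_2}(\f L^{\gamma_2})$, and a short H\"older computation that invokes the defining identity \eqref{beta} for $\gamma_3$ places the cross term in $L^2(\Le)$ as well. Hence $\f q\in L^2(\Le)$ and $\int(\f q,\f\psi)\,\de t$ is finite; one final application of H\"older with $\nabla\f d\in L^\infty(\Le)$ and $\f q\in L^2(\Le)$ gives $\nabla\f d^T\f q\in L^2(\f L^1)$, which suffices for the Ericksen-type integral. (Alternatively, identity \eqref{identi} can be used to trade that integrand for $\f T^E:\nabla\f\varphi$, and the same interpolation machinery then controls $\f T^E$ directly.)
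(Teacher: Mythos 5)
Your proof is correct and follows essentially the same route as the paper: the same decomposition of $\f q$ via \eqref{QQQQ}, the same use of the growth conditions \eqref{FSh}--\eqref{FcoercE} with \eqref{beta}, and the same interpolation via Lemma~\ref{lem:nir} (you apply it directly at $p=r=\gamma_1$, $q=s=\gamma_2$, whereas the paper first passes to the endpoint norms $L^{10/3}(\f L^{10/3})$ and $L^{10}(\f L^{10})$). The only cosmetic difference is that you settle for coarser target spaces (e.g.\ $L^2(\f L^1)$ for $\nabla\f d^T\f q$ and $L^1(L^1)$ for the quartic Leslie term instead of the paper's $L^{5/4}(\f L^{5/4})$ and $L^{10/9}(\f L^{10/9})$), which is entirely sufficient for well-definedness against smooth compactly supported test functions.
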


\begin{proof}
Obviously, we only have to concentrate on the nonlinear terms.
Let us start with equation~\eqref{eq:dir}.
With H\"older's inequality, we easily find that
$(\f v\cdot \nabla ) \f d \in L^{2}(0,T; \f L^{3/2})$ since
\begin{equation*}
\| (\f v\cdot \nabla ) \f d\|_{L^{2}(\f L^{3/2})}
\le \| \f v \|_{L^{\infty}(\f L^{2})} \|\nabla \f d\|_{L^{2}(\f L^6)}
\end{equation*}
and since $\f H^2 \hookrightarrow \f W^{1,6}$. Similarly, we have
\begin{equation*}
\| \sk v\f d \|_{L^{2}(\f L^{3/2})} + \| \sy v\f d \|_{L^{2}(\f L^{3/2})}
 \le c \|\nabla \f v \|_{L^2(\f L^{2})} \|\f d\|_{L^{\infty}(\f L^6)} \, ,
\end{equation*}
which shows that also $\sk v\f d , \,  \sy v\f d\in L^{2}(0,T;\f L^{3/2})$.
Note that $\f L^{3/2} \hookrightarrow \Hd$.

The term $\f q$~(see~\eqref{Edef}) is in $L^2(0,T;\Le)$. Indeed, with~\eqref{QQQQ} we can estimate
\begin{align*}
\| \f q\|_{L^2(\Le)} &\leq \left \| \pat{F}{\f h} ( \f d , \nabla \f d)  \right \|_{L^2(\Le)} + \left \| \pat{^2F}{\f S \partial\f h} ( \f d , \nabla \f d):\nabla \f d^T  \right \|_{L^2(\Le)} + \| \di ( \f \Lambda : \nabla \f d) \| _{L^2(\Le)} \\
&= I_1+I_2 +I_3 \, .
\end{align*}
Regarding the term $I_1$, we see with growth condition~\eqref{FcoercE} and Lemma~\ref{lem:nir} that
\begin{align*}
I_1 & \leq C_{\f h}\left \|  | \nabla \f d| ^{\gamma_1/2} + | \f d|^{\gamma_2/2} +1 \right \| _{L^2(L^2)} \\
& = C_{\f h} \left (  \| \nabla \f d\|_{L^{\gamma_1}(\f L^{\gamma_1})}^{\gamma_1/2} + \| \f d \|_{L^{\gamma_2}(\f L^{\gamma_2})}^{\gamma_2/2}+ T^{1/2}|\Omega|^{1/2}                \right )\\
& \leq
 c \left (  \| \nabla \f d\|_{L^{10/3}(\f L^{10/3})}^{5/3} + \| \f d \|_{L^{10}(\f L^{10})}^{5}+          1 \right )\\
& \leq c \left ( \| \f d \|_{L^2(\Hc)}\|\f d \|_{L^{\infty}(\Hb)}^{2/3}+ \| \f d \|_{L^2(\Hc)}\|\f d \|_{L^{\infty}(\Hb)}^{4}+1 \right )\, .
\end{align*}
The term $I_2$ is dealt with in a similar fashion. The growth condition~\eqref{FSh} gives
\begin{align*}
I_2 \leq C_{\f S\f h} \big \| | \nabla \f d |( | \nabla \f d | ^ { \gamma_1/2-1} + | \f d|^{\gamma_3}+1)\big \|_{L^2(L^2)}\, .
\end{align*}
An application of Young's inequality together with definition~\eqref{beta} and Lemma~\ref{lem:nir} provides that
\begin{align*}
I_2& \leq c \left  \| | \nabla \f d|^{\gamma_1/2}+ | \f d |^{\gamma_2/2} + | \nabla \f d| \right \|_{L^2(\Le)} \\
& \le c\left ( \| \nabla \f d\|_{L^{\gamma_1}(\f L^{\gamma_1})}^{\gamma_1/2} + \| \f d \|_{L^{\gamma_2}(\f L^{\gamma_2})}^{\gamma_2/2} + \| \nabla \f d\|_{L^2(\Le)}  \right )\\
& \leq c\left ( \| \nabla \f d\|_{L^{10/3}(\f L^{10/3})}^{5/3} + \| \f d \|_{L^{10}(\f L^{10})}^{5} + 1  \right )\\
& \leq c \left ( \| \f d \|_{L^2(\Hc)}\|\f d \|_{L^{\infty}(\Hb)}^{2/3} +\| \f d \|_{L^2(\Hc)}\|\f d \|_{L^{\infty}(\Hb)}^4 +1 \right )\, .
\end{align*}
Finally, the term $I_3$ can be estimated by
\begin{align*}
I_3 \leq c \| \f d \|_{L^2(\Hc)}\,
\end{align*}
since $\f \Lambda$ is a constant tensor (see~\eqref{LambdaS}).

Let us turn to the Navier--Stokes-like equation~\eqref{eq:velo}. The convection term can be shown to be in $ L^{10/7}(0,T; \f L^{15/13})$ since
 \begin{align*}
 \| ( \f v\cdot \nabla ) \f v \|_{L^{10/7}(\f L^{15/13})} \leq \| \f v \| _{ L^5 ( \f L^{30/11})} \| \f v\|_{L^2 ( \Hb)}\, ,
 \end{align*}
which can easily be shown with H\"older's inequality.
Moreover, $\f v \in  L^5 ( 0,T;\f L^{30/11})$ in view of  Lemma~\ref{cor:vreg}.
Since $\f q\in L^2(0,T; \Le)$, we easily find with H\"{o}lder's inequality that
\begin{align*}
\|\nabla\f d ^T \f q\|_{L^{5/4}(\f L^{5/4})} \le
\| \nabla \f d\|_{L^{10/3}(\f L^{10/3})}\| \f q\|_{L^2(\Le)}
\, ,
\end{align*}
where the norm of $\nabla \f d$ can be estimated as before.

It remains to estimate the Leslie stress. From definition~\eqref{Leslie} and H\"older's inequality, it follows that
\begin{align}
\begin{split}
\| \f T^L\|_{L^{10/9}(\f L^{10/9}) }& \leq c \big ( \| (\f d \cdot \sy v \f d) \f d \otimes \f d \|_ {L^{10/9}(\f L^{10/9}) }
+ \|\nabla \f v \|_ {L^{10/9}(\f L^{10/9}) }
\\ & \qquad + \| \f d \otimes \sy v \f d \| _{L^{10/9}(\f L^{10/9}) }
+ \| \f d \otimes \f e \|_{L^{10/9}(\f L^{10/9})}
\big ) \\
& \leq c \big (\| \f v \|_{L^2( \Hb)} \| \f d\|_{L^{10}(\f L^{10})}^4
+ \| \f v \|_{L^2( \Hb)}
\\&\qquad
+ \| \f  v \|_{L^2( \Hb)} \| \f d\| _{L^{10}(\f L^{10})}^2 + \| \f e \| _{L^{5/4}(\f L^{5/4})} \| \f d\| _{L^{10}( L^{10})} \big)\, .
\end{split}\label{Tl}%
\end{align}
We can further estimate the norm of $\f e$ (see~\eqref{e2}) by
\begin{align*}
\| \f e\|_{L^{5/4}(\f L^{5/4})}& \leq \| \t \f d \|_{L^{5/4}(\f L^{5/4})}+ \|( \f v \cdot \nabla ) \f d\|_{L^{5/4}(\f L^{5/4})} + \| \sk v \f d\|_{L^{5/4}(\f L^{5/4})} \\
& \leq c \| \t \f d \|_{L^{4/3}(\f L^2)}+ c \| \f v\|_{L^\infty ( \Le)} \| \f d\|_{L^2(\f W^{1, 6})} +  c \| \f v\|_{L^2(\Hb)} \| \f d\|_{L^\infty(\f L^6)}\, .
\end{align*}
All this shows that $\f T^L\in {L^{10/9}(0,T;\f L^{10/9})}$.
\end{proof}
Our main result is

\begin{theorem}[Existence of weak solutions]\label{thm:main}
Let $\Omega$ be a bounded domain of class $\C^{2}$, assume \eqref{con}, and let the free energy potential $F$ fulfil the assumptions~\eqref{regF}, \eqref{Lambda1}, \eqref{coerc1}, and \eqref{freeenergy}. For given initial data $ \f v_0 \in \Ha $, $ \f d_0\in  \Hb $ (such that $\f d_1 = 0$)  and right-hand side $ \f g\in L^2 ( 0,T; (\V)^*)$, there exists a weak solution to the Ericksen--Leslie system~\eqref{eq:strong}  with
\eqref{abkuerzungen}, \eqref{anfang}
in the sense of Definition~\ref{defi:weak}.
\end{theorem}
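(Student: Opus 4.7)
The proof proceeds by a simultaneous Galerkin approximation of both equations in~\eqref{eq:strong}, followed by a priori estimates, compactness, and passage to the limit.

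\textbf{Step 1 (Galerkin approximation).} I choose two Galerkin bases: $\{\f w_i\}_{i\in\N} \subset \V \cap \Hc$ consisting of Stokes eigenfunctions, and $\{\f y_i\}_{i\in\N} \subset \Hc$ consisting of the eigenfunctions of the strongly elliptic operator $-\di(\f \Lambda : \nabla \cdot)$ with homogeneous Dirichlet boundary conditions; assumptions~\eqref{Lambdasym} and~\eqref{Lambdaellip} guarantee that this operator is self-adjoint and coercive, so that a countable orthonormal basis of eigenfunctions exists in $\Le$ which is simultaneously orthogonal in $\f H^1_0$ and in $\Hc$. Since $\f d_1 = 0$ by assumption, I make the ansatz $\f v_n(t) = \sum_{i=1}^n \alpha_i^n(t)\f w_i$, $\f d_n(t) = \sum_{i=1}^n \beta_i^n(t)\f y_i$ in the Galerkin-projected equations, which yields a system of ODEs for the coefficients whose right-hand side is locally Lipschitz thanks to the $\C^2$-regularity~\eqref{regF} of $F$; Peano's theorem produces a local solution on a maximal interval $[0,T_n)$.

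\textbf{Step 2 (Energy estimate and a priori bounds).} Testing the Galerkin velocity equation with $\f v_n$ and the Galerkin director equation with $\f q_n$, and using the crucial cancellation~\eqref{identi} between the Ericksen stress and the convective term $((\f v_n \cdot \nabla)\f d_n, \f q_n)$, I obtain formally
\begin{equation*}
\tfrac{1}{2}\tfrac{\de}{\de t}\|\f v_n\|_{\Le}^2 + \tfrac{\de}{\de t}\mathcal{F}(\f d_n) + D_n = \langle \f g, \f v_n\rangle\,,
\end{equation*}
where $D_n$ is a quadratic form in $((\nabla \f v_n)_{\sym}, \f e_n)$ with $\f e_n = \t \f d_n + (\f v_n \cdot \nabla)\f d_n - (\nabla \f v_n)_{\skw}\f d_n$. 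The dissipativity conditions~\eqref{con} render $D_n$ coercive in these variables; using the director equation to write $\f e_n = -\gamma \f q_n - \lambda (\nabla \f v_n)_{\sym}\f d_n$, this delivers control of both $\|\f v_n\|_{L^2(\V)}$ and $\|\f q_n\|_{L^2(\Le)}$. Together with Gronwall's lemma and the coercivity~\eqref{coerc1}, I obtain uniform bounds on $\f v_n$ in $L^\infty(\Ha)\cap L^2(\V)$, on $\f d_n$ in $L^\infty(\Hb)$, and on $\f q_n$ in $L^2(\Le)$. Inserting these into the representation~\eqref{QQQQ} and invoking the growth bounds~\eqref{FSh}-\eqref{FcoercE}, Lemma~\ref{lem:nir}, and the Legendre--Hadamard ellipticity~\eqref{Lambdaellip}, I upgrade this to a uniform $L^2(0,T;\Hc)$-bound on $\f d_n$, which also extends $T_n$ to $T$.

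\textbf{Step 3 (Time derivatives and compactness).} Mimicking the bounds of Proposition~\ref{propprop} at the discrete level yields uniform bounds $\t \f v_n \in L^2(0,T;(\f H^2\cap\V)^*)$ and $\t \f d_n \in L^{4/3}(0,T;\Le)$. Banach--Alaoglu provides a (not relabelled) subsequence converging weakly-$*$ to a pair $(\f v,\f d)$ with the regularity~\eqref{weakreg}, and the Aubin--Lions lemma upgrades this to strong convergence $\f v_n \to \f v$ in $L^2(0,T;\Ha)$ and $\f d_n \to \f d$ in $L^2(0,T;\f H^1)$; a further subsequence guarantees pointwise almost everywhere convergence of $\f d_n$ and $\nabla \f d_n$ on $\Omega\times(0,T)$.

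\textbf{Step 4 (Passage to the limit).} Most terms in~\eqref{eq:velo}-\eqref{eq:dir} pass to the limit by standard arguments combining strong $L^2$-convergence of $\f v_n$ and $\nabla \f d_n$ with weak convergences from Step~3; in particular the Leslie tensor, which is at most quadratic in $\f d_n$ times $\nabla \f v_n$, is handled by pairing the strongly convergent $\f d_n$ in $L^p$ for sufficient large $p$ (Lemma~\ref{lem:nir}) with the weakly convergent $\nabla \f v_n$ in $L^2$. The main obstacle is identifying the limit of $\f q_n$: the principal part $-\di(\f \Lambda : \nabla \f d_n)$ passes to the limit by weak convergence in $L^2(\Le)$, while for the nonlinear remainders $\pat{F}{\f h}(\f d_n, \nabla \f d_n)$ and $\pat{^2 F}{\f S \partial \f h}(\f d_n, \nabla \f d_n):\nabla \f d_n^T$ I will invoke Vitali's theorem: the almost everywhere convergence from Step~3 combined with the growth bounds~\eqref{FSh}-\eqref{FcoercE} and the higher-summability of $\nabla \f d_n$ in $L^{\gamma_1}$ and $\f d_n$ in $L^{\gamma_2}$ provided by Lemma~\ref{lem:nir} (the exponents $\gamma_1<10/3$, $\gamma_2<10$ are tailored precisely to yield uniform integrability of these nonlinear expressions in $L^2$). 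Finally, the initial conditions~\eqref{iniweak} are recovered from the weak time continuity in Step~3 and the convergence of the projected initial data in $\Ha$ and $\Hb$.
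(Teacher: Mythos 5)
Your proposal follows essentially the same route as the paper: a simultaneous Galerkin approximation with Stokes eigenfunctions and eigenfunctions of $-\di(\f\Lambda:\nabla\cdot)$, the energy identity obtained by testing with $\f v_n$ and $\f q_n$, the coercivity of $\f q_n$ to upgrade to the $L^2(0,T;\Hc)$ bound, Aubin--Lions compactness, and identification of the nonlinear limits via pointwise convergence together with the growth conditions (the paper uses Lebesgue's dominated convergence where you invoke Vitali, an equivalent device given the exponents $\gamma_1<10/3$, $\gamma_2<10$). The only step you gloss over that the paper treats at some length is the $n$-uniform bound on the initial energy $\F(R_n\f d_0)$, which must be secured before Gronwall is applied and which follows from the growth conditions together with the uniform boundedness of $R_n$ on $\Hb$.
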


Let us note that one may also handle a more general right-hand side $\f g= \f g_1 + \f g_2$ with $\f g_1 \in L^1( 0, T; (\Ha)^*)$ and $ \f g_2 \in L^2 ( 0,T; (\V)^*)$
(see Tartar~\cite[Chapter 20]{tartarnav} or Simon~\cite{simon}).
For the sake of simplicity, however, we neglect $\f g_1$.

\begin{remark}\label{rem:bound}
A non-homogeneous boundary condition for $\f d$, i.e., $\f d_1 \neq 0$, can be dealt with by a standard transformation as follows.
Let $ \f d_0 \in \He(\Omega)$, $\f d_1 \in \f H ^{3/2}( \partial \Omega)$ and assume that the trace of $\f d_0$ equals $\f d_1$. As $ \Omega$ is of class $\C^{2}$, there exists a linear continuous extension operator $ \mathcal{S}: \f H^{3/2}(\partial \Omega) \ra \f H^2 ( \Omega)$ that is the right-inverse of the trace operator (see  Wloka \cite[Thm.~8.8]{wlokaeng}).
Therefore, we can replace $\f d$ in \eqref{eq:strong} by $\f d -  \mathcal{S} \f d_1$ that satisfies a homogeneous boundary condition.
The regularity of $\f d_1$ and thus of $\mathcal{S} \f d_1$ is sufficient to estimate all the terms appearing in addition.
\end{remark}

\section{Galerkin approximation and proof of the main result}\label{Sec:Proof}
In this section, we prove the main result (Theorem~\ref{thm:main}) via convergence of a Galerkin approximation. The proof is divided into the following steps:
We first (Section~\ref{sec:dis}) introduce the Galerkin scheme and deduce local-in-time existence of a solution to the approximate problem. We then (Section~\ref{sec:energy}) derive a priori estimates and conclude that solutions to the approximate problem exist globally in time. The crucial part is dealt with in Section~\ref{sec:conv}, where we
use the a priori estimates to extract a weakly convergent subsequence of the sequence of approximate solutions. We also prove weak convergence of a subsequence of the sequence of time derivatives of the approximate solutions. This implies strong convergence in a suitable norm and allows us to identify the initial values. Strong convergence is a prerequisite to handle the nonlinear variational derivative of the free energy.  Finally, we can identify the weak limits as a solution to the director  and the Navier--Stokes-like equation.

\subsection{Galerkin approximation}\label{sec:dis}
For the approximation of the Navier--Stokes-like equation, we follow
Temam~\cite[p.~27f.]{temam} and use a Galerkin basis consisting of  eigenfunctions $\f w_1, \, \f w_2 , \, \ldots \in \f H^2\cap \V $ of the Stokes operator (with homogeneous Dirichlet boundary condition). As is well known, the eigenfunctions form an orthogonal basis in $\Ha$ as well as in $\V$ and in $\f H^2\cap \V $. Let $W_n= \spa \left \{ \f w_1, \dots , \f w _n\right \} $ ($n\in \N$)
and let $P_n : \Ha \longrightarrow W_n$ denote the
$\Ha$-orthogonal projection onto $W_n$. The restriction of $P_n$ on $\V$ and $\f H^2\cap \V $ is nothing than the $\V$- and $(\f H^2\cap \V)$-orthogonal projection onto $W_n$, respectively, such that
\begin{equation*}
\|P_n\|_{\mathcal{L}(\Ha)} = \|P_n\|_{\mathcal{L}(\V)} = \|P_n\|_{\mathcal{L}(\f H^2\cap \V)} = 1\, .
\end{equation*}
Remark that $\Omega$ is of class $\C^2$.
Hence, there exists  $c>0$ such that for all $n\in \N$ and $\f v \in \f H^2 \cap \V$
\begin{equation*}
\|P_n \f v\|_{\f H^2} \le c \|\f v\|_{\f H^2} \, ,
\end{equation*}
see, e.g.,  M\'{a}lek et al.~\cite[Appendix, Thm.~4.11 and Lemma~4.26]{malek} together with Boyer and Fabrie~\cite[Prop.~III.3.17]{boyer}.

For the approximation of the director equation, we use a Galerkin basis consisting of eigenfunctions $\f z_1 , \, \f z_2 , \, \ldots$
of the differential operator corresponding to the boundary value problem
\begin{align}\label{boundaryvalueproblem}
\begin{split}
- \di \left( \f \Lambda : \nabla \f z \right) &= \f h \quad\text{in } \Omega \, , \\
\f z &= 0\quad\text{on } \partial\Omega \,.
\end{split}
\end{align}
In view of the assumptions (\ref{Lambda1}) on $\f \Lambda$, the above problem  is a symmetric strongly elliptic system that possesses a unique weak solution $\f z \in \f H^1_0$ for any $\f h \in \f H^{-1}$ (see, e.g., Chipot~\cite[Thm.~13.3]{chipot}). Its solution operator is thus a compact operator in $\f L^2$. Hence there exists an orthogonal basis of eigenfunctions $\f z_1 , \, \f z_2 , \, \ldots $ in $\Le$.

\begin{remark}\label{rem:Lambda1}
 If $\f \Lambda $ depends on $\f x$, the claims of the above paragraph are not true any more. The boundary-value problem~\eqref{boundaryvalueproblem} would  not be well-posed under this generalised condition.
Consider a tensor $\f \Lambda \in \C^{0,1}(\ov \Omega; \R^{3\times 3 \times 3 \times 3})$,
and let the strong Legendre--Hadamard condition~\eqref{Lambdaellip}  as well as the symmetry condition~\eqref{Lambdasym} be fulfilled uniformly in $\f x$.
Under this conditions, the well-posedness can be achieved by considering the boundary-value problem shifted by a multiplicative of the identity,
\begin{align}
\begin{split}
- \di \left( \f \Lambda : \nabla \f z \right) + \zeta \f z  &= \f h \quad\text{in } \Omega \, , \\
\f z &= 0\quad\text{on } \partial\Omega \,.
\end{split}\label{bvp2}
\end{align}
Here $\zeta$ is a possibly large constant.
This boundary-value problem is for a sufficiently large constant $\zeta$ well-posed (see, e.g., Chipot~\cite[Prop.~13.1 and Prop.~13.2]{chipot}) and the solution operator is again compact. The existence of eigenfunctions to this system follows by the same arguments as above.

Consequently, the Galerkin space has to be adapted for an $\f x$-dependent tensor $\f \Lambda(\f x)$
and is the span of eigenfunctions of the solution operator to the boundary-value problem~\eqref{bvp2}.
However, \eqref{identi} as well as the variational derivative have to be adapted and in particular additional terms with the derivative of $\f \Lambda $ with respect to $\f x$ occur.
It is, therefore, open whether the main result also applies to this generalisation.

\end{remark}

Moreover, the problem~\eqref{boundaryvalueproblem} is $H^2$-regular (see, e.g., Morrey~\cite[Thm.~6.5.6]{morrey} and recall that $\Omega$ is of class $\mathcal{C}^{2}$), i.e., for any $\f h \in \f L^2$ the solution $\f z$ is in $\f H^2 \cap \f H^1_0$ and there exists a constant $c_{\f \Lambda}>0$ such that
\begin{equation}\label{H2-Lambda}
\|\f z\|_{\f H^2} \le c_{\f \Lambda} \, \|\di \left( \f \Lambda : \nabla \f z \right)\|_{\f L^2}
\end{equation}
for any $\f z \in \f H^2 \cap \f H^1_0$.
This also shows that $\|\di \left( \f \Lambda : \nabla \cdot \right)\|_{\f L^2}$, $\|\Delta \cdot\|_{\f L^2}$, $\|\cdot\|_{\f H^2}$ are equivalent norms on $\f H^2 \cap \f H^1_0$ and that the eigenfunctions are  in
$\f H^2 \cap \f H^1_0$.

Again the eigenfunctions form an orthogonal basis in $\Le$. Let $ Z_n : = \spa \left \{  \f z_1, \dots , \f z_n \right \} $ ($n\in \N$) and assume $\|\f z_i\|_{\f L^2} = 1$ for $i=1, 2, \dots$. Then
\begin{equation*}
R_n : (\Hc)^* \longrightarrow Z_n \, , \quad
R_n  \f f:= \sum_{i=1}^n \langle \f f, \f z_i \rangle \f z_i
\end{equation*}
is well-defined and its restriction to $\Le$ is the
$\Le$-orthogonal projection onto $Z_n$ such that
\begin{equation*}
\|R_n\|_{\mathcal{L}(\Le)} = 1 \, .
\end{equation*}
If we equip $\Hb$ and $\Hc$ with the inner product
\begin{equation}\label{innerinner}
\left(\left( \f \Lambda : \nabla \f v  \right): \nabla \f w \right)
\text{ and }
\left(\nabla \cdot \left( \f \Lambda : \nabla \f v \right) , \nabla \cdot
\left( \f \Lambda : \nabla \f w \right) \right),
\end{equation}
respectively, then the induced norms are equivalent to the standard norms
(see also Chipot~\cite[Prop.~13.1]{chipot})
and the restriction of $R_n$ to $\Hb$ and $\Hc$ is the $\Hb$- and $\Hc$-orthogonal projection onto $Z_n$, respectively. The corresponding operator norms then equal $1$ and, with respect to the standard norms, we thus find that there is a constant $c>0$ such that for all $n\in \N$
\begin{equation*}
 \|R_n\|_{\mathcal{L}(\Hb)} \le c \, , \quad \|R_n\|_{\mathcal{L}(\Hc)} \le c\, .
\end{equation*}

Let $n\in \N$ be fixed. As usual, we consider the ansatz
\begin{align*}
\f v_n ( t)  = \sum_{i=1}^n v_n^i(t)\f w_i, \quad \f d_n(t) = \sum_{i=1}^nd_n^i(t)\f z_i \, .
\end{align*}
Our approximation reads as
\begin{subequations}\label{eq:dis}
\begin{align}
\begin{split}
( \partial_t {\f v_n}, \f w  ) +( ( \f v_n \cdot \nabla ) \f v _n, \f w  ) -(\nabla \f d_n ^ T  \f q_n , \f w  )+ \left (\f T^L_n: \nabla \f w \right )&= \left \langle  \f g   , \f w\right  \rangle,\\
\f v_n(0) &= P_n \f v_0 \,,
\end{split}
\label{vdis}\\
\begin{split}
( \partial_t \f d_n , \f z) + ( ( \f v_n \cdot \nabla ) \f d_n , \f z )
- (\skn{v} \f d _n , \f z ) + \lambda ( \syn v \f d_n , \f z)
+ \gamma (\f q_n, \f z ) &=0 \,,
\\ \f d_n(0)&= R_n\f d_0\,,
\end{split}\label{ddis}
\end{align}
for all $ \f w \in W_n$ and $ \f z \in Z_n$,
where
\begin{align}
\f q_n : = R_n \left ( \frac{\delta \mathcal{F}}{\delta \f d }( \f d_n )\right ) = R_n \left (\pat{F}{\f h}( \f d_n ,\nabla\f d _n ) - \di\pat{F}{\f S}( \f d_n , \nabla \f d_n)\right )\, ,\label{qn}
\end{align}
and
\begin{align}
\begin{split}
\f T^L_n :={}&  \mu_1 (\f d_n \cdot  \syn v \f d_n )\f d_n \otimes \f d_n+\mu_4 \syn v  - \gamma(\mu_2+\mu_3) \left (\f d_n \otimes \f q_n    \right )_{\sym} \\&
- \left (\f d_n \otimes \f q_n  \right)_{\skw} + (\mu_5+\mu_6 -\lambda(\mu_2+\mu_3))  \left ( \f d_n \otimes \syn v \f d_n \right)_{\sym}\, .
\end{split}\label{lesliedis}
\end{align}%
\end{subequations}%
The approximation $\f T^L_n$ of the Leslie stress tensor $\f T^L$ (see~\eqref{Leslie}) relies upon replacing the term $\f e$, in which unfortunately the time derivative of $\f d$ occurs, by
$\f e = -\lambda \sy v \f d - \gamma \f q$,
which follows from \eqref{dir} and \eqref{e2}. Formally, this leads with \eqref{constanten} to
\begin{align}\label{leslieleslie}
\begin{split}
\f T^L =&{}  \mu_1 (\f d\cdot \sy  v \f d )\f d \otimes \f d +\mu_4 \sy v
- \gamma(\mu_2+\mu_3)\left (\f d \otimes \f q    \right )_{\sym}
\\&
- \left (\f d \otimes \f q  \right )_{\skw} + (\mu_5+\mu_6 -\lambda(\mu_2+\mu_3))  \left ( \f d\otimes  \sy v\f d\right )_{\sym} .
\end{split}
\end{align}

It is standard to prove existence locally in time of solutions to the approximate problem in the sense of Carath\'{e}odory, i.e., of solutions that are absolutely continuous with respect to time (see, e.g.,
Hale~\cite[Chapter~I, Thm.~5.2]{hale}). Of course, the existence interval may depend on $n$. Global-in-time existence on $[0,T]$, however, follows later directly from suitable a priori estimates.

\subsection{Energy inequality and a priori estimates}\label{sec:energy}
In what follows, we derive an energy inequality and appropriate a priori  estimates for the approximate solutions.

\begin{proposition}
\label{lem:1}
Let the assumptions of Theorem~\ref{thm:main} be fulfilled and
let $\{(\f v_n , \f d_n)\} $ be a  solution to \eqref{eq:dis}. Then there holds for almost all $t$
\begin{align} \begin{split}
&\frac{\mathrm{d}}{\mathrm{d} t}  \left( \frac{1}{2}\|\f v_n\|_{\Le}^2 +  \F(\f d_n) \right) +  \mu_1\left\|\f d_n\cdot \syn v \f d_n\right\|_{L^2}^2 + \mu_4 \|\syn v \|_{\Le}^2 \\
&\quad +   (\mu_5+\mu_6- \lambda (\mu_2+\mu_3))\|\syn{v}\f d_n\|_{\Le}^2
+ \gamma \|\f q_n\|_{\Le}^2 \\
&= \langle \f g, \f v_n\rangle + \left ( \gamma (\mu_2+\mu_3) -\lambda \right ) (\f q_n , \syn v \f d_n )\, .
\end{split}\label{entro1}
\end{align}
\end{proposition}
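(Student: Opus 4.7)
The plan is to test the velocity equation \eqref{vdis} with $\f w = \f v_n\in W_n$ and the director equation \eqref{ddis} with $\f z = \f q_n\in Z_n$ (admissible by \eqref{qn}), and to sum the two resulting scalar identities.

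In \eqref{vdis} tested with $\f v_n$, the time-derivative term gives $\tfrac{1}{2}\frac{\de}{\de t}\|\f v_n\|_{\Le}^2$, the convective term vanishes by solenoidality of $\f v_n$, and the coupling $-(\nabla\f d_n^{T}\f q_n, \f v_n)$ rewrites via $\nabla\f d_n\,\f v_n = (\f v_n\cdot\nabla)\f d_n$ as $-((\f v_n\cdot\nabla)\f d_n, \f q_n)$, which will cancel the transport term of the director identity once the two are added. In \eqref{ddis} tested with $\f q_n$, the crucial step is the chain-rule identity $(\t\f d_n, \f q_n) = \frac{\de}{\de t}\F(\f d_n)$: since $\t\f d_n\in Z_n$ and $R_n$ is the $\Le$-orthogonal projection onto $Z_n$, self-adjointness gives $(\t\f d_n, \f q_n) = (\t\f d_n, \delta\F/\delta\f d(\f d_n))$, and \eqref{qdefq} together with differentiation of $t\mapsto\F(\f d_n(t))$ identifies this with $\frac{\de}{\de t}\F(\f d_n)$.

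The core computation is the expansion of $(\f T^L_n : \nabla\f v_n)$ using \eqref{lesliedis} and the Frobenius identities of Section~\ref{sec:not}: for symmetric $\f A=\f A^{T}$ one has $\f A:\nabla\f v_n = \f A:\syn v$, for skew $\f A=-\f A^{T}$ one has $\f A:\nabla\f v_n = \f A:\skn v$, and $\f a\otimes\f b:\f A = \f a\cdot\f A\f b$. This yields
\begin{align*}
\left(\f T^L_n:\nabla\f v_n\right)
 &= \mu_1\|\f d_n\cdot\syn v\,\f d_n\|_{L^2}^2 + \mu_4\|\syn v\|_{\Le}^2 \\
 &\quad + (\mu_5+\mu_6-\lambda(\mu_2+\mu_3))\|\syn v\,\f d_n\|_{\Le}^2 \\
 &\quad - \gamma(\mu_2+\mu_3)(\f q_n, \syn v\,\f d_n) + (\skn v\,\f d_n, \f q_n),
\end{align*}
where the last sign is produced by $\f d_n\cdot\skn v\,\f q_n = -\f q_n\cdot\skn v\,\f d_n$ (skew-symmetry of $\skn v$). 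Summing this with the identity obtained from \eqref{ddis}, the two $(\skn v\,\f d_n, \f q_n)$ contributions cancel exactly, the symmetric cross terms combine into $(\lambda-\gamma(\mu_2+\mu_3))(\syn v\,\f d_n, \f q_n)$, and moving this quantity to the right-hand side reproduces precisely \eqref{entro1}; the identity holds almost everywhere since the Galerkin solution is absolutely continuous in time.

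The main obstacle is the careful sign-chasing for the skew cross term: the approximation \eqref{lesliedis} has been tailored so that the antisymmetric coupling between $\f d_n$ and $\f q_n$ in the Leslie stress, when paired against $\nabla\f v_n$, produces exactly $+(\skn v\,\f d_n, \f q_n)$, the sign required to cancel $-(\skn v\,\f d_n, \f q_n)$ arising from \eqref{ddis}. No regularity or nonlinear analysis is needed, as the entire computation reduces to pointwise algebraic identities.
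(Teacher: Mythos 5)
Your proposal is correct and follows essentially the same route as the paper: test \eqref{vdis} with $\f v_n$, test \eqref{ddis} with $\f q_n$ (admissible since $R_n$ maps into $Z_n$), use the $\Le$-orthogonality of $R_n$ together with the chain rule to identify $(\t\f d_n,\f q_n)$ with $\frac{\de{}}{\de t}\F(\f d_n)$, and expand $\f T^L_n:\nabla\f v_n$ so that the skew cross terms cancel and the symmetric cross terms collect into $(\gamma(\mu_2+\mu_3)-\lambda)(\f q_n,\syn v\f d_n)$. All sign computations and cancellations match the paper's proof of Proposition~\ref{lem:1}.
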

\begin{remark}\label{rem:pa}
If Parodi's relation~\eqref{parodi} is assumed to hold for the constants appearing (see also~\eqref{constanten}) then the constant in front of the last term on the right-hand side of~\eqref{entro1} vanishes.
\end{remark}
\begin{proof}[Proof of Proposition~\ref{lem:1}]
We recall that the approximate solution $\{(\f v_n, \f d_n)\}$ is absolutely continuous in time. In order to derive the estimate asserted, we test \eqref{vdis} with $\f v_n$, \eqref{ddis} with $\f q_n$, and add both equations. This leads to
\begin{equation*}
\begin{split}
&\frac{1}{2}\br{} \| \f v_n \|_{\Le}^2  + ( \t \f d_n , \f q_n) - \langle \nabla \f d_n^T \f q_n , \f v_n \rangle  + ( ( \f v_n \cdot \nabla )\f d_n, \f q_n)
\\
&\quad + ( \f T_n^L: \nabla \f v_n)
- (\skn{v} \f d_n , \f q_n)    + \gamma \| \f q_n\|^2_{\Le}
\\
& =  \langle \f g , \f v_n\rangle
- \lambda ( \syn v \f d_n , \f q_n)
\, .
\end{split}
\end{equation*}
Here we have employed that the convection term vanishes since $\f v_n$ is solenoidal. Moreover, the projection $R_n$ maps $\Le$ into $Z_n$, which  ensures that $\f q_n$ takes values in $Z_n$.

A straightforward calculation shows that
\begin{align}
\begin{split}
\br{} \mathcal{F}(\f d_n) &= \br{} \intet{ F(\f d_n, \nabla \f d_n)  } =\intet{ \partial_t F( \f d_n, \nabla \f d_n) }\\
&=  \inte{ \pat{F}{\f h}( \f d_n, \nabla \f d_n) \cdot \t \f d_n  + \pat{F}{\f S}( \f d_n, \nabla \f d_n) : \t \nabla \f d_n } \\
&=\intet{ \t \f d_n\cdot \left( \pat{F}{\f h} ( \f d_n, \nabla \f d_n) - \di \  \pat{F}{\f S}( \f d_n, \nabla \f d_n)\right)   }\\
& = \intet{ \t \f d_n \cdot R_n\left( \pat{F}{\f h}( \f d_n, \nabla \f d_n)  - \di \  \pat{F}{\f S}( \f d_n, \nabla \f d_n)\right)    } = ( \t \f d_n , \f q_n)\, .
\end{split}
\label{Fd3}
\end{align}
In the last but one step, we used that $R_n$ is the $\Le$-orthogonal projection onto $Z_n$.

For the term with the Leslie stress tensor, we find with rules recapitulated in Section~\ref{sec:not} that
\begin{align*}
\f T^L_n : \nabla\f v_n ={}&  \mu_1 ( \f d_n \cdot  \syn{v}\f d_n)^2  +\mu_4 | \syn v|^2
\\
&+  (\mu_5+\mu_6-\lambda(\mu_2+\mu_3))| \syn{v} \f d_n |^2
 \\
&-\gamma( \mu_2+\mu_3)\, \f q_n \cdot \syn v\f d _n +  \f q_n\cdot  \skn v \f d_n \, .
\end{align*}
The assertion now follows from putting all together.
\end{proof}

An essential step in our analysis is the following energy inequality that is an adaptation of Lin and Liu~\cite[Lemma~1]{linliu3} to general free energy functions considered here. In order to ensure the dissipative character of the system, the constants
appearing are supposed to fulfil the constraints~\eqref{con}.

\begin{corollary}\label{cor1}
Under the assumptions of Theorem~\ref{thm:main} there exist $\alpha,\beta, c>0$ such that for any solution $\{(\f v_n , \f d_n)\} $ to \eqref{eq:dis} the energy inequality
\begin{align}
\begin{split}
&\frac{\text{\emph{d}}}{\text{\emph{d}} t}
\left( \frac{1}{2}\| \f v_n \|_{\Le}^2
+ \mathcal{F}(\f d_n) \right)
 + \mu_1\left \Vert \f d_n\cdot  \syn v \f d_n\right \Vert_{L^2}^2 \\
&\quad + \frac{\mu_4}{2} \|\syn v \|_{\Le}^2 + \alpha \|\syn{v}\f d_n\|_{\Le}^2
+ \beta  \|\f q_n\|_{\Le}^2  \leq c\| \f g\|_{\Vd}^2
\end{split}\label{entroin}
\end{align}
holds for almost all $t$.
\end{corollary}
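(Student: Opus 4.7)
The plan is to start from the energy identity \eqref{entro1} of Proposition~\ref{lem:1} and absorb the two right-hand side terms into the dissipative quantities on the left using Young's inequality together with the structural assumption \eqref{con}.

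For the forcing term, I would use that $\f v_n$ takes values in $\V$, so that $\di \f v_n = 0$ and the homogeneous Dirichlet condition yield $\|\nabla \f v_n\|_{\Le}^2 = 2\|\syn v\|_{\Le}^2$ (integrate $\int (\nabla \f v_n)^T:\nabla \f v_n$ by parts twice to get $\int (\di \f v_n)^2 = 0$). Hence
\begin{equation*}
|\langle \f g, \f v_n\rangle| \le \|\f g\|_{\Vd}\,\|\f v_n\|_{\V}
\le \tfrac{\mu_4}{2}\|\syn v\|_{\Le}^2 + c\,\|\f g\|_{\Vd}^2
\end{equation*}
by Young's inequality, which eats exactly half of the $\mu_4\|\syn v\|_{\Le}^2$ term on the left.

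For the mixed term, set $a:=\gamma(\mu_2+\mu_3)-\lambda$, $A:=\mu_5+\mu_6-\lambda(\mu_2+\mu_3)$, and recall $\gamma,A>0$ by \eqref{con}. Young's inequality with a parameter $\varepsilon>0$ gives
\begin{equation*}
|a|\,|(\f q_n, \syn v\f d_n)| \le \tfrac{|a|\varepsilon}{2}\|\syn v\f d_n\|_{\Le}^2 + \tfrac{|a|}{2\varepsilon}\|\f q_n\|_{\Le}^2 .
\end{equation*}
After subtracting this from the left-hand side, the remaining coefficients of $\|\syn v\f d_n\|_{\Le}^2$ and $\|\f q_n\|_{\Le}^2$ are $A-\tfrac{|a|\varepsilon}{2}$ and $\gamma-\tfrac{|a|}{2\varepsilon}$, respectively. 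Both can be made strictly positive simultaneously precisely when some $\varepsilon$ exists with $\tfrac{|a|}{2\gamma}<\varepsilon<\tfrac{2A}{|a|}$, i.e., when $a^2<4\gamma A$. This is exactly the last inequality in \eqref{con}. Choosing such an $\varepsilon$ yields constants $\alpha,\beta>0$.

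Putting the two bounds together in \eqref{entro1} produces \eqref{entroin} verbatim. The only nontrivial point is thus the compatibility of the two one-sided bounds on $\varepsilon$, and this is precisely what the dissipativity assumption \eqref{con} encodes; no further ingredients (no use of Parodi's relation, no regularity beyond what Proposition~\ref{lem:1} already provides) are needed.
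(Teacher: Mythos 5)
Your proposal is correct and follows essentially the same route as the paper: both start from \eqref{entro1}, absorb $\langle \f g,\f v_n\rangle$ into $\tfrac{\mu_4}{2}\|\syn v\|_{\Le}^2$ via Young's inequality, and absorb the mixed term $(\gamma(\mu_2+\mu_3)-\lambda)(\f q_n,\syn v\f d_n)$ using a weighted Young inequality whose admissible parameter range is nonempty exactly because of the last condition in \eqref{con} (the paper parametrises this by a $\delta\in(0,1)$ with $a^2\le 4\delta^2\gamma A$, which is equivalent to your window $\tfrac{|a|}{2\gamma}<\varepsilon<\tfrac{2A}{|a|}$). The only cosmetic difference is that you derive the identity $\|\nabla\f v_n\|_{\Le}^2=2\|\syn v\|_{\Le}^2$ for solenoidal fields directly, where the paper simply invokes Korn's first inequality.
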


\begin{proof}
The assertion is an immediate consequence of Proposition~\ref{lem:1}.

We first observe with Korn's first inequality~(see, e.g., McLean~\cite[Thm.~10.1]{mclean}) and Young's inequality that
\begin{align*}
\langle \f g, \f v_n \rangle &\leq \| \f g\|_{\Vd} \| \f v_n \|_{\V} \leq c\| \f g\|_{\Vd}  \|\syn v\|_{\Le}
\\
& \leq \frac{c^2}{2\mu_4} \| \f g\|_{\Vd}  + \frac{\mu_4	}{2}\|\syn v\|_{\Le}\, .
\end{align*}

In a second step, we find with (\ref{con}) that there is $\delta \in (0,1)$ such that
\begin{align*}
( \gamma( \mu_2+\mu_3) -\lambda )^2 \le
4 \delta^2 \gamma (\mu_5+\mu_6 - \lambda(\mu_2+\mu_3))
\end{align*}
and thus
\begin{align*}
&( \gamma(\mu_2+\mu_3)  -\lambda  ) \left (\f q_n, \syn v \f d_n \right )
 \\
&\leq
\left|  \gamma(\mu_2+\mu_3)-\lambda \right|
\| \f q_n\|_{\Le} \| \syn{v} \f d_n \|_{\Le}
\\
&\le
2 \delta \sqrt{\gamma (\mu_5+\mu_6 - \lambda(\mu_2+\mu_3))}
 \| \f q_n\|_{\Le} \| \syn{v} \f d_n\|_{\Le}
 \\
 &\le
 \delta \gamma  \| \f q_n\|_{\Le}^2 +
\delta (\mu_5+\mu_6 - \lambda(\mu_2+\mu_3)) \| \syn{v} \f d_n\|_{\Le}^2
\, .
\end{align*}
Taking $\alpha, \beta$ appropriately proves the assertion.
\end{proof}

\begin{corollary}[A priori estimates I]
\label{cor2}
Under the assumptions of Theorem~\ref{thm:main}  there holds
\begin{align}
\begin{split}
&\frac{1}{2}\| \f v _n \|_{L^\infty(\f L^2)}^2 +  \sup_{t}
\mathcal{F}(\f d_n(t))
 + \mu_1\left \Vert \f d_n \cdot \syn v \f d_n \right \Vert_{L^2(L^2)}^2
\\
&\quad + \frac{\mu_4}{2} \|\syn v \|_{L^2(\Le)}^2+ \alpha \|\syn v\f d_n\|_{L^2(\Le)}^2
+ \beta  \|\f q_n\|_{L^2(\Le)}^2
\\
& \leq  c \left (\|\f v_0\|_{\Le}^2
+  \| \f g\|_{L^2(\Vd)}^2+ \| \f d_0 \|^6_{\Hb}+1\right ).
\end{split}
\label{entrodiss}
\end{align}
on the time interval of existence.
\end{corollary}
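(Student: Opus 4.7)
The plan is to integrate the differential energy inequality of Corollary~\ref{cor1} in time and control the two initial-energy contributions. Since $(\f v_n,\f d_n)$ is absolutely continuous on its maximal interval of existence, integrating \eqref{entroin} from $0$ to $t$ yields
\begin{equation*}
\tfrac12\|\f v_n(t)\|_{\Le}^2 + \mathcal{F}(\f d_n(t)) + \int_0^t [\,\dots\,]\,ds
\;\le\;
\tfrac12\|\f v_n(0)\|_{\Le}^2 + \mathcal{F}(\f d_n(0)) + c\int_0^t \|\f g(s)\|_{\Vd}^2\,ds \, ,
\end{equation*}
where $[\,\dots\,]$ collects the nonnegative dissipative terms appearing on the left of \eqref{entroin}. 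The initial velocity term is immediate, since $\f v_n(0)=P_n \f v_0$ and $\|P_n\|_{\mathcal{L}(\Ha)}=1$ give $\|\f v_n(0)\|_{\Le}\le \|\f v_0\|_{\Le}$.

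The main technical step is to bound $\mathcal{F}(R_n \f d_0)\le c(1+\|\f d_0\|_{\Hb}^6)$ by means of the growth assumptions~\eqref{freeenergy}. Because $\partial^2 F/\partial \f S^2 \equiv \f \Lambda$ is constant, a second-order Taylor expansion of $F(\f h,\cdot)$ around the zero matrix is exact,
\begin{equation*}
F(\f h,\f S)=F(\f h,0)+\frac{\partial F}{\partial \f S}(\f h,0):\f S+\tfrac12\,\f S:\f \Lambda:\f S \, .
\end{equation*}
Integrating \eqref{FcoercE} along the segment from $\f 0$ to $\f h$ gives $|F(\f h,0)|\le c(1+|\f h|^{\gamma_2/2+1})$, and integrating \eqref{FSh} at $\f S=0$ gives $|(\partial F/\partial \f S)(\f h,0)|\le c(1+|\f h|^{\gamma_3+1})$. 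Young's inequality then produces the pointwise bound
\begin{equation*}
F(\f h,\f S)\le c\bigl(1+|\f S|^2+|\f h|^{\gamma_2/2+1}+|\f h|^{2(\gamma_3+1)}\bigr) \, .
\end{equation*}
The admissible ranges $\gamma_2<10$ and $\gamma_3<2$—the latter following from \eqref{beta} and $\gamma_1<10/3$—imply $\gamma_2/2+1<6$ and $2(\gamma_3+1)<6$, so the Sobolev embedding $\Hb\hookrightarrow \f L^6$ in three dimensions together with the uniform bound $\|R_n\|_{\mathcal{L}(\Hb)}\le c$ yields $\mathcal{F}(R_n \f d_0)\le c(1+\|\f d_0\|_{\Hb}^6)$.

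To arrive at \eqref{entrodiss} in the separated form (with $\|\f v_n\|_{L^\infty(\Le)}^2$ and $\sup_t \mathcal{F}(\f d_n(t))$ as independent terms on the left), we use that $\f d_n|_{\partial\Omega}=0$ (since $\f d_1=0$), so Poincar\'e's inequality together with the coercivity~\eqref{coerc1} provides a lower bound $\mathcal{F}(\f d_n(t))\ge -c$ once a small multiple of $\|\nabla \f d_n\|_{\Le}^2$ is absorbed into the coercive part; if this absorption is not directly available for the given constants, it is obtained by adding a small multiple of the identity $\tfrac12\tfrac{d}{dt}\|\f d_n\|_{\Le}^2=-\lambda(\syn v \f d_n,\f d_n)-\gamma(\f q_n,\f d_n)$ (from testing \eqref{ddis} with $\f d_n$) to the energy inequality and closing by Grönwall's lemma. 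Taking the supremum over $t\in[0,T]$ then delivers \eqref{entrodiss}. The main obstacle is precisely the initial-energy estimate of the previous paragraph: the exponents in \eqref{freeenergy}--\eqref{beta} are tuned so that every power of $|\f h|$ arising from the growth conditions stays strictly below the three-dimensional Sobolev exponent $6$, which is exactly what permits the uniform control of the director initial energy by $\|\f d_0\|_{\Hb}^6$.
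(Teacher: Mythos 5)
Your proof is correct, and it reaches the crucial bound $\mathcal{F}(R_n\f d_0)\le c(1+\|\f d_0\|_{\Hb}^6)$ by a genuinely different route than the paper. The paper compares $\F(R_n\f d_0)$ with $\F(\f d_0)$ via the fundamental theorem of calculus along the segment $s\mapsto sR_n\f d_0+(1-s)\f d_0$, estimating the resulting integrals $I_1,\dots,I_4$ with \eqref{FcoercE}, \eqref{FSh}, and \eqref{LambdaS} to get $|\F(R_n\f d_0)-\F(\f d_0)|\le c(\|R_n\f d_0\|_{\Hb}^5+\|\f d_0\|_{\Hb}^5+1)\|R_n\f d_0-\f d_0\|_{\Hb}$; you instead exploit the constancy of $\partial^2F/\partial\f S^2$ to write an \emph{exact} second-order Taylor expansion in $\f S$ about $\f S=0$ and derive the pointwise bound $F(\f h,\f S)\le c(1+|\f S|^2+|\f h|^{\gamma_2/2+1}+|\f h|^{2(\gamma_3+1)})$, with all exponents of $|\f h|$ strictly below $6$ by \eqref{beta}, so Sobolev embedding and $\|R_n\|_{\mathcal{L}(\Hb)}\le c$ finish the job. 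Your version is shorter and more transparent for the upper bound, but two things the paper's route buys should be noted: (i) it yields the convergence $\F(R_n\f d_0)\to\F(\f d_0)$ as a by-product, which the paper records in the remark following the corollary; (ii) it transfers verbatim to the nonlinear principal part of Section~\ref{sec:more}, whereas your exact Taylor expansion must be replaced by the integral-remainder form (still workable, since $|\f\Theta|\le c_{\f\Theta}$ keeps the remainder of order $|\f S|^2$, but this is an extra step you would need to supply there). Finally, your explicit discussion of how to separate $\|\f v_n\|_{L^\infty(\Le)}^2$ and $\sup_t\mathcal{F}(\f d_n(t))$ on the left-hand side---which requires a lower bound on $\mathcal{F}(\f d_n(t))$ from \eqref{coerc1}, possibly via the Gr\"onwall argument obtained by testing \eqref{ddis} with $\f d_n$---is a point the paper's one-line proof glosses over and only settles later in Corollary~\ref{cor:apri2}; including it is a genuine improvement in rigour, not a detour.
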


\begin{proof}
Integrating \eqref{entroin} with respect to time implies
\begin{align*}
\begin{split}
&\frac{1}{2}\| \f v _n \|_{L^\infty(\f L^2)}^2 +  \sup_{t}
\mathcal{F}(\f d_n(t))
 + \mu_1\left \Vert \f d_n \cdot \syn v \f d_n \right \Vert_{L^2(L^2)}^2  \\ &\quad + \frac{\mu_4}{2} \|\syn v \|_{L^2(\Le)}^2+ \alpha \|\syn v\f d_n\|_{L^2(\Le)}^2
+ \beta  \|\f q_n\|_{L^2(\Le)}^2
\\
&\leq c\left ( \|\f v_0\|_{\Le}^2 +
\mathcal{F}(R_n \f d_0) + \| \f g\|_{L^2(\Vd)}^2\right )
\end{split}
\end{align*}
since $\f v_n(0) = P_n \f v_0$ and $\f d_n(0) = R_n \f d_0$.
Thus it remains to estimate $\F( R_n \f d_0)$ independently of~$n$.

In view of the smoothness of the free energy potential $F$, we find with the fundamental theorem of calculus
\begin{align*}
\begin{split}
F( R_n\f d_0 , \nabla R_n\f d_0) = F( \f d_0, \nabla \f d_0) &+ \int_0^1 \pat{F}{\f h}(\f d_0^n(s), \nabla\f d_0^n(s) ) \de s \cdot ( R_n\f d_0- \f d_0) \\& + \int_0^1\pat{F}{\f S}(\f d_0^n(s), \nabla\f d_0^n(s) ) \de s  :\nabla( R_n\f d_0- \f d_0)\, ,
\end{split}
\end{align*}
where $\f d_0^n(s) := s R_n\f d_0 + (1-s) \f d_0$ ($s \in [0,1]$).
Applying the fundamental theorem of calculus again to the second term gives
\begin{align*}
\begin{split}
F( R_n\f d_0 , \nabla R_n\f d_0) ={}& F( \f d_0, \nabla \f d_0) + \int_0^1 \pat{F}{\f h}(\f d_0^n(s), \nabla\f d_0^n(s) ) \de s \cdot ( R_n\f d_0- \f d_0)
\\&  + \int_0^1 \pat{F}{\f S}(0, 0 ) \de s  :\nabla( R_n\f d_0- \f d_0)
\\& + \int_0^1 \int_0^1  \pat{^2 F}{\f S \partial\f h}(\tau \f d_0^n(s), \tau \nabla\f d_0^n(s) )\de \tau \cdot  \f d_0^n(s)  \de s :  \nabla  (R_n \f d_0- \f d_0)
\\&
+\int_0^1 \int_0^1  \pat{^2 F}{\f S^2}(\tau \f d_0^n(s), \tau \nabla\f d_0^n(s) )\de \tau : \nabla   \f d_0^n(s)  \de s :  \nabla  (R_n \f d_0- \f d_0)\,.
\end{split}
\end{align*}

Integrating over $\Omega$
gives
\begin{align*}
\begin{split}
\F(R_n \f d_0) ={}&   \F( \f d_0 ) + \int_0^1 \left (\pat{F}{\f h}(\f d_0^n(s), \nabla\f d_0^n(s) ) ,  R_n\f d_0- \f d_0\right ) \de s
\\&  + \int_0^1 \left (\pat{F}{\f S}(0, 0 )  ;\nabla( R_n\f d_0- \f d_0)\right ) \de s
\\& + \int_0^1 \int_0^1  \left ( \pat{^2 F}{\f S \partial\f h}(\tau \f d_0^n(s), \tau \nabla\f d_0^n(s) ) \cdot  \f d_0^n(s)   ;  \nabla  (R_n \f d_0- \f d_0)\right ) \de \tau\de s
\\&
+\int_0^1 \int_0^1 \left (  \pat{^2 F}{\f S^2}(\tau \f d_0^n(s), \tau \nabla\f d_0^n(s) ) : \nabla   \f d_0^n(s)   ;  \nabla  (R_n \f d_0- \f d_0)\right )\de \tau \de s \\
={}&\F( \f d_0 )+I_1+I_2+I_3+I_4
\, ,
\end{split}
\end{align*}

such that
\begin{equation*}
\left|\F(R_n \f d_0) - \F( \f d_0 )\right| \le  |I_1| + |I_2| + |I_3|+ |I_4| \, .
\end{equation*}

Let us consider the term~$I_1$. Invoking the growth conditions~\eqref{FcoercE} and Young's inequality yields
\begin{align*}
\begin{split}
|I_1| \leq{}& \intet{ \int_0^1 \left|
\pat{F}{\f h}(\f d_0^n(s), \nabla\f d_0^n(s) )\right|  \de s \,
| R_n\f d_0- \f d_0| } \\
\leq{}& C_{\f h}  \intet{ \int_0^1 \left( | \nabla \f d_0^n(s)|^{\gamma_1/2} +| \f d_0^n(s)|^{\gamma_2/2} +1   \right) \de s \, |R_n\f d_0- \f d_0 |}\,.
\end{split}
\end{align*}
Using  Young's and H\"{o}lder's inequality as well as Sobolev's embedding theorem thus leads to
\begin{align*}
|I_1|
\leq {}&c \intet{ \int_0^1 \left( | \nabla \f d_0^n(s)|^{5/3} +| \f d_0^n(s)|^{5} +1   \right) \de s \, |R_n\f d_0- \f d_0 |}\\
\leq {}&  c\int_{\Omega }
\Big (  | \nabla R_n\f d_0|^{5/3}+ |R_n\f d_0|^5
+  | \nabla \f d_0|^{5/3} + | \f d_0|^5  + 1 \Big )|R_n\f d_0- \f d_0 |\de \f x
\\
\leq {}&c \left ( \| \nabla R_n \f d_0 \| _{\Le} ^2 + \| R_n \f d_0\|_{\f L^6}^6 + \| \nabla  \f d_0 \| _{\Le} ^2 + \|  \f d_0\|_{\f L^6}^6+1 \right )^{5/6}
\|R_n\f d_0- \f d_0 \|_{\f L^6}
\\
\leq{}& c \left ( \|  R_n \f d_0 \| _{\Hb} ^6 + \|  \f d_0\|_{\Hb}^6+1 \right )^{5/6}
\|R_n\f d_0- \f d_0 \|_{\Hb}
\end{align*}

Due to the continuity of the derivative of $F$ with respect to $\f S$, the term $I_2$ can be estimated by
\begin{align*}
| I_2| \leq \left |\pat{F}{\f S}(0, 0 ) \right | \| \nabla( R_n\f d_0- \f d_0)\|_{\f L^1} \leq c \| R_ n \f d_0 - \f d_0 \|_{\Hb} \,.
\end{align*}

For the term $I_3$, we get with~\eqref{FSh} and estimating $\tau$ by 1 that
\begin{align*}
|I_3| \leq{}& \intet{ \int_0^1 \left|
\pat{^2 F}{\f S\partial\f h}(\f d_0^n(s), \nabla\f d_0^n(s) )\right||  \f d_0^n(s)|  \de s \,
|\nabla( R_n\f d_0- \f d_0)| } \\
\leq{}& C_{\f S\f h}  \intet{ \int_0^1 \left( \left (| \nabla \f d_0^n(s) |^{\gamma_1/2-1} + |  \f d_0^n(s)|^{\gamma_3 }+1\right )  |  \f d_0^n(s)|   \right) \de s \, |\nabla (R_n\f d_0- \f d_0) |}\,.
\end{align*}
Since $\gamma_1 $ can be estimated by $10/3$ and $\gamma_3$ by $2$, we obtain
 \begin{align*}
|I_3|
\leq{}& c  \intet{ \int_0^1 \left (| \nabla \f d_0^n(s) |^{2/3}|  \f d_0^n(s)|  + |  \f d_0^n(s)|^{3 }+1\right )   \de s \, |\nabla (R_n\f d_0- \f d_0) |}\,.
\end{align*}
An application of Young's and H\"older's inequality gives
\begin{align*}
|I_3| \leq{}&c \intet{ \int_0^1 \left( | \nabla \f d_0^n(s)|^1  +| \f d_0^n(s)|^{3} +1   \right) \de s \, |\nabla(R_n\f d_0- \f d_0 )|}\\
 \leq{}& c \left ( \| R_n\f d _0 \| _{\Hb}^2 +\| R_n\f d _0 \| _{\f L^6}^6  + \|  \f d_0\|_{\Hb }^2 + \|  \f d_0\|_{\f L^6 }^6+1 \right )^{1/2}
\|R_n\f d_0- \f d_0 \|_{\Hb}\\
 \leq{}& c \left ( \|  R_n \f d_0 \| _{\Hb} ^6 + \|  \f d_0\|_{\Hb}^6+1 \right )^{1/2}
\|R_n\f d_0- \f d_0 \|_{\Hb}
\,.
\end{align*}

For the term $I_4$, we observe with~\eqref{LambdaS} that
\begin{equation*}
|I_4| \le c \| \f d_0^n(s) \|_{\Hb}\| R_n\f d_0  - \f d_0 \|_{\Hb}
\le c \left( \| R_n\f d_0 \|_{\Hb}+ \| \f d_0 \|_{\Hb}\right) \| R_n\f d_0  - \f d_0 \|_{\Hb}\, .
\end{equation*}

Finally, we come up with
\begin{align*}
\left|\F(R_n \f d_0) - \F(\f d_0)\right|  \le c \left( \| R_n \f d_0\|_{\Hb}^5 + \| \f d_0\|_{\Hb}^5 +1\right)\| R_n\f d_0  - \f d_0 \|_{\Hb}\, .
\end{align*}
With similar estimates as before, one can show that
\begin{equation*}
|\F(\f d_0)| \le c \left(\| \f d_0\|_{\Hb}^6 +1 \right)
\end{equation*}
if $\f d_0 \in \Hb$.
Due to the boundedness of the projection $R_n$ as an operator in $\Hb$, we see that $\F(R_n \f d_0)$ is bounded (independently of $n$) as long as $\f d_0 \in \Hb $.
This proves the assertion.
\end{proof}
\begin{remark}
The proof of Corollary~\ref{cor2} also shows that $\F(R_n \f d_0)$ converges to $\F ( \f d_0)$. The proof of Corollary~\ref{cor2} is done in such a way that it is still valid for the modified assumptions of Section~\ref{sec:more}.
\end{remark}

Due to the strong coercivity assumptions on the free energy potential $F$ and its derivatives, we are able to deduce a priori estimates in spaces with rather strong norms.

\begin{corollary}[A priori estimates II]
\label{cor:apri2}
Under the assumptions of Theorem~\ref{thm:main}  there is a constant $C>0$ such that for all $n\in\N$
\begin{align}
\begin{split}
&\| \f v _n \|_{L^\infty(\f L^2)}^2 +  \| \f d_n\|_{L^{\infty}(\Hb)}^2   + \left \Vert \f d_n \cdot  \syn v \f d_n \right \Vert_{L^2(L^2)}^2 + \|\syn v \|_{L^2(\Le)}^2 \\
&+ \|\syn v\f d_n\|_{L^2(\Le)}^2
+  \| \Delta \f d_n\|_{L^2(\Le)}^2
\leq C
\, .
\end{split}
\label{apri2}
\end{align}
\end{corollary}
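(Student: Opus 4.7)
All bounds except $\|\f d_n\|_{L^{\infty}(\Hb)}^2$ and $\|\Delta\f d_n\|_{L^2(\Le)}^2$ are already contained in Corollary~\ref{cor2}, so the plan is to extract these two missing estimates. For the first, the coercivity assumption~\eqref{coerc1} yields
\[
\eta_1\|\nabla\f d_n(t)\|_{\Le}^2 \le \F(\f d_n(t)) + \eta_2\|\f d_n(t)\|_{\Le}^2 + \eta_3|\Omega| ,
\]
and since $\f d_n(t)\in Z_n\subset \f H^1_0$ one may invoke Poincar\'e's inequality to absorb the $\|\f d_n\|_{\Le}^2$-term into the left-hand side. Together with the uniform bound on $\F(\f d_n(t))$ from Corollary~\ref{cor2}, this gives the desired $L^\infty$-in-time bound on $\|\f d_n\|_{\Hb}$.

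For the $\f H^2$-bound, the key idea is to turn the $L^2(\Le)$-control on $\f q_n$ into a control on $\di(\f\Lambda:\nabla\f d_n)$. Expanding $\di\pat{F}{\f S}(\f d_n,\nabla\f d_n)$ by the chain rule as in~\eqref{QQQQ} and using $\pat{^2 F}{\f S^2}\equiv \f\Lambda$, one writes
\[
\f q_n = R_n\!\left(\pat{F}{\f h}(\f d_n,\nabla\f d_n) - \pat{^2 F}{\f S\partial\f h}(\f d_n,\nabla\f d_n):\nabla\f d_n^{\,T}\right) - R_n\bigl(\di(\f\Lambda:\nabla\f d_n)\bigr) .
\]
Crucially, because $Z_n$ is spanned by eigenfunctions of the operator~\eqref{boundaryvalueproblem} and $\f d_n\in Z_n$, one has $\di(\f\Lambda:\nabla\f d_n)\in Z_n$ and hence $R_n$ acts as the identity on this term. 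Combining with the $\f H^2$-regularity~\eqref{H2-Lambda}, which makes $\|\Delta\cdot\|_{\Le}$ equivalent to $\|\di(\f\Lambda:\nabla\cdot)\|_{\Le}$ on $\f H^2\cap\f H^1_0$, yields the pointwise-in-time inequality
\[
\|\Delta\f d_n\|_{\Le} \le c\!\left(\|\f q_n\|_{\Le} + \left\|\pat{F}{\f h}(\f d_n,\nabla\f d_n)\right\|_{\Le} + \left\|\pat{^2 F}{\f S\partial\f h}(\f d_n,\nabla\f d_n):\nabla\f d_n^{\,T}\right\|_{\Le}\right).
\]

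Squaring, integrating in time, and invoking the growth conditions~\eqref{FSh}, \eqref{FcoercE} reduces the task to controlling integrals of $|\nabla\f d_n|^{\gamma_1}$, $|\f d_n|^{\gamma_2}$, and the mixed term $|\f d_n|^{2\gamma_3}|\nabla\f d_n|^2$. Hölder's inequality in space (with exponents tied to the definition~\eqref{beta} of $\gamma_3$, which is precisely what makes the mixed term split into $\|\f d_n\|_{\f L^{\gamma_2}}^{2\gamma_3}\|\nabla\f d_n\|_{\f L^{\gamma_1}}^2$) followed by Hölder in time reduces everything to bounds on $\|\nabla\f d_n\|_{L^{\gamma_1}(\f L^{\gamma_1})}$ and $\|\f d_n\|_{L^{\gamma_2}(\f L^{\gamma_2})}$, which Lemma~\ref{lem:nir} expresses in terms of $\|\f d_n\|_{L^2(\Hc)}^{\theta_i}\|\f d_n\|_{L^\infty(\Hb)}^{\gamma_i-\theta_i}$.

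The main obstacle, and the reason for the strict upper bounds $\gamma_1<10/3$ and $\gamma_2<10$ in~\eqref{freeenergy}, is ensuring $\theta_1,\theta_2<2$ in these interpolation identities. Once this is secured, Young's inequality lets one absorb the resulting powers $\|\f d_n\|_{L^2(\Hc)}^{\theta_i}$ (equivalent to $\|\Delta\f d_n\|_{L^2(\Le)}^{\theta_i}$ by the norm equivalence above) into the left-hand side of the squared-and-integrated inequality, because the bound on $\|\f d_n\|_{L^\infty(\Hb)}$ obtained in the first paragraph is already available. The remaining terms are controlled by the right-hand side of~\eqref{entrodiss}, yielding the uniform constant~$C$ and thereby proving~\eqref{apri2}.
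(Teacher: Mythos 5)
Your second step (the bound on $\|\Delta\f d_n\|_{L^2(\Le)}$) is essentially the paper's argument: the paper likewise isolates $\di(\f\Lambda:\nabla\f d_n)$ inside $\f q_n$, uses that $R_n$ leaves this term unchanged (since $\f d_n\in Z_n$) while only shrinking the remaining terms, invokes the $\f H^2$-regularity~\eqref{H2-Lambda}, and then closes via the growth conditions, Lemma~\ref{lem:nir} and Young's inequality exactly as you describe; your H\"older splitting of the mixed term is equivalent to the paper's Young-inequality splitting, both hinging on the definition~\eqref{beta} of $\gamma_3$.

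The gap is in your first step. Poincar\'e's inequality gives $\|\f d_n(t)\|_{\Le}^2\le C_P\|\nabla\f d_n(t)\|_{\Le}^2$, so ``absorbing'' $\eta_2\|\f d_n(t)\|_{\Le}^2$ into $\eta_1\|\nabla\f d_n(t)\|_{\Le}^2$ requires $\eta_2 C_P<\eta_1$. Assumption~\eqref{coerc1} only asks for $\eta_2\ge 0$, with no smallness relative to $\eta_1$; if $\eta_2 C_P\ge\eta_1$, the resulting lower bound $\F(\f d_n)\ge(\eta_1-\eta_2C_P)\|\nabla\f d_n\|_{\Le}^2-\eta_3|\Omega|$ is vacuous and your argument yields nothing. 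The paper avoids this by using the equation, not only the energy: testing~\eqref{ddis} with $\f d_n$ (the convection and skew-symmetric terms vanish) gives $\tfrac12\|\f d_n(t)\|_{\Le}^2\le\tfrac12\|\f d_0\|_{\Le}^2+\int_0^t\big(c\,\|\syn v\f d_n\|_{\Le}^2+c\,\|\f q_n\|_{\Le}^2+c\,\|\nabla\f d_n\|_{\Le}^2\big)\de s$. Substituting this into the coercivity estimate and the time-integrated energy inequality places the troublesome term $\eta_2\|\f d_n(t)\|_{\Le}^2$ under a time integral of $\|\nabla\f d_n\|_{\Le}^2$, and Gronwall's lemma then delivers the $L^\infty(\Hb)$ bound for arbitrary $\eta_2\ge0$. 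Without this step (or an added smallness hypothesis on $\eta_2$), the $L^\infty(\Hb)$ bound --- on which your subsequent $\f H^2$ estimate also relies --- is not established.
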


\begin{proof}
The coercivity condition~\eqref{coerc1} implies
\begin{align}
 \mathcal{F}(\f d_n(t)) =  \intet{F( \f d_n(t), \nabla \f d_n(t)) }& \geq \eta_1 \|\nabla \f d_n(t)\|_{\Le}^2 -\eta_2\| \f d_n(t)\|_{\Le}^2 - \eta_3|\Omega| \,
 \label{coercivity}
\end{align}
The director equation~\eqref{ddis} tested with $\f d_n$ gives
\begin{align*}
( \t \f d_n , \f d_n ) + ( (\f v_n \cdot \nabla)\f d_n , \f d_n  ) - ( (\nabla\f v_n)_{\skw} \f d_n , \f d_n ) + \lambda ( (\nabla \f v_n)_{\sym} \f d_n , \f d_n ) + \gamma ( \f q_n, \f d_n) =0\,.
\end{align*}
Integration in time and using $ 2(\t \f d_n , \f d_n)=  \t \| \f d_n\|_{\Le}^2 $, the vanishing divergence of $\f v_n$ and the skew-symmetry of $\skn v$ shows that
\begin{align*}
 \frac{1}{2}\| \f d_n (t)\|_{\Le}^2 +  \inttet{\left (\lambda ( (\f v_n)_{\sym} \f d_n , \f d_n ) + \gamma ( \f q_n, \f d_n)\right )}= \frac{1}{2}\| \f d_0\|_{\Le}^2 \,.
\end{align*}
The norm of the director can thus be estimated using H\"older's and Young's inequality and in the second step Poincar\'e's inequality
\begin{align*}
\| \f d_n (t)\|_{\Le}^2 \leq{}& \inttet{\left (\frac{\alpha
}{2\eta_2} \| (\nabla \f v_n)_{\sym} \f d_n \|_{\Le}^2 + \frac{\beta}{2
\eta_2}\| \f q_n \|_{\Le}^2  + c \| \f d_n\|_{\Le}^2
\right )}   + \| \f d_0\|_{\Le}^2 \\ \leq{}& \inttet{\left (\frac{\alpha
}{2\eta_2} \| (\nabla \f v_n)_{\sym} \f d_n \|_{\Le}^2 + \frac{\beta}{2
\eta_2}\| \f q_n \|_{\Le}^2  + c \|\nabla  \f d_n\|_{\Le}^2
\right )}  + \| \f d_0\|_{\Hb}^2  \,.
\end{align*}
Applying this estimate together with~\eqref{coercivity} and~\eqref{entroin} gives
\begin{align*}
\frac{1}{2}\| \f v _n (t)\|_{\f L^2}^2 +  \eta_1 \| \nabla \f d_n(t)\|_{\Le}^2   &+ \mu_1\inttet{\left \Vert \f d_n \cdot  \syn v \f d_n \right \Vert_{L^2}^2}  \\ &+ \frac{1}{2} \inttet{\left (\mu_4 \|\syn v \|_{\Le}^2+ \alpha \|\syn v\f d_n\|_{\Le}^2
+ \beta  \|\f q_n\|_{\Le}^2\right )}  \\ \leq{}& \frac{1}{2}\|\f v_0\|_{\Le}^2
+c \left ( \| \f g\|_{L^2(\Vd)}^2+ \| \f d_0 \|^6_{\Hb}+1\right )\\&+ \eta_2 c \inttet{\| \nabla \f d_n\|_{\Le}^2 }+\eta_2 \| \f d_0\|_{\Hb}^2  +\eta_3 |\Omega|\,.
\end{align*}
The lemma of Gronwall and taking the supremum over all  $t\in[0,T]$ shows the estimate
\begin{align}
\begin{split}
\frac{1}{2}\| \f v _n \|_{L^\infty(\f L^2)}^2 +  \eta_1 \| \nabla \f d_n\|_{L^{\infty}(\Le)}^2   &+ \mu_1\left \Vert \f d_n \cdot  \syn v \f d_n \right \Vert_{L^2(L^2)}^2  \\ &+ \frac{\mu_4}{2} \|\syn v \|_{L^2(\Le)}^2+ \frac{\alpha}{2} \|\syn v\f d_n\|_{L^2(\Le)}^2
+ \frac{\beta}{2}  \|\f q_n\|_{L^2(\Le)}^2  \\ \leq{}& c\left (\|\f v_0\|_{\Le}^2 + \mathcal{F}(\f d_0)
+  \| \f g\|_{L^2(\Vd)}^2+ \| \f d_0 \|^6_{\f H^{1}} + 1\right )e^{cT} \\ =:{}& K\,
\end{split}\label{apri1}
\end{align}
and thus, the boundedness of the director in the $L^\infty(0,T; \He)$ norm.


Moreover, \eqref{qn}, \eqref{QQQQ}, and \eqref{abcabc} leads to
\begin{align*}
|\f q_n|^2\geq \frac{1}{2} \left |R_n \di \left(  \f \Lambda :  \nabla \f d_n\right) \right  |^2 - 4 \left |R_n \pat{^2 F}{\f S\partial \f h}(\f d_n, \nabla \f d_n) : \nabla \f d_n^T \right |^2 - 4 \left|R_n \pat{F}{\f h} (\f d_n, \nabla \f d_n)\right |^2 \, .
\end{align*}
The orthogonality properties of $R_n$ then imply
\begin{align*}
\|\f q_n\|_{\Le}^2\geq \frac{1}{2} \left\|\di \left(  \f \Lambda :  \nabla \f d_n\right) \right\|_{\Le}^2 - 4 \left\|\pat{^2 F}{\f S\partial \f h}(\f d_n, \nabla \f d_n) : \nabla \f d_n^T \right\|_{\Le}^2 - 4 \left\|\pat{F}{\f h} (\f d_n, \nabla \f d_n)\right\|_{\Le}^2 \, .
\end{align*}
The estimates (\ref{H2-Lambda}), \eqref{FSh}, and~\eqref{FcoercE} show that
there are constants $c_1, c > 0$ such that
for all $t$
\begin{align}
\|\f q_n\|_{\Le}^2 \geq & c_1 \| \Delta \f d_n\|_{\Le}^2 -
c \left ( \|\nabla \f d_n\|_{\f L^{\gamma_1 }}^{\gamma_1}
+ \left \lVert | \f d_n |^{\gamma_3}
| \nabla \f d_n |\right \rVert _{\Le}^2
+ \|\nabla \f d_n\|_{\f L^2}^2
+ \| \f d_n \|_{\f L^{\gamma_2} }^{\gamma_2}+1 \right) .\label{qnabsch}
\end{align}
With (\ref{beta}), an application of Young's inequality (recall that $\gamma_3 = 0$ if $\gamma_1 = 2$),
\begin{align*}
\left \lVert | \f d_n |^{\gamma_3} | \nabla \f d_n |\right \rVert _{\Le}^2
& \leq  \frac{2}{\gamma_1 } \| \nabla \f d_n \|_{\f L^{\gamma_1}}^{\gamma_1} + \frac{\gamma_1-2}{\gamma_1} \| \f d_n \| _{\f L^{2\gamma_3  \gamma_1/(\gamma_1-2)}}^{2\gamma_3\gamma_1/(\gamma_1-2)} \\
& =  \frac{2}{\gamma_1 } \| \nabla \f d_n \|_{\f L^{\gamma_1} }^{\gamma_1}
+ \frac{\gamma_1-2}{\gamma_1} \| \f d_n \|_{\f L^{\gamma_2}}^{\gamma_2} \, ,
\end{align*}
and
\begin{equation*}
\|\nabla \f d_n\|_{\f L^2}^2 \le c \|\nabla \f d_n\|_{\f L^{\gamma_1}}^2
\le 1 + c \|\nabla \f d_n\|_{\f L^{\gamma_1}}^{\gamma_1} \, ,
\end{equation*}
we come up with
\begin{align*}
\|\f q_n\|_{\Le}^2 \geq  c_1 \| \Delta \f d_n\|_{\Le}^2 -
c \left ( \|\nabla \f d_n\|_{\f L^{\gamma_1 }}^{\gamma_1}
+ \| \f d_n \|_{\f L^{\gamma_2}}^{\gamma_2}
+ 1 \right)
\end{align*}
and thus with
\begin{align*}
\|\f q_n\|_{L^2(\Le)}^2 \geq  c_1 \| \Delta \f d_n\|_{L^2(\Le)}^2 -
c \left ( \|\nabla \f d_n\|_{L^{\gamma_1}(\f L^{\gamma_1})}^{\gamma_1}
+ \| \f d_n \|_{L^{\gamma_2}(\f L^{\gamma_2})}^{\gamma_2}
+ 1 \right) .
\end{align*}
Lemma~\ref{lem:nir} yields
\begin{align*}
\|\nabla \f d_n\|_{L^{\gamma_1}(\f L^{\gamma_1})}^{\gamma_1}
\le c \, \|\Delta \f d_n\|_{L^2(\f L^2)}^{\theta_1}
\|\f d_n\|_{L^\infty (\Hb)}^{\gamma_1 - \theta_1} \, , \;
\theta_1 = \frac{3}{2} \, (\gamma_1 - 2)
 \, ,
\\
\|\f d_n\|_{L^{\gamma_2}(\f L^{\gamma_2})}^{\gamma_2}
\le c \, \|\Delta \f d_n\|_{L^2(\f L^2)}^{\theta_2}
\|\f d_n\|_{L^\infty (\Hb)}^{\gamma_2 - \theta_2} \, , \;
\theta_2 = \frac{1}{2} \, (\gamma_2 - 6)
\, .
\end{align*}
Young's inequality now leads to
\begin{align*}
\|\f q_n\|_{L^2(\Le)}^2 \geq  \frac{c_1}{2}\, \| \Delta \f d_n\|_{L^2(\Le)}^2 - c \left ( \|\f d_n\|_{L^\infty (\Hb)}^p
+ 1 \right)
\end{align*}
with
\begin{equation*}
p= 2\max\left( \frac{\gamma_1-\theta_1}{2-\theta_1},
\frac{\gamma_2-\theta_2}{2-\theta_2}\right)
= 2\max\left( \frac{6-\gamma_1}{10-3\gamma_1},
\frac{6 + \gamma_2}{10 - \gamma_2}\right) < \infty
.
\end{equation*}
Because of \eqref{apri1}, we already know that $\eta_1 \|\f d_n\|_{L^\infty (\Hb)}^2$ is bounded by $K$. Hence, \eqref{entrodiss} leads to
\begin{align*}
&\frac{1}{2}\| \f v _n \|_{L^\infty(\f L^2)}^2 +  \eta_1 \| \f d_n\|_{L^{\infty}(\Hb)}^2   + \mu_1\left \Vert \f d_n \cdot  \syn v \f d_n \right \Vert_{L^2(L^2)}^2 + \frac{\mu_4}{2} \|\syn v \|_{L^2(\Le)}^2 \\
&+ \frac{\alpha}{2} \|\syn v\f d_n\|_{L^2(\Le)}^2
+ \frac{c_1\beta}{4}\, \| \Delta \f d_n\|_{L^2(\Le)}^2
\leq K
+ c \beta \left ( \left(\frac{K}{\eta_1}\right)^{\frac{p}{2}} + 1 \right)
\, ,
\end{align*}
which proves the assertion.
\end{proof}

The above a priori estimates ensure that the approximate solutions exist on $[0,T]$ (see, again, Hale~\cite[Chapter~I, Thm.~5.2]{hale}).

We are now going to estimate the time derivative of $\f d_n$ and $\f v_n$ in appropriate norms.

\begin{proposition}
Under the assumptions of Theorem~\ref{thm:main}  there is a constant $C>0$ such that for all $n\in\N$
\begin{align}
 \| \partial_t \f v_n\|_{L^{2}((\f H^2 \cap \V)^*)} \le C \, .\label{vnt}
\end{align}
\end{proposition}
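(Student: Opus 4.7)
The strategy is to proceed by duality. Fix an arbitrary $\f \varphi \in \f H^2 \cap \V$. Its projection $P_n \f \varphi \in W_n$ is an admissible test function in the discrete momentum equation~\eqref{vdis}. Since $\partial_t \f v_n(t) \in W_n$ and $P_n$ is the $\Ha$-orthogonal projection, one has $(\partial_t \f v_n, P_n \f \varphi) = (\partial_t \f v_n, \f \varphi)$, which yields
\begin{align*}
(\partial_t \f v_n, \f \varphi) = -((\f v_n \cdot \nabla) \f v_n, P_n \f \varphi) + (\nabla \f d_n^T \f q_n, P_n \f \varphi) - (\f T_n^L : \nabla P_n \f \varphi) + \langle \f g, P_n \f \varphi \rangle .
\end{align*}
The plan is to bound each of the four terms on the right-hand side by $\|\f \varphi\|_{\f H^2}$ times a function of $t$ that is bounded in $L^2(0,T)$ uniformly in $n$. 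The key ingredients are (i) the bound $\|P_n \f \varphi\|_{\f H^2} \le c \|\f \varphi\|_{\f H^2}$ recorded in Section~\ref{sec:dis}, (ii) the three-dimensional Sobolev embeddings $\f H^2(\Omega) \hookrightarrow \f L^\infty(\Omega) \cap \f W^{1,6}(\Omega)$, and (iii) the a priori bounds from Corollary~\ref{cor:apri2} together with $\f q_n \in L^2(0,T;\Le)$ supplied by~\eqref{entrodiss}.

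For the convective term I would estimate $|((\f v_n \cdot \nabla) \f v_n, P_n \f \varphi)| \le \|\f v_n\|_{\Le} \|\nabla \f v_n\|_{\Le} \|P_n \f \varphi\|_{\f L^\infty}$, which yields an $L^2$-in-time factor since $\f v_n \in L^\infty(0,T;\Ha) \cap L^2(0,T;\V)$. The Ericksen-type coupling satisfies $|(\nabla \f d_n^T \f q_n, P_n \f \varphi)| \le \|\nabla \f d_n\|_{\Le} \|\f q_n\|_{\Le} \|P_n \f \varphi\|_{\f L^\infty}$, where $\nabla \f d_n \in L^\infty(0,T;\Le)$ and $\f q_n \in L^2(0,T;\Le)$. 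For the Leslie stress I would decompose $\f T_n^L$ into the five summands in~\eqref{lesliedis} and exploit $\f d_n \in L^\infty(0,T;\f L^6)$ together with $\f d_n \cdot \syn v \f d_n, \, \syn v \f d_n, \, \syn v, \, \f q_n \in L^2(0,T;\Le)$ to conclude that $\f T_n^L \in L^2(0,T;\f L^{6/5})$; pairing with $\nabla P_n \f \varphi \in \f L^6$ via H\"older then gives the required bound. The forcing contribution is immediate since $\f g \in L^2(0,T;\Vd)$ and $\|P_n \f \varphi\|_{\V} \le c \|\f \varphi\|_{\f H^2}$.

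Taking the supremum over $\f \varphi$ with $\|\f \varphi\|_{\f H^2 \cap \V} \le 1$ produces a pointwise-in-$t$ bound $\|\partial_t \f v_n(t)\|_{(\f H^2 \cap \V)^*} \le h_n(t)$ with $\|h_n\|_{L^2(0,T)} \le C$ independent of $n$, which is the claim. I do not expect serious obstacles here: the only delicate issue is ensuring that the nonlinear Leslie and Ericksen contributions, both of which carry the variational derivative $\f q_n$ from~\eqref{qn}, stay uniformly bounded, and this is precisely what the new $L^2(\Hc)$ control of $\f d_n$ in~\eqref{apri2} combined with the $L^2(\Le)$ control on $\f q_n$ from~\eqref{entrodiss} deliver.
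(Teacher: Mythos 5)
Your proposal is correct and follows essentially the same route as the paper: test with $P_n\f\varphi$, use the uniform bound $\|P_n\f\varphi\|_{\f H^2}\le c\|\f\varphi\|_{\f H^2}$ together with the embeddings $\f H^2\hookrightarrow \f L^\infty\cap\f W^{1,6}$, and estimate the convective term, the term $\nabla\f d_n^T\f q_n$, the five Leslie summands (in $L^2(0,T;\f L^{6/5})$), and the forcing exactly as in the paper, using \eqref{apri1}, \eqref{apri2} and the $L^2(\Le)$ bound on $\f q_n$. The only cosmetic difference is that the $L^2(0,T;\Hc)$ control of $\f d_n$ you invoke at the end is not actually needed here; $\|\f d_n\|_{L^\infty(\Hb)}$ and $\|\f q_n\|_{L^2(\Le)}$ suffice.
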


\begin{proof}
Recall that $P_n$ is the $(\f H^2 \cap \V)$-orthogonal projection onto $W_n$. We thus find with \eqref{vdis} for all $t\in [0,T]$ and all
$ \f\varphi \in \f H^2 \cap \V$
\begin{align*}
|\langle \t\f v_n , \f\varphi  \rangle|
={}& |(\t\f v_n , P_n\f\varphi)|
=  \left|
\langle \f g , P_n\f\varphi \rangle
-
\left(
 ( \f v_n \cdot \nabla ) \f v_n , P_n\f\varphi\right)
+\left(  \nabla \f d_n^T \f q _n , P_n\f\varphi\right)
- \left( \f T^L_n : \nabla P_n \f\varphi \right)
\right|
\\
\le{}& \|\f g\|_{\Vd} \|P_n \f\varphi \|_{\V}
+
\|( \f v_n \cdot \nabla ) \f v_n\|_{\f L^{1}}\|P_n \f \varphi \|_{\f L^{\infty}}
\\
&+ \|\nabla \f d_n^T \f q _n \|_{\f L^{1}}\|P_n \f \varphi \|_{\f L^{\infty}}
+ \| \f T^L_n \|_{\f L^{6/5}} \|\nabla P_n \f \varphi \|_{\f L^{6}} \, .
\end{align*}
Since $\f H^2$ is continuously embedded in $\f H^1$, $\f L^{\infty}$,
and $\f W^{1,6}$, we find
\begin{align*}
\|\t\f v_n\|_{(\f H^2 \cap \V)^*} \le c \left(
\|\f g\|_{\Vd} + \|( \f v_n \cdot \nabla ) \f v_n\|_{\f L^{1}}
+ \|\nabla \f d_n^T \f q _n \|_{\f L^{1}}
+ \| \f T^L_n \|_{\f L^{6/5}}  \right)
\end{align*}
and thus
\begin{align*}
\|\t\f v_n\|_{L^{2}((\f H^2 \cap \V)^*)} \le{}& c \left(
\|\f g\|_{L^{2}(\Vd)}
+ \|( \f v_n \cdot \nabla ) \f v_n\|_{L^{2}(\f L^{1})}
\right.
\\
& + \left. \|\nabla \f d_n^T \f q _n \|_{L^{2}(\f L^{1})}
+ \| \f T^L_n \|_{L^{2}(\f L^{6/5})}  \right).
\end{align*}

With H\"{o}lder's inequality and Lemma~\ref{cor:vreg}, we see that
\begin{equation*}
\|( \f v_n \cdot \nabla ) \f v_n\|_{L^{2}(\f L^{1})}
\le \|\f v_n\|_{L^\infty(\f L^{2})} \|\nabla \f v_n\|_{L^2(\f L^{2})}
\quad \text{and}
\quad\left \lVert \nabla \f d_n ^T\f q_n\right \rVert _{ L^{2}(\f L^{1})} \leq  \left \lVert \nabla \f d_n \right \rVert _{ L^\infty(\f L^ {2})} \left \lVert \f q_n \right \rVert _{ L^{ 2}(\Le )}
\, .
\end{equation*}
In view of \eqref{apri2}, \eqref{apri1}, and Korn's inequality, the terms on the right-hand sides of the foregoing estimates are bounded.

Finally, we observe with \eqref{lesliedis} the estimate
\begin{align*}
 \| \f T^L_n \|_{L^{2}(\f L^{6/5})}
 \le{}& c \left( \|(\f d_n\cdot  \syn v \f d_n )\f d_n \otimes \f d_n \|_{L^{{2}}( \f L^{{6}/{5}})}
 + \|\nabla \f v_n\|_{L^{2}( \f L^{{6}/{5}})}
\right.
\\
& +\left. \|\f d_n \otimes \f q_n\|_{L^{2}( \f L^{{6}/{5}})}
 +  \|  \f d_n \otimes  \syn v \f d_n\|_{L^{2}( \f L^{{6}/{5}})}
 \right),
\end{align*}
where (again with H\"{o}lder's inequality)
\begin{align*}
\|(\f d_n\cdot  \syn v \f d_n )\f d_n \otimes \f d_n \|_{L^{2}( \f L^{{6}/{5}})} &\leq \| \f d_n \cdot  \syn v \f d_n \|_{L^2( L^2)} \| \f d_n\|_{L^{\infty}(\f L^{6})}^2 \, ,\\
\|\f d _n \otimes \f q_n \|_{L^{{2}}( \f L^{{6/5}})} &\leq
c \|\f d _n \otimes \f q_n \|_{L^{{2}}( \f L^{{3/2}})} \leq
c \| \f d_n\|_{L^{\infty}(\f L^{6})} \| \f q_n \|_{L^2(\Le)} \, ,
\\
\| \f d_n \otimes  \syn v \f d_n\|_{L^{{2}}( \f L^{{6/5}})}
&\leq c
\| \f d_n \otimes  \syn v \f d_n\|_{L^{{2}}( \f L^{{3/2}})}
\\
&\leq  c\| \f d_n\|_{L^{\infty}(\f L^{6})} \| \syn v \f d_n \|_{L^2(\Le)}\, .
\end{align*}
which proves the assertion because of  \eqref{apri2} and \eqref{apri1}.
\end{proof}

\begin{proposition}
Under the assumptions of Theorem~\ref{thm:main}  there is a constant $C>0$ such that for all $n\in\N$
\begin{align}
 \| \partial_t \f d_n\|_{L^{4/3}(\Le)} \le C \, .\label{dtn}
\end{align}
\end{proposition}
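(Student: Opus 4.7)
The plan is to read off $\partial_t \f d_n$ directly from the discrete director equation~\eqref{ddis}. Since $\partial_t \f d_n(t) \in Z_n$ for almost every $t$ and $R_n$ denotes the $\Le$-orthogonal projection onto $Z_n$, testing~\eqref{ddis} against every $\f z \in Z_n$ yields the pointwise-in-time identity
\[
\partial_t \f d_n = R_n \bigl( -(\f v_n \cdot \nabla)\f d_n + \skn v\, \f d_n - \lambda\, \syn v\, \f d_n - \gamma \f q_n \bigr).
\]
Because $\|R_n\|_{\mathcal{L}(\Le)} = 1$, it is enough to bound each of the four terms on the right-hand side in $L^{4/3}(0,T;\Le)$.

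I would handle the four contributions as follows. For the convection term, H\"older's inequality gives $\|(\f v_n\cdot\nabla)\f d_n\|_{\Le} \le \|\f v_n\|_{\f L^3}\,\|\nabla \f d_n\|_{\f L^6}$; combining $\f v_n \in L^4(0,T;\f L^3)$ from Lemma~\ref{cor:vreg} (with $p=3$, $r=4$) with $\nabla\f d_n \in L^2(0,T;\f L^6)$ (from the embedding $\Hc \hookrightarrow \f W^{1,6}$ and Corollary~\ref{cor:apri2}), the time exponents add up to $\tfrac{1}{4}+\tfrac{1}{2}=\tfrac{3}{4}$. For the rotational couplings $\skn v\, \f d_n$ and $\syn v\, \f d_n$, I would use $\|\nabla \f v_n\, \f d_n\|_{\Le} \le \|\nabla \f v_n\|_{\Le}\,\|\f d_n\|_{\f L^\infty}$ and combine $\nabla \f v_n \in L^2(0,T;\Le)$ (Corollary~\ref{cor:apri2} together with Korn) with $\f d_n \in L^4(0,T;\f L^\infty)$, which comes from the borderline case $q=\infty$, $s=4$, $\theta_2=2$ of Lemma~\ref{lem:nir}. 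Finally $\gamma \f q_n$ already lies in $L^2(0,T;\Le) \hookrightarrow L^{4/3}(0,T;\Le)$ by Corollary~\ref{cor:apri2}, since $T<\infty$.

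The real bottleneck is the convection term, which also fixes the time exponent in the conclusion: a smaller spatial $p<3$ for $\|\f v_n\|_{\f L^p}$ would give better time integrability of $\f v_n$ via Lemma~\ref{cor:vreg}, but $\nabla \f d_n$ is only controlled up to spatial $\f L^6$, which forces $p=3$ and hence exactly the time exponent $4/3$. All three remaining contributions then fit comfortably in $L^{4/3}(0,T;\Le)$ by the a priori bounds already in hand, so after the projection identification the argument reduces to a catalogue of H\"older and Gagliardo--Nirenberg-type inequalities applied to Corollary~\ref{cor:apri2} and Lemmas~\ref{lem:nir} and~\ref{cor:vreg}. I do not expect a genuinely new difficulty to arise.
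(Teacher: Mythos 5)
Your argument is correct and follows essentially the same route as the paper: identify $\partial_t\f d_n$ with the $R_n$-projection of the right-hand side of~\eqref{ddis}, then bound each of the four terms in $L^{4/3}(0,T;\Le)$ using the a priori estimates together with Lemmas~\ref{lem:nir} and~\ref{cor:vreg}. The only cosmetic differences are that you split the convection term as $\f L^3\times\f L^6$ instead of the paper's $\f L^6\times\f L^3$ (both land in $L^{4/3}$ in time), and you estimate $\syn v\,\f d_n$ by H\"older like the skew part rather than reading its $L^2(\Le)$ bound directly off the energy estimate; both variants are valid.
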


\begin{proof}
Recall that $R_n$ is the $\Le$-orthogonal projection onto $Z_n$. We thus find with \eqref{ddis} for all $t\in [0,T]$
\begin{align}
\nonumber
\|\partial_t\f d_n\|_{\Le} =&{}
\sup_{{\|\f\psi\|_{\Le}\leq 1}} | ( \partial_t\f d_n , \f\psi)|
= \sup_{{\|\f\psi\|_{\Le}\leq 1}} | ( \partial_t \f d_n , R_n \f\psi ) |
\\
\nonumber
\leq&{} \sup_{{\|\f\psi\|_{\Le}\leq 1}}
\left\|  -(\f v_n\cdot \nabla )\f d_n +\skn{v}\f d_n - \lambda \syn v \f d_n - \gamma \f q_n
\right\|_{\Le} \| R_n \f\psi\|_{\Le}
\\
\leq&{}
\left\|  (\f v_n\cdot \nabla )\f d_n \right\|_{\Le}
+ \left\|\skn{v}\f d_n \right\|_{\Le}
+ |\lambda| \left\|\syn v \f d_n \right\|_{\Le}
+ \gamma \left\|\f q_n
\right\|_{\Le}
 \, .
\label{dntabs}
\end{align}
In view of (\ref{apri1}), we see that
\begin{equation*}
\left\|\syn v \f d_n \right\|_{L^{4/3}(\Le)} \text{ and }
\left\|\f q_n \right\|_{L^{4/3}(\Le)}
\end{equation*}
are bounded.
It remains to consider the first two terms on the right-hand side of
\eqref{dntabs}.
With H\"older's inequality and Lemma~\ref{lem:nir}, we find
\begin{align*}
\| ( \f v_n \cdot \nabla ) \f d_n\| _{L^{4/3}(\Le)}
\le \|\f v_n\|_{L^2(\f L^6)} \| \nabla \f d_n\| _{L^{4}(\f L^3)}
\le c \|\f v_n\|_{L^2(\Hb)} \|\f d_n\| _{L^\infty(\Hb)}^{1/2}
\|\f d_n\| _{L^2(\Hc)}^{1/2}\, .
\end{align*}
Note that, employing Korn's inequality, all terms on the right-hand side are bounded in view of \eqref{apri2}. Similarly, we find that
\begin{align*}
\|\skn{v} \f d_n \| _{L^{4/3}(\Le)}
\le 2\|\f  v_n\|_{L^2( \Hb )}\| \f d_n \|_{ L^{4}( \f L^{\infty} )}
\le c \|\f  v_n\|_{L^2( \Hb )}
\|\f d_n\| _{L^\infty(\Hb)}^{1/2}
\|\f d_n\| _{L^2(\Hc)}^{1/2}
\end{align*}
is bounded.
\end{proof}

\subsection{Convergence of the approximate solutions}\label{sec:conv}

In what follows, we consider a sequence of approximate solutions as $n\to\infty$. The a priori estimates for the approximate solutions imply then the following results.

\begin{corollary}\label{lem:limits}
Under the assumptions of Theorem~\ref{thm:main}  there is a subsequence (not relabeled) of the sequence of solutions to the approximate problem~\eqref{eq:dis} such that
\begin{subequations}\label{wkonv}
\begin{align}
   \f v_{n }&\stackrel{*}{\rightharpoonup}  \f v \quad &&\text{ in } L^{\infty} (0,T;\Ha)\,,\label{w:vstern}\\
 \f v_{n }&\rightharpoonup  \f v \quad &&\text{ in }  L^{2} (0,T;\V)\,,\label{w:v}\\
\f q_n &\rightharpoonup  \ov{\f q} \quad &&\text{ in }  L^{2} (0,T;\Le)\,,\label{w:E}\\
\f d_{n }&\stackrel{*}{\rightharpoonup}  \f d \quad &&\text{ in } L^{\infty} (0,T;\Hb)\,,\label{w:dstern}\\
\f d_{n }&\stackrel{ }{\rightharpoonup}  \, \f d \quad &&\text{ in } L^{2} (0,T;\Hc)\,,\label{w:d}
\\
\partial_t \f v_n &\rightharpoonup \partial_t \f v  \quad&& \text{ in }
L^{2}( 0,T; (\f H^2 \cap \V)^*) \, , \label{ww:vtime}
\\
\partial_t \f d_n &\rightharpoonup \partial_t \f d  \quad &&\text{ in }
L^{4/3}( 0,T; \Le) \, , \label{ww:dtime}
\\
\f v_n &\to \f v  \quad&& \text{ in }
L^{p}( 0,T; \Ha) \text{ for any } p \in [1,\infty)
\, , \label{s:v}
\\
\f d_n &\to \f d  \quad&& \text{ in }
L^q(0,T;L^q) \text{ for any } q\in [1,10) ,
\label{s:dL10}
\\
\f d_n &\to \f d  \quad &&\text{ in }
L^r(0,T; \f W^{1,r}) \text{ for any } r \in [1,10/3)
\label{s:d}
\, .
\end{align}
\end{subequations}
\end{corollary}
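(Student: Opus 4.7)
The plan is to extract the weak and weak-star limits in the first six lines of \eqref{wkonv} directly from the uniform a priori estimates established in Corollary~\ref{cor:apri2} together with the time-derivative bounds \eqref{vnt} and \eqref{dtn}. Each of the quantities appearing on the left of \eqref{w:vstern}--\eqref{ww:dtime} is bounded, uniformly in~$n$, in the Banach space in which convergence is claimed; since these spaces are separable duals or reflexive, the Banach--Alaoglu theorem (combined with a standard diagonal subsequence argument) yields simultaneously convergent subsequences of $\f v_n$, $\f d_n$, $\f q_n$, $\t \f v_n$, and $\t \f d_n$ to some limits. The uniqueness of the distributional derivative then identifies the weak limits of $\t \f v_n$ and $\t \f d_n$ with the time derivatives of the limits $\f v$ and $\f d$, giving \eqref{ww:vtime} and \eqref{ww:dtime}. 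Note that at this stage $\overline{\f q}$ is merely the weak limit of $\f q_n$ and need not a priori coincide with the variational derivative of $\mathcal{F}$ evaluated at $\f d$; its identification is postponed to the subsequent subsection.

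The strong convergence \eqref{s:v} will be obtained via the Aubin--Lions lemma applied to the triple $\V \hookrightarrow\hookrightarrow \Ha \hookrightarrow (\f H^2 \cap \V)^*$, where the first embedding is compact by Rellich--Kondrachov and the second is clearly continuous. Combining \eqref{w:v} with \eqref{ww:vtime} yields $\f v_n \to \f v$ strongly in $L^2(0,T;\Ha)$, and the claimed $L^p(0,T;\Ha)$-convergence for $p \in [1,\infty)$ follows by interpolation with the uniform $L^\infty(0,T;\Ha)$-bound via
\begin{equation*}
\|\f v_n - \f v\|_{L^p(\Ha)}^p \le \|\f v_n - \f v\|_{L^2(\Ha)}^2 \, \|\f v_n - \f v\|_{L^\infty(\Ha)}^{p-2} \, .
\end{equation*}
Analogously, applying the Aubin--Lions lemma to the triple $\Hc \hookrightarrow\hookrightarrow \Hb \hookrightarrow \Le$ together with \eqref{w:d} and \eqref{ww:dtime} produces strong convergence $\f d_n \to \f d$ in $L^{4/3}(0,T;\Hb)$, and in particular in $L^{4/3}(0,T;\Le)$.

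To upgrade this to \eqref{s:dL10} and \eqref{s:d}, the plan is to combine the strong convergence just obtained with the interpolation estimates of Lemma~\ref{lem:nir}. Indeed, from the uniform boundedness of $\f d_n$ in $L^\infty(0,T;\Hb) \cap L^2(0,T;\Hc)$, Lemma~\ref{lem:nir} (with the limit choices $\theta_1 = \theta_2 = 2$) yields uniform bounds of $\nabla \f d_n$ in $L^{10/3}(0,T;\f L^{10/3})$ and of $\f d_n$ in $L^{10}(0,T;\f L^{10})$. A standard interpolation between the strong $L^{4/3}(\Le)$-convergence and these higher integrability bounds, via
\begin{equation*}
\|\f d_n - \f d\|_{L^q(\f L^q)} \le \|\f d_n - \f d\|_{L^{4/3}(\Le)}^{\vartheta} \, \|\f d_n - \f d\|_{L^{10}(\f L^{10})}^{1-\vartheta}
\end{equation*}
and an analogous estimate for $\nabla \f d_n - \nabla \f d$, then yields \eqref{s:dL10} and \eqref{s:d} for every $q<10$ and $r<10/3$ respectively. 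The most delicate step will be verifying the embedding chain and space compatibility for the Aubin--Lions argument applied to~$\f d_n$, since the time derivative bound lives in $L^{4/3}(0,T;\Le)$ rather than $L^2$; however, the Aubin--Lions lemma in its general form (with $p,q>1$) applies without modification, so no essential obstruction arises.
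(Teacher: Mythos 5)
Your proposal is correct and follows essentially the same route as the paper: weak and weak$^*$ limits from the a priori bounds of Corollary~\ref{cor:apri2} and the time-derivative estimates \eqref{vnt}, \eqref{dtn}; strong convergence via the Lions--Aubin lemma using the compact embeddings $\V\hookrightarrow\hookrightarrow\Ha$ and $\Hc\hookrightarrow\hookrightarrow\Hb$; and the upgrade to \eqref{s:dL10}, \eqref{s:d} by interpolating against the $L^{10}(\f L^{10})$ and $L^{10/3}(\f W^{1,10/3})$ bounds supplied by Lemma~\ref{lem:nir}. The only cosmetic caveat is that your final interpolation inequality between $L^{4/3}(\Le)$ and $L^{10}(\f L^{10})$ targeting the diagonal space $L^q(\f L^q)$ is not a collinear mixed-norm interpolation as written; it becomes a genuine Lebesgue interpolation (with a constant depending on $|\Omega|$ and $T$) once you first pass to $L^{4/3}(\Omega\times(0,T))$ via $\Le\hookrightarrow\f L^{4/3}$ on the bounded domain, exactly as the paper implicitly does by working on the diagonal through $L^p(0,T;\f L^6)$.
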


\begin{proof}
The existence of weakly and weakly$^*$ convergent subsequences immediately follows, by standard arguments, from the a priori estimates \eqref{apri2} and \eqref{apri1} together with Korn's inequality and from
\eqref{vnt}, \eqref{dtn}  together with the definition of the weak time derivative.
The strong convergence follows from the Lions--Aubin compactness lemma
(see Lions~\cite[Th\'eor\`eme~1.5.2]{lions}).
With respect to $\f v_n$, we observe that $\V$ is compactly embedded
in $\Ha$, which implies strong convergence in
$L^2(0,T;\Ha)$ and together with the boundedness in $L^{\infty}(0,T;\Ha)$ also in $L^p(0,T;\Ha)$ for any $p\in [1,\infty)$.

With respect to $\f d_n$, we observe that $\Hc$ is compactly embedded in $\Hb$, which implies strong convergence in $L^2(0,T;\Hb)$ and together with the boundedness in $L^{\infty}(0,T;\Hb)$ also in $L^p(0,T;\Hb)$ and thus in
$L^p(0,T;\f L^6)$
for any $p\in [1,\infty)$.
 Moreover, $\f d_n$ is also bounded in $L^{10}(0,T;\f L^{10})$. Indeed,
 With Lemma~\ref{lem:nir}, we find
\begin{equation*}
\| \f d_n\|_{L^{10}(\f L^{10})} \le c  \| \f d_n \|_{L^2(\Hc)}^{1/5} \| \f d_n \|_{L^\infty(\Hb)}^{4/5} \, ,
\end{equation*}
 This implies \eqref{s:dL10}.
Finally, $\f d_n$ is also bounded in $L^{10/3}(0,T;\f W^{1,10/3})$ since
\begin{equation*}
\|\f d_n\|_{L^{10/3}(\f W^{1,10/3})}
\le c \| \f d_n \|_{L^2(\Hc)}^{3/5} \| \f d_n \|_{L^\infty(\He)}^{2/5}
\end{equation*}
because of Lemma~\ref{lem:nir}. This proves \eqref{s:d}.
\end{proof}

\begin{corollary}\label{cor:initial}
Under the assumptions of Theorem~\ref{thm:main}
the limits $\f v$ and $\f d$ from Corollary~\ref{lem:limits} satisfy
\begin{equation*}
\f v(0) = \f v_0 \text{ and } \f d(0) = \f d_0 \, .
\end{equation*}
\end{corollary}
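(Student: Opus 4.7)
The strategy is the classical one of integrating by parts in time against a time-dependent test function and passing to the limit in the Galerkin identity, exploiting that the initial data appears as a boundary term.

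First, I would establish that $\f v_n(0) = P_n \f v_0 \to \f v_0$ in $\Ha$ and $\f d_n(0) = R_n \f d_0 \to \f d_0$ in $\Hb$. The former follows because $P_n$ is the $\Ha$-orthogonal projection onto $W_n$ and the union $\bigcup_n W_n$ is dense in $\Ha$. The latter uses that $R_n$ is, with respect to the equivalent inner product \eqref{innerinner}, the $\Hb$-orthogonal projection onto $Z_n$ and that the eigenfunctions $\{\f z_i\}$ form a basis of $\Hb$; alternatively one can invoke the convergence $\F(R_n \f d_0) \to \F(\f d_0)$ already remarked after Corollary~\ref{cor2} together with convergence in $\Le$ and the coercivity~\eqref{coerc1}.

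Next, for the velocity, pick an arbitrary $\f w \in \f H^2\cap\V$ and a scalar $\eta \in \C^1([0,T];\R)$ with $\eta(T)=0$. For $n$ so large that $\f w \in W_n$, testing \eqref{vdis} with $\eta(t)\f w$ and integrating by parts in time yields
\begin{equation*}
-\int_0^T (\f v_n(t), \f w)\,\eta'(t)\de t - (\f v_n(0),\f w)\,\eta(0)
= \int_0^T \langle \partial_t \f v_n(t),\f w\rangle\,\eta(t)\de t .
\end{equation*}
Owing to \eqref{s:v} the first integral converges to $-\int_0^T (\f v(t),\f w)\eta'(t)\de t$, by the first step $(\f v_n(0),\f w)\to (\f v_0,\f w)$, and by \eqref{ww:vtime} the right-hand side converges to $\int_0^T \langle \partial_t \f v(t),\f w\rangle\eta(t)\de t$. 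On the other hand, since $\f v \in W^{1,2}(0,T;(\f H^2\cap\V)^*)\cap L^\infty(0,T;\Ha) \subset \C_w([0,T];\Ha)$, the integration-by-parts formula in time gives
\begin{equation*}
-\int_0^T (\f v(t),\f w)\,\eta'(t)\de t - \langle \f v(0),\f w\rangle\,\eta(0)
= \int_0^T \langle \partial_t \f v(t),\f w\rangle\,\eta(t)\de t .
\end{equation*}
Choosing $\eta(0)=1$ and comparing the two identities yields $\langle \f v(0),\f w\rangle = (\f v_0,\f w)$ for every $\f w\in\f H^2\cap\V$. Density of $\f H^2\cap\V$ in $\Ha$ then forces $\f v(0)=\f v_0$.

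For the director, the argument is analogous: test \eqref{ddis} with $\eta(t)\f z$ for $\f z\in Z_m$, $m\leq n$, integrate by parts, and let $n\to\infty$ using \eqref{s:dL10}, $R_n\f d_0\to\f d_0$ in $\Hb\hookrightarrow\Le$, and \eqref{ww:dtime}. Comparing with the corresponding identity coming from $\f d \in W^{1,4/3}(0,T;\Le)\cap L^\infty(0,T;\Hb)\subset \C_w([0,T];\Hb)$ yields $(\f d(0),\f z)=(\f d_0,\f z)$ for every $\f z$ in the dense set $\bigcup_m Z_m \subset \Le$, hence $\f d(0)=\f d_0$.

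The only delicate point, and the one I would check carefully, is the convergence $R_n\f d_0 \to \f d_0$ in $\Hb$: it rests on the fact that the tensor $\f\Lambda$ is constant so that the $\Hb$-inner product \eqref{innerinner} is well-defined and equivalent to the standard one, and that the eigenfunctions $\{\f z_i\}$ of \eqref{boundaryvalueproblem}, being a complete orthogonal system in $\Le$ with eigenvalues tending to infinity, also form a complete orthogonal system in $\Hb$; all other steps are routine weak/strong convergence arguments combined with the usual time integration by parts.
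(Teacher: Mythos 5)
Your proposal is correct and follows essentially the same route as the paper: integration by parts in time against a cut-off with $\eta(0)=1$, $\eta(T)=0$ (the paper uses $\omega(t)=(T-t)/T$), passage to the limit via the weak convergences \eqref{ww:vtime}, \eqref{ww:dtime} and the convergence of the projected initial data, and identification of the weak limits. The only remark is that your ``delicate point'' is not actually needed: since $R_n$ is the $\Le$-orthogonal projection onto $Z_n$ and the eigenfunctions form an orthogonal basis of $\Le$, the convergence $R_n\f d_0\to\f d_0$ in $\Le$ already suffices to conclude $\f d(0)=\f d_0$, so the stronger $\Hb$-convergence need not be established here.
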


\begin{proof}
Due to the convergence results \eqref{ww:vtime} for the time derivative of the velocity field, we get
$\f v \in W^{1,2}(0,T;(\f H^2 \cap \V)^*)
\hookrightarrow \C([0,T];(\f H^2 \cap \V)^*)$ and together with \eqref{ww:vtime}  that
for any $\f w \in \f H^2 \cap \V$ and $\omega(t) = (T-t)/T$
\begin{equation*}
\langle \f v_n (0) - \f v (0) , \f w  \rangle = -
\int_0^T \left(  \langle \f v_n (t) - \f v (t) ,\f  w \omega'(t)\rangle
+ \langle \t \f v_n (t) - \t \f v (t) ,\f w \omega(t)\rangle
 \right) \de t \to 0 \, ,
\end{equation*}
which shows that \begin{equation*}
\f v_n(0) \rightharpoonup \f v(0) \text{ in } (\f H^2 \cap \V)^* .
\end{equation*}
Moreover, we know that
\begin{equation*}
\f v_n(0) = P_n \f v_0 \to \f v_0 \text{ in } \Ha \, ,
\end{equation*}
which proves $\f v(0) = \f v_0$.

Analogously, one proves that $\f d(0) = \f d_0$.
\end{proof}

With the following proposition, we identify the limit $\bar{\f q}$ in \eqref{w:E}.
\begin{proposition}\label{Eweak}
Under the assumptions of Theorem~\ref{thm:main}, the limit $\bar{\f q}$  in~\eqref{w:E} is given by $\bar{\f q} = \f q$, where $\f q$ is given by \eqref{Edef}.
\end{proposition}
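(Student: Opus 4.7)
The goal is to verify that the weak limit $\bar{\f q}$ in \eqref{w:E} coincides with the variational derivative $\f q$ given by \eqref{Edef}. The strategy is to test the weak convergence against smooth test functions and use the strong convergences of $\f d_n$ together with the $\mathcal{C}^2$-regularity and the growth of $F$ to pass to the limit inside the nonlinearity.

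As a preparatory step, I would pass to a further subsequence along which $\f d_n \to \f d$ and $\nabla \f d_n \to \nabla \f d$ pointwise a.e.\ in $\Omega \times (0,T)$; this is available from the strong convergences \eqref{s:dL10} and \eqref{s:d}. Since $F \in \mathcal{C}^2$, its first derivatives are continuous and therefore
\begin{equation*}
\pat{F}{\f h}(\f d_n, \nabla \f d_n) \to \pat{F}{\f h}(\f d, \nabla \f d), \qquad \pat{F}{\f S}(\f d_n, \nabla \f d_n) \to \pat{F}{\f S}(\f d, \nabla \f d)
\end{equation*}
pointwise a.e. To promote this to $L^2(L^2)$-convergence I would invoke Vitali's theorem after establishing slightly better uniform integrability. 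The growth bound \eqref{FcoercE}, combined with the bounds $\|\nabla \f d_n\|_{L^{10/3}(\f L^{10/3})} \le c$ and $\|\f d_n\|_{L^{10}(\f L^{10})} \le c$ coming from Lemma~\ref{lem:nir} and Corollary~\ref{cor:apri2}, yields a uniform $L^{2+\varepsilon}(L^{2+\varepsilon})$-bound for $\partial F/\partial \f h(\f d_n, \nabla \f d_n)$ precisely because $\gamma_1 < 10/3$ and $\gamma_2 < 10$ are strict. For $\partial F/\partial \f S$ I would first use the representation
\begin{equation*}
\pat{F}{\f S}(\f h, \f S) = \pat{F}{\f S}(\f 0, \f 0) + \int_0^1 \pat{^2 F}{\f S \partial \f h}(\tau \f h, \tau \f S) \de \tau \cdot \f h + \f\Lambda : \f S,
\end{equation*}
which follows from the fundamental theorem of calculus and \eqref{LambdaS}, and then combine \eqref{FSh}, the definition \eqref{beta} of $\gamma_3$, and H\"older's inequality to derive a similar $L^{2+\varepsilon}(L^{2+\varepsilon})$-bound.

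With these convergences in hand, for any $\f\psi \in \mathcal{C}_c^\infty(\Omega \times (0,T); \R^3)$ the $\Le$-self-adjointness of $R_n$ and integration by parts (valid since $R_n\f\psi \in Z_n \subset \f H^2 \cap \f H^1_0$) yield
\begin{equation*}
\intte{(\f q_n, \f\psi)} = \intte{\left[\left(\pat{F}{\f h}(\f d_n, \nabla \f d_n), R_n\f\psi\right) + \left(\pat{F}{\f S}(\f d_n, \nabla \f d_n), \nabla R_n\f\psi\right)\right]}.
\end{equation*}
Since $\mathcal{C}_c^\infty \subset \Hc$ and the $\f z_i$ form a complete system in $\Hc$ with respect to the equivalent inner product \eqref{innerinner}, we have $R_n\f\psi \to \f\psi$ in $L^\infty(0,T;\Hc)$ on a uniform time support. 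Combining strong $L^2(L^2)$-convergence of the two nonlinear derivatives with convergence of $R_n\f\psi$ in $L^2(\Hb)$, I can pass to the limit on the right to obtain $\intte{(\f q, \f\psi)}$, while the weak convergence \eqref{w:E} identifies the left-hand side with $\intte{(\bar{\f q}, \f\psi)}$. By density of $\mathcal{C}_c^\infty$ in $L^2(0,T;\Le)$, this forces $\bar{\f q} = \f q$.

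The main obstacle is the uniform integrability step. The strict bounds $\gamma_1 < 10/3$ and $\gamma_2 < 10$ prescribed in \eqref{freeenergy} are used in an essential way: they convert the borderline Gagliardo--Nirenberg bounds $\nabla \f d_n \in L^{10/3}$ and $\f d_n \in L^{10}$ into genuine $L^{2+\varepsilon}$-control on the nonlinear derivatives of $F$, which is exactly what enables Vitali's theorem to upgrade pointwise convergence to strong $L^2(L^2)$-convergence. The auxiliary term $|\f h|\,|\f S|^{\gamma_1/2-1}$ in \eqref{FSh} must be handled carefully via H\"older's inequality using \eqref{beta}, and the constant part $\f\Lambda:\f S$ arising from the representation of $\partial F/\partial \f S$ is dealt with by linearity directly, using the weak convergence \eqref{w:d} of $\nabla \f d_n$ against $\nabla R_n\f\psi$.
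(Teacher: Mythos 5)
Your argument is correct and follows essentially the same route as the paper's proof: both rest on the pointwise a.e.\ convergence of $(\f d_n,\nabla\f d_n)$ extracted from \eqref{s:dL10}--\eqref{s:d}, the growth conditions \eqref{FSh}--\eqref{FcoercE} with the strict exponent bounds to obtain equi-integrability (the paper phrases this as domination in $L^2(\Le)$ and Lebesgue's theorem rather than Vitali, which is an equivalent mechanism), the isolation of the linear $\f\Lambda$-part treated by weak--strong pairing, and the projection error $R_n\f\psi-\f\psi\to 0$. The only cosmetic differences are that you represent $\partial F/\partial\f S$ via the fundamental theorem of calculus and integrate by parts onto $R_n\f\psi$, whereas the paper expands $\di\,\partial F/\partial\f S$ directly through \eqref{QQQQ} and tests against general $\f\psi\in L^2(0,T;\Hb)$ instead of invoking density; your FTC device is in fact the one the paper itself deploys later for the nonlinear principal part in Section~\ref{sec:more}.
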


\begin{proof}
In what follows, we do not relabel the subsequence that exists in view of Corollary~\ref{lem:limits}.
We already know \eqref{w:E} and wish to establish now weak convergence of $\f q_n $ to $ \f q$ in $L^{2}(0,T;\Hd)$.
Recalling that $R_n$ is the $\Le$-orthogonal projection onto $Z_n$,
we find for all $\f \psi  \in L^2( 0,T;\Hb)$ that
\begin{align}
&\intte{ \langle \f q_n(t) - \f q(t) ,  \f \psi(t) \rangle }\notag\\
= &\intte{ \left \langle \pat{F}{\f h} ( \f d_n(t), \nabla \f d_n(t)) - \di \pat{F}{\f S}     ( \f d_n(t), \nabla \f d_n(t)), R_n \f \psi(t) \right \rangle} \notag\\
 &-\intte{ \left \langle\pat{F}{\f h} ( \f d(t), \nabla \f d(t)) - \di \pat{F}{\f S}(\f d(t), \nabla \f d(t)), \f \psi(t)   \right \rangle     }\notag \\
 =
 &\intte{ \left \langle \pat{F}{\f h} ( \f d_n(t), \nabla \f d_n(t)) - \di \pat{F}{\f S}     ( \f d_n(t), \nabla \f d_n(t)), R_n \f \psi(t) - \f \psi(t) \right \rangle} \notag\\
&
 +\intte{ \left \langle \pat{F}{\f h} ( \f d_n(t), \nabla \f d_n(t)) - \pat{F}{\f h} ( \f d(t), \nabla \f d(t))  , \f \psi(t)   \right \rangle
  }\notag
\\
&- \intte{ \left \langle \di  \pat{F}{\f S}     ( \f d_n(t), \nabla \f d_n(t))- \di \pat{F}{\f S}(\f d(t), \nabla \f d(t)), \f \psi(t) \right \rangle}
 =: I_{1,n} + I_{2,n} + I_{3,n} \, .\label{qkonv}
\end{align}

Regarding the term $I_{1,n}$, we note that
\begin{align*}
&\left\|\pat{F}{\f h} ( \f d_n(t), \nabla \f d_n(t)) - \di \pat{F}{\f S}     ( \f d_n(t), \nabla \f d_n(t)) \right\|_{L^2(\f L^2)}
\\
&\le
c \left ( \| \f d_n \|_{L^2(\Hc)}\|\f d_n \|_{L^{\infty}(\Hb)}^4 + \| \f d_n \|_{L^2(\Hc)}+ 1 \right )
\, ,
\end{align*}
which can be shown as in the proof of Proposition~\ref{propprop}.
This, together with \eqref{apri2}, shows the boundedness of the term above.
Moreover, $R_n$ is the $\Le$-orthogonal projection onto $Z_n$ such that for all $\f \psi  \in L^2( 0,T;\Le)$
\begin{align*}
\lim_{n\ra \infty} \| R_n \f \psi - \f \psi \|_{L^2(\Le)}= 0\, .
\end{align*}
This shows that $I_{1,n}$ converges to $0$ as $n\to\infty$.

Let us now consider the term $I_{2,n}$. Because of \eqref{s:dL10} and \eqref{s:d}, we observe that (passing to a subsequence if necessary)
\begin{equation}
\f d_n(\f x,t)  \to \f d(\f x,t)  \, , \quad
\nabla \f d_n(\f x,t)  \to \nabla \f d(\f x,t)\label{pktw}
\end{equation}
for almost all $(\f x,t) \in \Omega \times (0,T)$. Moreover,
$|\f d_n(\f x,t)|$ is dominated by a function in
$L^q(0,T;L^q)$ ($q\in [1,10)$)
and $|\nabla\f d_n(\f x,t)|$ is dominated by a function in
$L^r(0,T; \f L^r)$ ($r \in [1,10/3)$). The growth condition \eqref{FcoercE} then shows that
\begin{equation*}
\left|\pat{F}{\f h} ( \f d_n(\f x, t), \nabla \f d_n(\f x, t))
\right| \le C_{\f h} \left( |\nabla \f d_n(\f x, t)|^{\gamma_1/2}
+ |\f d_n(\f x, t)|^{\gamma_2/2}
+1 \right)
\end{equation*}
is dominated by a function in $L^2(0,T;\Le)$.
With the continuity of $\pat{F}{\f h}$ and Lebesgue's theorem on dominated convergence, we thus find that
$I_{2,n}$ converges to $0$ as $n\to\infty$.

For the term $I_{3,n}$, we find (see \eqref{QQQQ}) that
\begin{align*}
I_{3,n} ={}&
 \intte{ \left \langle  \left(\f \Lambda : \nabla (\f d_n(t) - \f d(t)) \right) :
\nabla \f \psi(t) \right \rangle}
\\
& - \intte{ \left \langle
\pat{^2 F}{\f S\partial \f h}
( \f d_n(t), \nabla \f d_n(t)) : \nabla \f d_n(t)^T
- \pat{^2 F}{\f S\partial \f h}
(\f d(t), \nabla \f d(t))  : \nabla \f d(t)^T , \f \psi(t) \right \rangle}.
\end{align*}
The first term on the right-hand side, which is linear, converges to $0$ because of \eqref{w:d}. The second term can be dealt with similarly as $I_{2,n}$. In particular, \eqref{FSh} together with Young's inequality provides that
\begin{align*}
\left|\pat{^2 F}{\f S\partial \f h}
( \f d_n(\f x, t), \nabla \f d_n(\f x, t))
: \nabla \f d_n(\f x , t)^T \right| &\le
C_{\f S\f h} \left( | \nabla \f d_n(\f x ,t) |^{\gamma_1/2-1}+|\f d_n(\f x, t)|^{\gamma_3}  + 1 \right)
\left| \nabla \f d_n(\f x , t) \right|
\\
&\le c \left(
|\nabla \f d_n(\f x, t)|^{\gamma_1/2}
+ |\f d_n(\f x, t)|^{\gamma_2/2}
+ 1
 \right)
\end{align*}
is dominated by a function in $L^2(0,T;\Le)$.
\end{proof}

We are now ready to prove the main result.

\begin{proof}[Proof of Theorem~\ref{thm:main}]
It only remains to prove that the limit $(\f v, \f d)$
from Corollary~\ref{lem:limits} satisfies the original problem in the sense of Definition~\ref{defi:weak}. This is shown by passing to the limit in the approximate problem \eqref{eq:dis}.

Let us start with the approximation \eqref{vdis} of the Navier--Stokes-like equation. In view of Corollary~\ref{lem:limits}, we already know that the term incorporating the time derivative converges. Moreover, we find with
\eqref{s:v} convergence of the convection term such that for all solenoidal $\f\varphi \in \mathcal{C}_c^\infty( \Omega \times (0,T);\R^3)$
\begin{equation*}
\int_0^T ((\f v_n(t)\cdot \nabla) \f v_n(t), \f \varphi(t)) \de t
\to
\int_0^T ((\f v(t)\cdot \nabla) \f v(t), \f \varphi(t)) \de t \, .
\end{equation*}
With Proposition~\ref{Eweak}, \eqref{w:E}, and \eqref{s:dL10}, we find that
\begin{equation*}
\intte{\left \langle\nabla \f d_n^T(t) \f q_n(t)  ,  \f \varphi(t)
\right \rangle}
\to
\intte{\left \langle\nabla\f d^T(t) \f q(t)  ,  \f \varphi(t)
\right \rangle}\,.
\end{equation*}
With respect to the term incorporating the Leslie tensor, we only focus on the first term that is the least regular one. With \eqref{w:v} and \eqref{s:dL10}, we find that
\begin{align*}
&\intte{\left \langle (\f d_n(t) \cdot  (\nabla \f v_n(t))_{\sym}
\f d_n(t) )\f d_n(t) \otimes \f d_n(t)
: \nabla \f \varphi(t)
\right \rangle}
\\
&\to
\intte{\left \langle (\f d(t) \cdot  (\nabla \f v(t))_{\sym}
\f d(t) )\f d(t) \otimes \f d(t)
: \nabla \f \varphi(t)
\right \rangle}
\,.
\end{align*}
This, together with similar observations for the other terms,
shows that
\begin{equation*}
\intte{(\f T^L_n (t): \nabla \f \varphi(t) ) }
\to \intte{(\f T^L (t): \nabla \f \varphi(t) ) } \, ,
\end{equation*}
where $\f T^L$ is given by \eqref{leslieleslie}, which is equivalent to \eqref{Leslie}.

Regarding the approximation \eqref{ddis} of the director equation, we observe convergence of the term incorporating the time derivative because of \eqref{ww:dtime}. With \eqref{s:v} and \eqref{s:dL10}, we obtain
for all $\f\psi \in \mathcal{C}_c^\infty( \Omega \times (0,T);\R^3)$
\begin{equation*}
\int_0^T ((\f v_n(t)\cdot \nabla) \f d_n(t), \f \psi(t)) \de t
\to
\int_0^T ((\f v(t)\cdot \nabla) \f d(t), \f \psi(t)) \de t \, .
\end{equation*}
With \eqref{w:v} and \eqref{s:dL10}, we find that
\begin{equation*}
\intte{\left ( (\nabla \f v_n(t))_{\skw} \f d_n(t) , \f \psi(t)\right )}
\to
\intte{\left ( (\nabla \f v(t))_{\skw} \f d(t) , \f \psi(t)\right )}
\end{equation*}
as well as
\begin{equation*}
\intte{\left ( (\nabla \f v_n(t))_{\sym} \f d_n(t) , \f \psi(t)\right )}
\to
\intte{\left ( (\nabla \f v(t))_{\sym} \f d(t) , \f \psi(t)\right )}.
\end{equation*}
For the term with $\f q_n$, we employ \eqref{w:E} together with Proposition~\ref{Eweak}.

All this shows that the limit $(\f v, \f d)$ of the approximate solutions satisfy the original equations. Moreover, Corollary~\ref{cor:initial} shows that the initial conditions are also fulfilled.
\end{proof}
\section{Nonlinear principal part\label{sec:more}}
In this section, we study the more general case~\eqref{tildeF} and prove the main result as stated in Theorem~\ref{thm:main} but with~\eqref{LambdaS} replaced by~\eqref{Lambda2}.
\subsection{Assumptions on the second derivative\label{sec:assThe}}
To handle more general free energies with a possible nonlinear principal part, we consider now a more general model.
More specifically, the result of Theorem~\ref{thm:main} remains true, if the  assumption~\eqref{Lambda1} is replaced by
\begin{equation}\label{Lambda2}
\frac{\partial^2 F}{\partial \f S^2}(\f h ,\f S) = \f \Lambda  + \f \Theta (\f h , \f S)  \, ,
\end{equation}
where $ \f \Theta  \in \mathcal{C}(\R^3 , \R^{3\times 3} ;\R^{3\times 3\times 3\times 3})$ is sufficiently small such that $ | \f \Theta ( \f h , \f S) | \leq c_{\f \Theta}$ with  $c_{\f \Theta} = c_{\f\Lambda} /(16 c_{\f H^2}) $. Here $c_{\f \Lambda}$ denotes the constant of estimate~\eqref{H2-Lambda} and $ c_{\f H^2}$ denotes the constant of the equivalent norm estimate of $ \| \cdot \|_{\f H^2} \leq  c_{\f H^2}   \|  \cdot\|_{\Hc} =c_{\f H^2}\| \Delta \cdot \|_{\Le}  $ (see also Section~\ref{sec:not}).
The proof of Theorem~\ref{thm:main} is similar under this modified assumption. We are commenting only on the necessary changes.
\begin{theorem}
Let $\Omega$ be a bounded domain of class $\C^{2}$, assume \eqref{con}, and let the free energy potential $F$ fulfil the assumptions~\eqref{regF}, \eqref{Lambda2}, \eqref{coerc1}, and \eqref{freeenergy}. For given initial data $ \f v_0 \in \Ha $, $ \f d_0\in  \Hb $ (such that $\f d_1 = 0$)  and right-hand side $ \f g\in L^2 ( 0,T; (\V)^*)$, there exists a weak solution to the Ericksen--Leslie system~\eqref{eq:strong}  with
\eqref{abkuerzungen}, \eqref{anfang}
in the sense of Definition~\ref{defi:weak}.
\end{theorem}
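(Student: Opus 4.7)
The plan is to mirror the proof of Theorem~\ref{thm:main} as closely as possible, employing the same Galerkin bases throughout: note in particular that the director basis $\{\f z_i\}$ consists of eigenfunctions of $-\di(\f \Lambda : \nabla \cdot)$ with the \emph{constant} tensor $\f \Lambda$ from~\eqref{Lambda2}, so neither the construction of $Z_n$ nor the $H^2$-regularity estimate~\eqref{H2-Lambda} is affected. The energy identity of Proposition~\ref{lem:1}, the resulting energy inequality in Corollary~\ref{cor1}, and the time-derivative bounds~\eqref{vnt} and~\eqref{dtn} all continue to hold verbatim, since they invoke only the formal variational structure of $F$ and not the specific form of its second derivative.

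The first substantive modification will be in the derivation of the $L^2(\Hc)$-bound in Corollary~\ref{cor:apri2}. Inserting~\eqref{Lambda2} into the expansion~\eqref{QQQQ} now produces
\begin{equation*}
\f q = \pat{F}{\f h}(\f d,\nabla\f d) - \di(\f \Lambda : \nabla \f d) - \f \Theta(\f d,\nabla\f d)\drei \nabla^2 \f d - \pat{^2 F}{\f S\partial \f h}(\f d,\nabla\f d) : \nabla \f d^T .
\end{equation*}
After applying $R_n$, exploiting $R_n \di(\f \Lambda : \nabla \f d_n) = \di(\f \Lambda : \nabla \f d_n)$ (since $\f d_n$ lies in the span of eigenfunctions of $-\di(\f \Lambda:\nabla\cdot)$) and $\|R_n\|_{\mathcal{L}(\Le)} = 1$ elsewhere, a Young-type inequality of the form $|a-b-c-d|^2 \ge \tfrac12|b+c|^2 - 2|a|^2 - 2|d|^2$ will give
\begin{equation*}
\|\f q_n\|_{\Le}^2 \ge \tfrac{1}{2}\bigl\|\di(\f \Lambda:\nabla \f d_n) + R_n\bigl(\f \Theta(\f d_n,\nabla \f d_n)\drei \nabla^2 \f d_n\bigr)\bigr\|_{\Le}^2 - (\text{l.o.t.}) .
\end{equation*}
Combining $\|\di(\f \Lambda:\nabla\f d_n)\|_{\Le} \ge c_{\f\Lambda}^{-1}\|\f d_n\|_{\f H^2}$ from~\eqref{H2-Lambda} with the pointwise bound $|\f\Theta| \le c_{\f\Theta}$, which yields $\|\f \Theta \drei \nabla^2 \f d_n\|_{\Le} \le c_{\f\Theta} c_{\f H^2}\|\Delta \f d_n\|_{\Le}$, and using the smallness $c_{\f\Theta} = c_{\f\Lambda}/(16c_{\f H^2})$, the $\f \Theta$-contribution is absorbed into a fraction of the $\f \Lambda$-part and one recovers $\|\f q_n\|_{\Le}^2 \ge c_1\|\Delta \f d_n\|_{\Le}^2 - (\text{l.o.t.})$ with some $c_1>0$. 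The remaining lower-order terms are then controlled exactly as in the linear case by Lemma~\ref{lem:nir}. A completely analogous but easier adjustment takes care of the term $I_4$ in the bound on $\F(R_n \f d_0)$ in Corollary~\ref{cor2}, where the uniform boundedness of $\f \Theta$ alone suffices.

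The second modification will concern identification of the weak limit of $\f q_n$ in Proposition~\ref{Eweak}. The term $I_{3,n}$ now carries an additional summand
\begin{equation*}
-\intte{\left\langle \f\Theta(\f d_n,\nabla\f d_n)\drei \nabla^2\f d_n - \f\Theta(\f d,\nabla\f d)\drei \nabla^2\f d , \f\psi\right\rangle},
\end{equation*}
which I would dispatch by a strong-times-weak argument. The a.e.\ convergence~\eqref{pktw} of $(\f d_n,\nabla \f d_n)$, together with the continuity and uniform boundedness of $\f\Theta$, gives via Lebesgue's dominated convergence theorem that $\f\Theta(\f d_n,\nabla\f d_n) \to \f\Theta(\f d,\nabla\f d)$ strongly in every $L^p(L^p)$ with $p<\infty$, while~\eqref{w:d} yields $\nabla^2\f d_n \rightharpoonup \nabla^2\f d$ weakly in $L^2(\Le)$; pairing against the fixed test function $\f\psi$ delivers the required convergence. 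The remaining terms in $I_{1,n}, I_{2,n}$ and in the Navier--Stokes-like equation are treated verbatim as in the proof of Theorem~\ref{thm:main}.

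The main obstacle in this program is the perturbative coercivity estimate in the modified Corollary~\ref{cor:apri2}: preserving the bound $\|\f q_n\|_{\Le}^2 \gtrsim \|\Delta \f d_n\|_{\Le}^2 - (\text{l.o.t.})$ in the presence of the nonlinear contribution $\f \Theta(\f d_n,\nabla\f d_n)\drei\nabla^2\f d_n$ is precisely what forces the quantitative smallness assumption on $\f\Theta$ and constitutes the only nontrivial modification of the original argument.
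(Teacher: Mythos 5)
Your proposal is correct and its skeleton coincides with the paper's: same Galerkin bases (built from the constant $\f \Lambda$ only), unchanged energy identity and time-derivative bounds, and the same absorption argument for the coercivity estimate, where the contribution $\f \Theta(\f d_n,\nabla \f d_n)\drei\nabla(\nabla\f d_n)^T$ is controlled by $c_{\f\Theta}c_{\f H^2}\|\Delta\f d_n\|_{\Le}$ and swallowed by the $\f\Lambda$-part thanks to $c_{\f\Theta}=c_{\f\Lambda}/(16c_{\f H^2})$; the paper keeps the four terms separate via an inequality of the form $|\sum_i\f a_i|^2\ge\tfrac12|\f a_1|^2-4|\f a_2|^2-10(|\f a_3|^2+|\f a_4|^2)$ rather than grouping $\f\Lambda$- and $\f\Theta$-terms as you do, but this only shifts the numerical threshold, not the argument. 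The one place where you genuinely diverge is the identification of the limit in $I_{3,n}$. You keep the second-derivative form $\f\Theta(\f d_n,\nabla\f d_n)\drei\nabla^2\f d_n$ and pass to the limit by a strong-times-weak product argument: $\f\Theta(\f d_n,\nabla\f d_n)\to\f\Theta(\f d,\nabla\f d)$ strongly in every $L^p(L^p)$, $p<\infty$ (dominated convergence from~\eqref{pktw} and the uniform bound on $\f\Theta$), against $\nabla^2\f d_n\rightharpoonup\nabla^2\f d$ in $L^2(\Le)$ from~\eqref{w:d}. This is valid. The paper instead integrates by parts first, so that $I_{3,n}$ becomes $\intte{(\pat{F}{\f S}(\f d_n,\nabla\f d_n)-\pat{F}{\f S}(\f d,\nabla\f d):\nabla\f\psi)}$, and then applies the fundamental theorem of calculus in the state variable, splitting into $J_{1,n}+J_{2,n}+J_{3,n}$; the $\f\Theta$- and $\f\Lambda$-contributions are then estimated by $(c_{\f\Theta}+|\f\Lambda|)\|\nabla\f d_n-\nabla\f d\|_{L^2(\Le)}\|\nabla\f\psi\|_{L^2(\Le)}$ and handled by the \emph{strong} convergence~\eqref{s:d} alone, with no second derivatives of the limit appearing in the limit passage. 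Your route is shorter and needs only the continuity and boundedness of $\f\Theta$; the paper's route avoids invoking the pointwise expansion of $\di\pat{F}{\f S}(\f d,\nabla\f d)$ for the less regular limit $\f d$ and requires no weak convergence of $\nabla^2\f d_n$. Both close the argument.
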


\subsection{Estimate of the variational derivative}
Under assumption~\eqref{Lambda2}, the variational derivative (compare with equation~\eqref{QQQQ}) becomes
\begin{align*}
\f q
&= - \di ( \f \Lambda : \nabla \f d )  - \f \Theta (\f d, \nabla \f d)\drei
\nabla (\nabla {\f d})^T  - \pat{^2 F}{\f S\partial \f h}(\f d, \nabla \f d) : (\nabla \f d)^T + \pat{F}{\f h} (\f d, \nabla \f d)
\, .
\end{align*}
For arbitrary $\f a_i \in \R^3$, $i \in \{ 1,2,3,4\}$ one finds that
\begin{align*}
| \sum_{i=1}^4 \f a_i |^2
={}&
|\f a_1|^2 + | \f a_2|^2+ | \f a_3|^2 + | \f a _4|^2   + 2 \f a _1 \cdot (\f a_2 +  \f a_3 +\f a_4) + 2 \f a _2 \cdot (\f a_3 +  \f a _4 )+ 2\f a_3 \cdot \f a_4\notag\\
\geq{}&
\frac{1}{2}|\f a_1|^2 + | \f a_2|^2+ | \f a_3|^2 + | \f a _4|^2  - 2 | \f a_2 +  \f a_3 +\f a_4|^2 - | \f a_2|^2 - | \f a_3 + \f a_4|^2 - | \f a_3|^2 - |\f a_4|^2 \notag\\
\geq{}& \frac{1}{2}\, |\f a_1|^2 -4 | \f a_2|^2 -  10(|\f a_3|^2+|\f a_4|^2)\,
\end{align*}
and thus\begin{align*}
|\f q|^2\geq \frac{1}{2} \left | \di ( \f \Lambda:\nabla \f d) \right  |^2 &- 4 \left |\f  \Theta (\f d, \nabla \f d) \drei \nabla (\nabla \f d)^T \right |^2\\&- 10 \left | \pat{^2 F}{\f S\partial \f h}(\f d, \nabla \f d) : (\nabla \f d)^T \right |^2 - 10 \left |\pat{F}{\f h} (\f d, \nabla \f d)\right |^2 \, .
\end{align*}
Inserting this estimate into Corollary~\ref{cor:apri2} yields
\begin{align}
|\f q_n|^2\geq \frac{1}{2} \left |R_n   \di ( \f \Lambda : \nabla  \f d_n  )\right  |^2 &- 4 \left | R_n \left (\f \Theta (\f d_n , \nabla \f d_n ) \drei \nabla (\nabla \f d_n )^T \right )\right |^2\\&- 10 \left |R_n \left (\pat{^2 F}{\f S\partial \f h}(\f d_n , \nabla \f d_n ) : (\nabla \f d_n )^T \right )\right |^2 - 10 \left |R_n\pat{F}{\f h} (\f d_n , \nabla \f d_n )\right |^2 \, .\label{qgeneral}
\end{align}
Again the orthogonality property of $R_n$ and the estimate~\eqref{H2-Lambda} yields
\begin{multline}
\frac{1}{2} \left \|R_n   \di (\f  \Lambda : \nabla  \f d_n  )\right  \|_{\Le}^2 - 4 \left \| R_n \left (\f \Theta (\f d_n , \nabla \f d_n ) \drei \nabla (\nabla \f d_n )^T \right )\right \|_{\Le}^2  \geq \\ \frac{c_{\f \Lambda}}{2} \| \Delta \f d _n \|_{\Le}^2 - 4 \left \|\f  \Theta (\f d_n , \nabla \f d_n ) \drei \nabla (\nabla \f d_n )^T \right \|_{\Le}^2\,.\label{nummer}
\end{multline}
The assumptions on $\f \Theta $ (see Section~\ref{sec:assThe}) guarantee that
\begin{align*}
\left \| \f \Theta (\f d_n , \nabla \f d_n ) \drei \nabla (\nabla \f d_n )^T \right \|_{\Le} \leq \left \| \f \Theta (\f d_n , \nabla \f d_n \right )\|_{\f L^\infty } \| \f d_n \|_{\f H^2} \leq c_{\f \Theta} c_{\f H^2}  \| \Delta \f d_n \|_{\Le}\leq \frac{c_{\f \Lambda}}{16 }\| \Delta \f d_n \|_{\Le}\,.
\end{align*}
Inserting this into~\eqref{nummer} then shows that~\eqref{qgeneral} together with~\eqref{FcoercE} yields the estimate~\eqref{qnabsch}.
Now we can proceed as in the proof of Corollary~\eqref{cor:apri2}.
\subsection{Convergence of the nonlinear part}
The only other change in the proof of Theorem~\ref{thm:main} under the new assumptions~\eqref{Lambda2} is needed in the limiting procedure in the nonlinear terms in Proposition~\ref{Eweak}.
We consider the term $I_{3,n}$ of equation~\eqref{qkonv}.
For this term, we find with the fundamental theorem of calculus
that
\begin{align*}
I_{3,n} ={}&
\intte{ \left ( \pat{F}{\f S}( \f d_n (t), \nabla \f d_n (t))- \pat{F}{\f S}(\f d(t), \nabla \f d(t)) , \nabla \f \psi(t) \right )
}
\\
={}& \intte{\left ( \int_0^1 \pat{F}{\f S\partial \f h}( \f d^n_s(t), \nabla \f d^n_s(t)) \cdot ( \f d_n (t) -\f d(t))  \de s
: \nabla \f \psi (t) \right )
}\\
{}&+
\intte{\left ( \int_0^1  \f \Theta ( \f d^n_s(t), \nabla \f d^n_s(t)) : ( \nabla \f d_n (t) - \nabla \f d(t) )\de s
: \nabla \f \psi (t) \right )}
\\&+
 \intte{ \left (  \left (\f \Lambda :( \nabla \f d_n(t) - \nabla \f d(t))\right )  :
\nabla \f \psi(t) \right )}= J_{1,n}+J_{2,n}+J_{3,n}
\,,
\end{align*}
where $\f d^n_s(t) : = s \f d_n (t) + ( 1-s)\f d(t)$.
For the term $J_{1,n}$, the growth condition~\eqref{FSh}, Young's inequality, and~\eqref{beta} show that
\begin{align*}
&\left|\pat{^2 F }{\f S\partial \f h}
( \f d^n_s(\f x, t), \nabla \f d^n_s(\f x, t))
\cdot ( \f d_n(\f x , t)-\f d(\f x, t) ) \right| \\&\le
C_{\f S\f h} \left(| \nabla \f d^n_s(\f x ,t ) |^{\left ({\gamma_1}/{2}-1\right
)}+ |\f d^n_s(\f x, t)|^{\gamma_3}  + 1 \right)
\left (
\left|  \f d_n(\f x , t) \right| + \left|  \f d(\f x , t) \right|  \right )
\\
&\le c \left(
|\nabla \f d^n_s(\f x ,t )|^{{\gamma_1}/{2}}
+ |\f d^n_s(\f x ,t )|^{{\gamma_2}/{2}}
+
\left|  \f d_n(\f x , t) \right|^{{\gamma_1}/{2}}  + \left|  \f d(\f x , t) \right|^{{\gamma_1}/{2}} + 1
 \right)  \\
&\le c \left(
|\nabla \f d_n (\f x ,t )|^{{\gamma_1}/{2}}
+ |\f d_n (\f x ,t )|^{{\gamma_2}/{2}}+|\nabla \f d(\f x ,t )|^{{\gamma_1}/{2}}
+ |\f d(\f x ,t )|^{{\gamma_2}/{2}}
+ 1
 \right)\,.
\end{align*}
Thus, the above function is dominated by a function in $L^2(0,T; \Le)$ (see also~\eqref{pktw} and note that $\gamma_2 <10$). The continuity of $\pat{F}{\f S \partial \f h}$ and the point wise convergence~\eqref{pktw} imply  $| J_{1,n} | \ra 0  $ as $n \to \infty $ in view of Lebesgue's theorem on dominated convergence.

For the terms $J_{2,n}$ and $J_{3,n}$, we observe that those terms can be estimated due to the assumptions on~$\f \Theta$ by
\begin{align*}
|J_{2,n}+J_{3,n}| \leq {}& ( c_{\f \Theta } + | \f \Lambda|) \| \nabla \f d_n  - \nabla \f d \|_{L^2(\Le)} \| \nabla \f \psi\|_{L^2(\Le)}\,.
\end{align*}
The strong convergence~\eqref{s:d} with $r=2$ shows that  $|J_{2,n}+J_{3,n}|\ra 0$, for $n \ra \infty$.

\section{Some examples}\label{sec:ex}

\subsection{Dirichlet energy with Ginzburg--Landau penalisation}
System \eqref{eq:strong}--\eqref{Edef} with the assumptions on the free energy~\eqref{regF} and~\eqref{freeenergy} is especially a generalisation of the models considered by Lin and Liu~\cite{linliu3} and by Cavaterra et al.~\cite{allgemein}.
Therefore, the free energy considered in \cite{linliu3,allgemein} fits into our setting, i.e., the free energy function~\eqref{penal}
with $k_1,\varepsilon>0$ fulfils the hypothesis on the free energy potential in Section~\ref{free} with $\f \Lambda$ being a multiple of the identity mapping $\R^{3\times 3}$ into $\R^{3\times 3}$ and $\gamma_1=2$, $\gamma_2 = 6$.

\subsection{Electromagnetic field effects}
If the influence of an electromagnetic field is taken into account, which is essential since the desirable anisotropic effects are controlled in such a way,  the  function~\eqref{FH}
is needed (see de Gennes~\cite{gennes}).
If the magnetic field is bounded such that $|\f H(\f x ,t)| \leq C$ for all $(\f x ,t)\in \Omega\times [0,T]$, the energy~\eqref{FH} fits into our model. Especially, it fulfils the assumptions~\eqref{coerc1} and~\eqref{FcoercE}.

As the assumption~\eqref{FcoercE} suggests, functions $\tilde{F}$ (see~\eqref{tildeF}) which only depend on the director $\f d$ and not on its gradient are incorporated in our setting as long as they are continuously differentiable, bounded from below  and of polynomial growth of a degree strictly less than $6$.

\subsection{Additional degrees of freedom}
Leslie recognised in \cite{leslie} that the system~\eqref{ELeq} possesses two additional degrees of freedom.
He proposed to alter the system~\eqref{eq:strong} by adding $\f d \otimes \f b$ to the derivative of the free energy potential with respect to the gradient of the director, and by adding $ \bar{b}\f  d - \nabla \f d\f b$ to the derivative of the free energy with respect to the director with certain constant vector $\f b \in \R^d$ and constant scalar $\bar{b}\in \R$.

These two changes can be introduced into the system by replacing the free energy $F$ by a new function $F_A$ defined by~\eqref{FA}.

With this function, we find that
\begin{align*}
\pat{F_A}{\f S} ( \f d, \nabla \f d) &= \pat{F}{\f S}( \f d, \nabla \f d) - \f d \otimes \f b
\end{align*}
and
\begin{align*}
\pat{F_A}{\f h}(\f d, \nabla \f d) & = \pat{F}{\f h} ( \f d, \nabla \f d)- \nabla \f d \, \f b + \bar{b}\f d \, ,
\end{align*}
as proposed by Leslie (see~\cite{leslie}).
 What remains is to check  that the system~\eqref{eq:strong} is not altered somewhere else. The only other term depending on the free energy is the Ericksen stress. To calculate the altered Ericksen stress, tested with a solenoidal, sufficiently smooth function $\f w$, we use the calculation~\eqref{identi} and obtain
\begin{align*}
\langle \f T^E_A: \nabla \f w \rangle & = \langle \nabla \f d^T \f q_A  ,  \f w \rangle    = \left \langle \nabla \f d^T\left ( \pat{F_A}{\f h}(\f d, \nabla \f d) - \di \pat{F_A}{\f S} ( \f d, \nabla \f d)\right ), \f w \right \rangle \\
& =\left \langle \nabla \f d^T \left ( \pat{F}{\f h} ( \f d, \nabla \f d)- \nabla \f d \,\f b + \bar{b}\f d -\di  \left ( \pat{F}{\f S}( \f d, \nabla \f d) - \f d \otimes \f b \right )    \right ), \f w \right \rangle \\
& = \left \langle \nabla \f d^T \left ( \pat{F}{\f h} ( \f d, \nabla \f d)  -\di   \pat{F}{\f S}( \f d, \nabla \f d) + \bar{b}\f d    \right ), \f w  \right \rangle \\
& = \langle \nabla \f d^T \f q , \f w \rangle + \frac{\bar{b}}{2}  ( \nabla | \f d|^2, \f w )  = \langle \nabla \f d^T \f q , \f w \rangle = \langle \f T^E : \nabla \f w\rangle\, .
\end{align*}
The last but one equation holds since $\f w$ is a solenoidal function.
We thus see that the system is only changed by the new free energy potential~\eqref{FA}  as proposed by Leslie.
The model with these additional degrees of freedom thus fits into our framework.

\subsection{Simplified Oseen--Frank energy}
The Oseen--Frank energy~\eqref{oseen}  fits only  into our setting for a particular choice of the constants appearing. Since
\begin{align*}
 (\f d \cdot  (\curl \f d) )^2 + | \f d \times( \curl \f d) |^2 = | \f d |^2  | \curl \f d |^2
\end{align*}
and since in the classical Ericksen--Leslie model the norm of the  director is supposed to be one, it is convenient to consider the following reformulation of the Oseen--Frank energy for the case $k_2=k_3$:
\begin{align*}
F_{\{k_2=k_3\}}(\f d , \nabla \f d ) = k_1 ( \di \f d )^2 + k_2 | \curl \f d |^2 + \alpha( \tr(\nabla \f d^2)- (\di \f d)^2)\, .
\end{align*}

For $k_1,k_2 >0$, this energy fulfils the assumptions of Section~\ref{free}: A simple but tedious calculation shows that
\begin{align}\label{LambdaDef}
\begin{split}
|\nabla \f d|^2 &= \nabla \f d : \f \Lambda^0 : \nabla \f d
\quad\text{with } \f\Lambda^0_{ijkl} = \delta_{ik}\delta_{jl}
\\
(\di \f d)^2 & =
\nabla \f d : \f \Lambda^1 : \nabla \f d \quad \text{with } \f \Lambda^1_{ijkl} = \delta_{ij}\delta_{kl}
\\
\tr (\nabla \f d ^2) &= \nabla \f d : \f \Lambda^2 : \nabla \f d \quad \text{with } \f \Lambda^2_{ijkl} = \delta_{il}\delta_{jk}
\\
|\nabla \times \f d|^2 &= \nabla \f d : \f \Lambda^3 : \nabla \f d \quad \text{with }  \f \Lambda^3 = \f \Lambda^0 - \f \Lambda^2 \, ,
\end{split}
\end{align}
where $\f \Lambda^0$ is the identity mapping $\R^{3\times 3}$ into $\R^{3\times 3}$. It follows that
\begin{equation*}
F_{\{k_2=k_3\}}(\f d , \nabla \f d ) =
\frac{1}{2}\, \nabla \f d : \f \Lambda : \nabla \f d
\quad \text{with }
\f \Lambda = 2 k_2 \f \Lambda^0 + 2 (k_1- \alpha) \f \Lambda^1 - 2(k_2 -\alpha) \f \Lambda^2 \, .
\end{equation*}
For $\f a, \, \f b \in \R^3$, we find
\begin{equation*}
\f a\otimes \f b : \f \Lambda : \f a\otimes \f b
=
2 k_2 |\f a|^2 |\f b|^2 + 2 (k_1 - k_2) (\f a \cdot \f b)^2 \, ,
\end{equation*}
and the strong Legendre--Hadamard condition is fulfilled for any $\alpha \in \R$ and any $k_1 , k_2 > 0$.

\subsection{Scaled {Oseen--Frank} energy}
It does not seem to be possible to include the general Oseen--Frank energy in the presented setting, but to include its anisotropic character. We consider an energy, where the non-quadratic terms are scaled appropriately.
The energy is given by~\eqref{scaledO} with sufficiently small constants $k_3$ and $k_4$.

The associated tensor of fourth order  $\f \Lambda$ is given by
\begin{align*}
\f \Lambda = k_1 \f \Lambda^1 + \min\{k_2,k_3\} \left ( \f \Lambda^0 - \f \Lambda ^2 \right )\,,
\end{align*}
where $\f \Lambda^i$ for $i=0,1,2$ is defined in~\eqref{LambdaDef}.
Let $c_{\f \Lambda} $ be the associated coercivity constant of the estimate~\eqref{H2-Lambda}.
A careful calculation and estimate of the partial derivatives of the free energy potential~\eqref{scaledO}, shows that they fulfill the assumptions of Section~\ref{free}
and Section~\ref{sec:more} for $ s>\frac{1}{6}$, and $k_3+k_4$ sufficiently small.
The condition on $s$ follows from the property~\eqref{FSh} and the smallness condition from property~\eqref{Lambda2} and the estimate on $\f \Theta$.
 If in addition $ s \leq \frac{1}{4} $, it can be seen by roughly estimating the second partial derivative of $F$ with respect to $\f S$ that the constants $k_3$ and $k_4$ have to be chosen such that   $$53(k_3+k_4)\leq \frac{c_{\f \Lambda}}{c_{\f H^2}}\,.$$

\addcontentsline{toc}{section}{References}
\bibliographystyle{abbrv}
\small

\end{document}